\numberwithin{equation}{section}
\theoremstyle{plain}
\newtheorem{thm}{Theorem}[section]
\newtheorem{cor}{Corollary}[section]
\newtheorem{prop}{Proposition}[section]
\theoremstyle{definition}
\newtheorem{definition}{Definition}[section]
\newtheorem{remark}{Remark}[section]
\newtheorem{example}{Example}[section]
\newcommand{\qede}{\hfill $\diamond$}
\newcommand{\N}{\mathbb N}
\newcommand{\R}{\mathbb R}
\newcommand{\Z}{\mathbb Z}
\newcommand{\T}{\mathbb T}
\newcommand{\Q}{\mathbb Q}
\newcommand{\spn}{\mathrm{span}}
\newcommand{\conv}{\mathrm{conv}}
\newcommand{\dom}{\mathrm{dom}}
\newcommand{\core}{\mathrm{core}}
\begin{document}
\title{Convex analysis in groups and semigroups: a sampler} 

\dedicatory{This paper is dedicated to  R. Tyrell Rockafellar on the occasion of his eightieth  birthday}

\thanks{This work was funded in part by the Australian Research Council}

\subjclass[2010]{49J27, 46N10, 52A01}

\author{Jonathan M. Borwein \and Ohad Giladi}
\address{Centre for Computer-assisted Research Mathematics and its Applications (CARMA), School of Mathematical and Physical Sciences, University of Newcastle, Callaghan, NSW 2308, Australia}
\email{jonathan.borwein@newcastle.edu.au, ohad.giladi@newcastle.edu.au}

\maketitle

\begin{abstract}
We define convexity canonically in the setting of monoids. We show that many classical results from  convex analysis hold for functions defined on  such groups and semigroups, rather than only vector spaces. Some examples and counter-examples are also discussed.
\end{abstract}


\section*{Part I: Basic convex analysis}

\section{Introduction}

The notion of convexity is  classical \cite{Rock97}, and heavily used in diverse contexts \cite[Chapter 1]{BV10}. While normally considered in the concrete setting of vector spaces --- either $\R^d$ or infinite dimensional --- it has often been examined in  very general  axiomatic form, see \cite{BHT82} and \cite{Vel93}.
 In the vector space case, $x$ is said to be a \emph{convex combination} of $x_1,\dots,x_n$ if there exist $\alpha_1,\dots,\alpha_n\in (0,1)$ such that
\begin{align}\label{conv vec space}
x = \sum_{i=1}^n\alpha_i x_i,~~ \sum_{i=1}^n\alpha_i=1.
\end{align}
If we assume for a moment that $\alpha_i$ is of the form $\alpha_i = \frac{m_i}{\sum_{i=1}^nm_i}$ where $m_1,\dots,m_n\in \N = \{1,2,\dots\}$, then~\eqref{conv vec space} becomes
\begin{align}\label{conv add group}
mx = \sum_{i=1}^nm_ix_i,~~ m = \sum_{i=1}^nm_i.
\end{align}
In~\eqref{conv vec space} we must be able to define $\alpha x$ for $\alpha \in \R$ and $x\in X$. More generally,~\eqref{conv vec space} can be used whenever $X$ is a module. On the other hand, in~\eqref{conv add group} we use \emph{only} the additive structure of $X$, i.e., we may assume that $X$ is merely an additive semigroup. (See Section~\ref{sec prelim} for the exact definitions.) Using~\eqref{conv add group}, we show how one can build a \emph{canonical} theory of convexity for additive groups and semigroups.  We refer the reader to~\cite{Mur03, Vel93} for more information on abstract convexity in all its manifestations. Some aspects of convex analysis in a more abstract setting have also been studied in~\cite{Ham05, JLMS07, LMS04}. Note that in~\cite{LMS04} for example, it is only required that a function is convex over geodesic curves (in this case, in the Heisenberg group). Thus, the various notions of convexity do not always coincide. See also Remark 1 in~\cite{LMS04}.

In a similar fashion to~\eqref{conv add group}, one can  define convex functions on additive groups and semigroups (again, see Section~\ref{sec prelim}). It is then natural to ask whether one may obtain useful analogues of known results for convex functions. It turns out that under only minimal assumptions on the underlying monoid or group, it is possible to reconstruct many classical results from the theory of convex functions such as Hahn-Banach type theorems, Fenchel duality, certain constrained optimisation results, and more.
We dedicate Section~\ref{sec examples} to exhibiting  concrete examples of groups and their convex sets and convex hulls. It turns out that even in  simple examples, the structure of convex sets is subtle  and can differ significantly from the structure of convex sets in vector spaces.

The rest of the paper is dedicated to generalising  classical  results of the theory of convexity to  more general settings. While many of the results presented here hold when we assume that the underlying space is a module (see Section~\ref{sec conv funs}), for the sake of concreteness we formulate most of the results for groups and semigroups. 
In Section~\ref{sec interpolate} we discuss the interpolation of subadditive and convex function. In short, the question (say, in the convex case) is: given two functions $f$ and $g$ with $g\le f$ and $f$, and $-g$ are convex, can we find an affine function $a$ such that $g \le a \le f$. Such questions were studied in~\cite{MO53} and  generalised in~\cite{Kau66}. We show that interpolation is possible for convex functions on semigroups which are semidivisible (see Section~\ref{sec conv funs}).

Part II of this paper (Sections~\ref{sec operators} and~\ref{sec optimize}) is dedicated to the study of convex operators between (semi)groups. We define some well known and widely used notions, such as directional derivatives and conjugate functions in the groups setting. In Section~\ref{sec operators}, we show that some of the best known results, such as the the max formula, sandwich theorems and Fenchel type duality theorems extend to this general setting.
Finally, in Section~\ref{sec optimize} we briefly discuss optimisation over groups before making some concluding remarks in Section~\ref{sec conc}.

\section{Convex basics}\label{sec prelim}

We define convex sets and functions and examine some basic properties.

\subsection{Convexity in algebraic structures}

A \emph{semiring} is  a commutative semigroup under addition and a semigroup under multiplication. A (left) \emph{semimodule} over a semiring is a commutative monoid (i.e., semigroup), satisfying all axioms of a module over a ring except the existence of an additive inverse.

\begin{definition}[Convex set in semimodule]\label{def convex set}
Assume that $X$ is a semimodule over a semiring $R$, and $A\subseteq X$. Let $r_1,\dots,r_n\in R\setminus\{0\}$, and $x_1,\dots,x_n\in A$. Assume that there exists $x\in X$ satisfying 
\begin{align*}
rx=\sum_{i=1}^nr_ix_i, ~~ r= \sum_{i=1}^nr_i.
\end{align*}
If $x\in A$ for every choice of $n\in \N$, $r_1,\dots,r_n\in R\setminus \{0\}$ and $x_1,\dots,x_n\in A$, then $A$ is said to be convex.
\end{definition}
Herein we  always assume that $\N=\{1,2,\dots\}$, i.e., all \emph{positive} integers. If $R$ is a ring, not just a semiring, then we assume  it is equipped with a \emph{compatible} partial order, i.e., that we have $r+r_1\le r+r_2$ whenever $r_1\le r_2$ and $r\cdot r_1\le r\cdot r_2$ whenever $r_1\le r_2$ and $r\ge 0$, and in Definition~\ref{def convex set}, we take only elements that are strictly positive. In particular, if $R$ is a field with a compatible partial order, $R_+$ is the collection of all positive elements, and $r_1,\dots,r_n\in R_+\setminus\{0\}$, then we have 
\begin{align*}
\sum_{i=1}^nr_i = r ~~ \Longrightarrow ~~ \sum_{i = 1}^n\frac{r_i}{r} = 1, ~~ rx=\sum_{i=1}^nr_ix_i ~~ \Longrightarrow ~~ x=\sum_{i=1}^n\frac{r_i}{r}x_i,
\end{align*}
which gives the standard definition of convexity (e.g., over $\R$ or $\mathbb Q$). As in vector spaces, we can also define convex cones. 

\begin{definition}[Convex cone in semimodule] 
A set $A\subseteq X$ is said to be a convex cone if in Definition \ref{def convex set} the assumption $\sum_{i=1}^nr_i= r$ is not imposed.
\end{definition}

Every commutative group is a module over the $\Z$. Herein, we will focus on additive groups and semigroups. By a \emph{monoid} we mean an additive semigroup with a unit. As noted in~\cite{Ham05}, a monoid with a nontrivial idempotent element cannot be embedded in a group. Clearly every monoid is a semimodule over the semiring $\Z_{+}$. Thus, the elements in Definition \ref{def convex set} are positive integers, denoted $m_j$ instead of $r_j$. 

For a general commutative group, one cannot always solve the equation
\begin{align}\label{eq can divide}
\left(\sum_{i=1}^nm_i\right)x = \sum_{i=1}^nm_ix_i.
\end{align}
Yet, equation~\eqref{eq can divide} is very useful in some cases. Thus, we recall the following.
\begin{definition}[Divisible group]
An additive group $X$ is said to be divisible if for every $n\in \N$, $nX=X$. Alternatively, $X$ is divisible if for every $y\in X$ and for every $n\in \N$, there exists $x\in X$ such that $nx = y$.
\end{definition}

\begin{definition}[Semidivisible group]\label{semi div def}
An additive group is said to be $p$-semidivisible is there exists $p\in \N$ prime such that $pX=X$, and $X$ is said to be semidivisible if it is $p$-semidivisible for some prime $p$.
\end{definition}

We can similarly define \emph{divisible} and \emph{semidivisible monoids}, as well as \emph{divisible} and \emph{semidivisible semimodules}. In particular, all divisible submodules and divisible submonoids are convex cones. A notion which is stronger than the above two is the following.

\begin{definition}[Uniquely divisible group]
An additive group $X$ is said to be uniquely divisible if for every $n\in \N$ and for every $y\in X$, there exists a unique $x\in X$ such that satisfies $nx=y$. Alternatively, $X$ is said to be uniquely divisible if it is divisible and for every $n\in\N$, the map $x\mapsto nx$ is an injective map.
\end{definition}

Similarly, we can consider the following notion.

\begin{definition}[Uniquely divisible monoid]
A monoid $X$ is said to be uniquely divisible if it is divisible and for every $n\in \N$, the map $x\mapsto nx$ is an injective map.
\end{definition}

Note that in monoids, singletons are convex if and only if the monoid is uniquely semidivisible, since we want $\sum_{i=1}^nm_ix = \left(\sum_{i=1}^nm_i\right)x$ to be the same as $\left(\sum_{i=1}^nm_i\right)y$ if and only if $x=y$. Divisibility and semidivisibility are important  for the structure theory of infinite abelian groups. See for example~\cite{Fuc70, Rob96}. We also refer the reader to~\cite{KTW11, Law10} for some more recent examples relating to divisible groups.

\begin{remark}
A subgroup of a divisible group need not be divisible, or even semidivisible. As a simple example, take $X=\R$ and $\Z\subseteq X$. \qede
\end{remark}

\begin{remark}[Divisibility in abelian groups]\label{rmk divisible}
It is known that every abelian group is a subgroup of a divisible group. Moreover, the quotient of a divisible group is again divisible, e.g., $\R/\Z$ and $\Q/\Z$. Also, the torsion subgroup $T_G$ (of all elements of finite order) is divisible and the quotient $G/T_G$ is a $\Q$-vector space.  Finally, the divisible groups are exactly the \emph{injective} abelian groups.
\qede
\end{remark}

\begin{remark}
If $X$ is $p$-semidivisible, i.e., $pX=X$ then for every $l\in \N$ we have $p^lX = p^{l-1}(pX) = p^{l-1}X = \dots = pX=X$. \qede
\end{remark}

\begin{remark}
Assume  $X=nX$ for some $n\in \N$. Write $n = p_1^{m_1}\cdots p_l^{m_l}$. Then  $X = p_1^{m_1}\cdots p_l^{m_l}X = p_1\left(p_1^{m_1-1}\cdots p_l^{m_l}\right)X\subseteq p_1X \subseteq X$ and so $X= p_1X$. Thus, for us the assumption that $p$ is prime in Definition~\ref{semi div def} plays no significant r\^ ole. \qede
\end{remark}

As mentioned above, convexity has an entirely axiomatic approach. We refer the reader to~\cite{Vel93} for more information about this rich topic. We will present only the basic definitions and the return to the more concrete case of convexity in algebraic structures. 

\begin{definition}[Convexity]\label{def convexity}
A collection $\mathcal C$ of subsets of a set $X$ is said to be a convexity (also an \emph{alignment}), if it contains the empty set and is closed under intersections and directed unions.
\end{definition}
It is straightforward to check the convex sets defined by Definition~\ref{def convex set} form a convexity. Given the Definition~\ref{def convexity}, we can also define the convex hull.
\begin{definition}[Convex hull]\label{def conv hull}
If $A\subseteq X$, define
\[\conv(A) = \bigcap_{\substack{A\subseteq B\\ B \text { convex}}}B.\]
\end{definition}
The convex hull is a \emph{closure operator}, i.e., it satisfies the following: $1.~A\subseteq B \Longrightarrow \conv(A)\subseteq \conv(B)$; $2.~ A\subseteq \conv(A)$; $3.~ \conv(\conv(A))=\conv(A)$; $4.~ \conv(\emptyset)=\emptyset$; $5.$ Closure under intersections and directed unions.

In the case of monoids, we have the following concrete result.

\begin{prop}[Convex hull in monoid]\label{hull monoid}
If $X$ is a monoid and $A\subseteq X$, the convex hull of $A$ is given by
\begin{align}\label{for hull}
\conv(A) = \left\{x\in X~\left| ~mx = \sum_{i=1}^nm_ix_i, ~ x_i\in A, ~m_i\in \N, ~m = \sum_{i=1}^nm_i\right.\right\}.
\end{align}
\end{prop}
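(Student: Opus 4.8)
The plan is to denote by $C$ the set on the right-hand side of~\eqref{for hull} and to establish the two inclusions $\conv(A)\subseteq C$ and $C\subseteq \conv(A)$ separately. For the first inclusion, by Definition~\ref{def conv hull} it suffices to check that $C$ is itself a convex set containing $A$, since then $C$ is one of the sets over which the intersection defining $\conv(A)$ is taken, whence $\conv(A)\subseteq C$. That $A\subseteq C$ is immediate: for $x\in A$ take $n=1$ and $m_1=m=1$, so that $1\cdot x = 1\cdot x$. The substance therefore lies in verifying that $C$ is convex.

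To see this, I would take $y_1,\dots,y_k\in C$, weights $\ell_1,\dots,\ell_k\in\N$, and an element $y\in X$ with $\ell y = \sum_{j=1}^k \ell_j y_j$ where $\ell=\sum_{j=1}^k\ell_j$, and show $y\in C$. Each $y_j\in C$ furnishes a representation $m^{(j)}y_j = \sum_{i=1}^{n_j} m_i^{(j)} x_i^{(j)}$ with $x_i^{(j)}\in A$, $m_i^{(j)}\in\N$ and $m^{(j)}=\sum_i m_i^{(j)}$. The key step --- mimicking the clearing of denominators familiar from the vector-space proof, but using only the additive structure --- is to multiply the defining relation for $y$ through by $M=\prod_{j=1}^k m^{(j)}$. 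Using the identities $(a+b)x = ax+bx$ and $a(bx)=(ab)x$ for positive integers together with the distribution of integer scaling over finite sums, all of which hold in a commutative monoid, this yields
\begin{align*}
\ell M\, y = \sum_{j=1}^k \ell_j \Bigl(\prod_{j'\neq j} m^{(j')}\Bigr)\sum_{i=1}^{n_j} m_i^{(j)} x_i^{(j)},
\end{align*}
which exhibits $\ell M\,y$ as a combination of elements of $A$ with positive integer coefficients $\ell_j\bigl(\prod_{j'\neq j} m^{(j')}\bigr)m_i^{(j)}$.

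It then remains to check the bookkeeping on the total weight: summing these coefficients gives $\sum_j \ell_j\bigl(\prod_{j'\neq j} m^{(j')}\bigr)m^{(j)} = M\sum_j \ell_j = \ell M$, which matches the coefficient of $y$ on the left-hand side. Hence $y\in C$, so $C$ is convex and the first inclusion follows. For the reverse inclusion $C\subseteq\conv(A)$, I would fix an arbitrary convex set $B$ with $A\subseteq B$ and an arbitrary $x\in C$ with representation $mx=\sum_i m_i x_i$, $m=\sum_i m_i$; since the points $x_i$ lie in $A\subseteq B$, convexity of $B$ applied directly to this relation forces $x\in B$. As $B$ ranges over all convex supersets of $A$, this gives $x\in\conv(A)$, and hence $C\subseteq\conv(A)$. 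I expect the only genuine obstacle to be the convexity verification, specifically confirming that after multiplying by $M$ the weights remain positive integers and that they sum to exactly $\ell M$; the remaining steps are direct applications of the definitions.
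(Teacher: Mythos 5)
Your proposal is correct and follows exactly the same route as the paper's proof: the paper simply asserts that the right-hand side is convex, contains $A$, and is contained in every convex superset of $A$, while you supply the details of the convexity check by clearing denominators with $M=\prod_j m^{(j)}$ and verifying the weights sum to $\ell M$. That computation is valid in a commutative monoid, so nothing further is needed.
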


\begin{proof}
Clearly the set on the right side of~\eqref{for hull} is convex and contains $A$. If $A\subseteq B$ and $B$ is convex, then $B$ contains the set on the right side of~\eqref{for hull}. 
\end{proof}

A map $T:X_1\to X_2$ between two monoids is said to be \emph{additive} if $T(x_1+x_2) = Tx_1+Tx_2$ for all $x_1,x_2\in X_1$. It is well known that a linear image of a convex set in a vector space is again convex. We establish a similar fact for additive bijections between monoids.

\begin{prop}[Convexity under additive bijection]\label{prop add image}
Assume that $X_1,X_2$ are monoids and $T:X_1\to X_2$ is an additive bijection. If $A\subseteq X_1$ is convex, then $TA\subseteq X_2$ is convex.
\end{prop}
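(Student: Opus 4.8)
The plan is to verify the defining condition of Definition~\ref{def convex set} directly for $TA$, pulling each putative convex combination in $X_2$ back through $T$ to a convex combination in $X_1$ and then invoking the convexity of $A$. The one algebraic fact I will need first is that an additive map respects multiplication by positive integers: since $kz = z + \cdots + z$ ($k$ summands) for $k\in\N$, additivity of $T$ gives $T(kz) = Tz + \cdots + Tz = k\,Tz$ for every $z\in X_1$ and every $k\in\N$. This is a one-line induction on $k$.

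Next I would take an arbitrary putative convex combination in $TA$. So let $n\in\N$, let $m_1,\dots,m_n\in\N$ with $m = \sum_{i=1}^n m_i$, let $y_1,\dots,y_n\in TA$, and suppose some $y\in X_2$ satisfies $my = \sum_{i=1}^n m_i y_i$; the goal is to show $y\in TA$. Writing each $y_i = T x_i$ with $x_i\in A$, and using surjectivity of $T$ to pick $x\in X_1$ with $Tx = y$, I can rewrite the hypothesis by means of the integer-scaling identity as $T(mx) = m(Tx) = my = \sum_{i=1}^n m_i y_i = \sum_{i=1}^n m_i (Tx_i) = T\bigl(\sum_{i=1}^n m_i x_i\bigr)$.

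Injectivity of $T$ then converts this into the equation $mx = \sum_{i=1}^n m_i x_i$ in $X_1$, with $m = \sum_{i=1}^n m_i$. Since each $x_i\in A$ and $A$ is convex, Definition~\ref{def convex set} forces $x\in A$, whence $y = Tx\in TA$. As $n$, the weights $m_i$, the points $y_i$ and the element $y$ were arbitrary, $TA$ is convex.

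The proof presents no computational obstacle; the point worth flagging is why the full strength of bijectivity is used, since in vector spaces linear images of convex sets are convex with no such hypothesis. Both halves are genuinely needed here: surjectivity provides a preimage $x$ of the candidate point $y$ (without it $y$ might not lie in the range of $T$ at all), and injectivity is what licenses passing from $T(mx) = T\bigl(\sum_{i=1}^n m_i x_i\bigr)$ back to $mx = \sum_{i=1}^n m_i x_i$ --- a step that can fail for non-injective $T$, because the defining relation of convexity in a monoid is an existence statement about solutions of $mx = \sum_{i=1}^n m_i x_i$ rather than an explicit formula for $x$.
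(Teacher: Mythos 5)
Your proof is correct and follows essentially the same route as the paper's: use surjectivity to lift $y$ to a preimage $x$, rewrite $my=\sum m_iy_i$ as $T(mx)=T\bigl(\sum m_ix_i\bigr)$ via additivity, invoke injectivity to descend to $mx=\sum m_ix_i$ in $X_1$, and conclude from convexity of $A$. The only additions are the explicit induction for $T(kz)=kTz$ and the closing remark on why both halves of bijectivity are needed, both of which are accurate.
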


\begin{proof}
Assume that $m,m_1,\dots,m_n\in \N$ and $y_1,\dots,y_n\in TA$, $y\in X_2$ are such that $my = \sum_{i=1}^nm_iy_i$, $m=\sum_{i=1}^nm_i$. Since $T$ is onto, there exists $x\in X_1$ such that $Tx=y$. Since $y_1,\dots,y_n\in TA$, there exist $x_1,\dots,x_n\in A$ such that $y_1=Tx_1,\dots,y_n=Tx_n$. Hence, we have $T(mx) = mTx = my =\sum_{i=1}^nm_iy_i = \sum_{i=1}^nm_iTx_i = T\left(\sum_{i=1}^nm_ix_i\right)$. Since $T$ is injective, we have $mx = \sum_{i=1}^nm_ix_i$. Since $x_1,\dots,x_n\in A$ and $A$ is convex, it follows that $x\in X$. Thus, $y=Tx \in TA$ and $TA$ is convex. 
\end{proof}

\begin{remark}
If $X_1$ is divisible then in the proof of Proposition~\ref{prop add image} we always have $y$ such that $my = \sum_{i=1}^nm_iy_i$. If $T$ is additive and $A$ is convex, we must have $y\in TA$. Hence, in this case we need not assume that $T$ is a bijection. \qede
\end{remark}

For the inverse image, we have a more general result.

\begin{prop}[Convexity under inverse additive map]
Assume that $X_1$ and $X_2$ are monoids and $T:X_1\to X_2$ is additive. Assume that $A\subseteq X_2$ is convex. Then $T^{-1}A\subseteq X_1$ is convex.
\end{prop}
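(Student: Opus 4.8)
The plan is to unwind the definition of convexity (Definition~\ref{def convex set}, specialised to monoids over $\Z_+$) directly on the preimage $B = T^{-1}A$. Concretely, I would take an arbitrary instance of the defining relation for $B$: positive integers $m_1,\dots,m_n$ with sum $m = \sum_{i=1}^n m_i$, elements $x_1,\dots,x_n \in B$, and an element $x \in X_1$ satisfying $mx = \sum_{i=1}^n m_i x_i$. The goal is to conclude $x \in B$, that is, $Tx \in A$.

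The key mechanism is that additivity of $T$ lets it commute with multiplication by a positive integer. First I would record that for any $z \in X_1$ and any $k \in \N$, writing $kz = z + \cdots + z$ ($k$ summands) and applying additivity repeatedly gives $T(kz) = kTz$. Applying $T$ to both sides of $mx = \sum_{i=1}^n m_i x_i$ and using this observation, together with additivity across the sum, yields $m(Tx) = \sum_{i=1}^n m_i (Tx_i)$, with the same coefficient identity $m = \sum_{i=1}^n m_i$.

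Now the elements $Tx_1,\dots,Tx_n$ all lie in $A$, precisely because each $x_i \in B = T^{-1}A$. Thus $Tx \in X_2$ witnesses an instance of the convexity relation for the convex set $A$, and convexity of $A$ forces $Tx \in A$, i.e. $x \in T^{-1}A = B$. Since the chosen instance was arbitrary, $B$ is convex.

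I do not anticipate a genuine obstacle here; the argument is a clean transport of the defining relation through $T$. The one point requiring care --- and the reason no bijectivity hypothesis is needed, in contrast to Proposition~\ref{prop add image} --- is that we only ever push the relation \emph{forward} through $T$ onto the image side, rather than having to solve for a preimage witness. Consequently neither injectivity nor surjectivity of $T$ is invoked, which is exactly why the inverse-image statement is more general than its direct-image counterpart.
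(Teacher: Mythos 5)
Your argument is correct and is essentially identical to the paper's proof: both push the relation $mx=\sum_{i=1}^n m_i x_i$ forward through $T$ via additivity to obtain $m(Tx)=\sum_{i=1}^n m_i(Tx_i)$, and then invoke convexity of $A$ to conclude $Tx\in A$. Your closing remark correctly identifies why no bijectivity is needed here, in contrast to the direct-image case.
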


\begin{proof}
Assume that $x_1,\dots,x_n \in T^{-1}A =\{x ~| ~Tx \in A\}$, $m,m_1,\dots,m_n\in \N$ and $x\in X_1$ are such that $mx = \sum_{i=1}^nm_ix_i$, $m=\sum_{i=1}^nm_i$. Since $x_1,\dots,x_n\in T^{-1}A$, we have $Tx_1,\dots,Tx_n\in A$. Since $T$ is additive, we have $\sum_{i=1}^nm_iTx_i = T\left(\sum_{i=1}^nm_ix_i\right) = T(mx) = mTx$. Since $A$ is convex, we have $Tx\in A$. Thus, $x\in T^{-1}A$, which completes the proof. 
\end{proof}

As we shall see, studying convexity in such a general setting also brings about a better understanding of this notion in the standard setting of vector spaces.
One complaint about convexities is that there are too many of them and that in different settings one has to adjoin many additional axioms. This is one more motivation for the current study.

\subsection{Classes of functions}\label{sec conv funs}

Here we consider several classes of functions defined on semimodules,  particularly on monoids, classes which are well studied in the vector spaces setting. In order to define convex functions, we need to consider an \emph{ordered} semimodule, i.e., a semimodule with a partial order $\le$. Given a semimodule $X$ over a semiring $R$, we say that a partial order $\le$ is \emph{compatible} with the module operations, if $rx_1\le rx_2$, $x+x_1\le x+x_2$ for all $x\in X$, $r\in R$, whenever $x_1\le x_2$.

\begin{definition}[Convex function]\label{def convex function}
Let $X,Y$ be a semimodules over a semiring $R$. Assume that $Y$ is equipped with a compatible partial order $\le$. A function $f:X\to Y$ is said to be convex if for every $n\in \N$, every $r_1,\dots,r_n\in R\setminus\{0\}$ and every $x_1,\dots,x_n\in X$, 
\begin{align}\label{ineq conv fun}
rf(x) \le \sum_{i=1}^nr_if(x_i),
\end{align}
for every $x$ satisfying,
\[rx=\sum_{i=1}^nr_ix_i, ~~ r = \sum_{i=1}^nr_i.\]
$f:X\to Y$ is said to be concave if $-f$ is convex. Clearly the sum of two convex functions is convex.
\end{definition}

\begin{remark}
As in Definition~\ref{def convex set}, if we have modules over a ring rather that over a semiring, we assume  we have a partial order on the ring that is compatible with the ring operations, and then in Definition~\ref{def convex function}, we consider only strictly positive elements from the ring. \qede
\end{remark}

\begin{remark}
We often consider a maximal element in $Y$, $\infty$. Also, in the case where $Y$ is a module, not just a semimodule, we may also consider a minimal element $-\infty$. In order for~\eqref{ineq conv fun} to make sense, we  assume for a convex function that $\infty-\infty = 0\cdot \infty = \infty$. \qede
\end{remark}

\begin{definition}[Affine function]
Let $X,Y$ be semimodules over a semiring $R$. 
Then $f:X\to Y$ is said to be affine if for every $n\in \N$, every $r_1,\dots,r_n\in R\setminus \{0\}$ and every $x_1,\dots,x_n\in X$, 
\[rf(x) = \sum_{i=1}^nr_if(x_i),\]
whenever $x\in X$ satisfies,
\[rx=\sum_{i=1}^nr_ix_i, ~~ r = \sum_{i=1}^nr_i.\]
\end{definition}
Clearly every affine function is both convex and concave. For an affine function, we again cannot allow it to attain  $\pm \infty$. 

We can, however, consider the following notion.

\begin{definition}[Generalised affine function]
Assume that $X,Y$ are semimodules over a semiring $R$. Possibly $Y$ contains a maximal element $\infty$ or a minimal element $-\infty$. A function $f:X\to Y \cup\{\pm \infty\}$ is said to be generalized affine if it is both convex and concave. 
\end{definition}

Generalised affine functions are either affine or `very' infinite. 
\begin{prop}\label{prop infty}
Assume that $X$ and $Y$ are groups, and $a:X\to Y\cup\{\pm \infty\}$ is generalised affine. Then either $a$ is everywhere finite, or $a=+\infty$, or $a=-\infty$, or $a$ attains both values $+\infty$ and $-\infty$.
\end{prop}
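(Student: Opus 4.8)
The plan is to prove the four-way dichotomy by isolating the single obstruction and then appealing to symmetry. Suppose $a$ does not attain both $+\infty$ and $-\infty$ (otherwise we are in the last alternative); I must then show that $a$ is everywhere finite, or $a\equiv+\infty$, or $a\equiv-\infty$. At least one of the two infinite values is never attained, so after possibly replacing $a$ by $-a$ — which is again generalised affine, since passing to $-a$ interchanges convexity and concavity — I may assume that $a$ never takes the value $-\infty$, i.e.\ $a\colon X\to Y\cup\{+\infty\}$. It therefore suffices to show that, under this assumption, $a$ is either everywhere finite or identically $+\infty$.

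The key observation is an identity available in \emph{every} group, with no appeal to divisibility: for any $p,q\in X$ one has
\[ 2q = p + (2q - p), \qquad 2 = 1 + 1. \]
Taking $n=2$, $m_1=m_2=1$ and $m=2$, this exhibits $x=q$ as a legitimate convex combination of $x_1=p$ and $x_2=2q-p$ in the sense of Definition~\ref{def convex function}. In vector-space language it says that $q$ is the midpoint of $p$ and its reflection $2q-p$; the point is that the reflection $2q-p$ is always a bona fide element of the group $X$, so the comparison is licit even though $q$ need not be a genuine midpoint of two other prescribed points.

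Now argue by contradiction: suppose $a$ attains $+\infty$ at some $p$ and a finite value at some $q\in X$. Since $a$ is concave, $-a$ is convex, and applying the convexity inequality for $-a$ to the combination above gives
\[ 2(-a)(q) \le (-a)(p) + (-a)(2q - p). \]
Here $(-a)(p)=-\infty$, while $(-a)(2q-p)=-a(2q-p)\in Y\cup\{-\infty\}$ because $a$ never takes the value $-\infty$; in particular this summand is not $+\infty$. Hence the right-hand side is $-\infty + t$ with $t\in Y\cup\{-\infty\}$, which equals $-\infty$. The left-hand side $-2a(q)$ is a genuine element of the group $Y$, and no element of $Y$ satisfies $t\le-\infty$. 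This contradiction shows that, when $a$ never attains $-\infty$, it cannot simultaneously attain $+\infty$ and a finite value, so $a$ is everywhere finite or $a\equiv+\infty$; combined with the reduction of the first paragraph this yields the four stated alternatives.

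The only delicate point — the one I would treat most carefully — is the arithmetic with $\pm\infty$. The defining convention for convex functions resolves the ambiguous sum $-\infty+(+\infty)$, but the whole argument is arranged so that this ambiguous case never arises: the hypothesis that $a$ omits the value $-\infty$ is exactly what forces the summand $-a(2q-p)$ to avoid $+\infty$, and then the right-hand side is unambiguously $-\infty$ by ordinary extended-real arithmetic. I would also stress that only the concavity half of the generalised-affine hypothesis is used, and that no divisibility of $X$ is invoked, since the reflection identity $2q=p+(2q-p)$ is the universally available substitute for the midpoint that drives the estimate.
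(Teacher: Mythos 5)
Your proof is correct and follows essentially the same route as the paper's: both hinge on the reflection identity $2q = p + (2q-p)$, valid in any group without divisibility, together with the concavity (equivalently, convexity of $-a$) inequality $2a(q) \ge a(p) + a(2q-p)$ to force the third value to be $-\infty$. The only difference is packaging — you argue by contraposition after a symmetry reduction to the case where $-\infty$ is omitted, while the paper argues directly that a function attaining $+\infty$ and a finite value must also attain $-\infty$ — and your explicit handling of the extended arithmetic is, if anything, slightly more careful.
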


\begin{proof}
Assume that $a$ is not everywhere finite, and that it is not identically $+\infty$ or $-\infty$. Assume for example that there exist $x_1, x_2\in X$ such that $a(x_1)>\alpha$ for all $\alpha \in \R$ and $a(x_2)$ is finite. We have $2x_2 = \big(x_2+(x_1-x_2)\big)+\big(x_2-(x_1-x_2)\big) = x_1+\big(2x_2-x_1)$, and so since $a$ is concave we have $2a(x_2) \ge a(x_1)+a(2x_2-x_1) > \alpha + a(2x_2-x_1)$. Therefore we must have $a(2x_2-x_1) = -\infty$. If we assume $a(x_1)=-\infty$ rather than $+\infty$, the proof is similar.
\end{proof}

\begin{definition}[Subadditive function]\label{def subadd}
Assume that $X,Y$ are semimodules over a semiring $R$, and assume that $Y$ is equipped with a partial order $\le$. A function $f:X\to Y \cup \{\pm \infty\}$ is said to be subadditive if for every $x,y \in X$, 
\[f(x+y) \le f(x)+f(y).\]
\end{definition}

The function $x\mapsto \sqrt{x}$ is subadditive on $[0,+\infty)$ but not convex.
As we will mostly be concerned with groups and monoids, we  now focus on  functions with subadditive properties over $\N$.

\begin{definition}[$\N$-sublinear functions]\label{def sublin}
Assume that $X,Y$ are semimodules over a semiring $R$, and assume that $Y$ is equipped with a partial order $\le$. A function $f:X\to Y\cup\{\pm \infty\}$ is said to be $\N$-sublinear if it is subadditive and in addition it is positively homogeneous, i.e., $f(mx)=mf(x)$ for every $x\in X$ and every $m\in \N\cup\{0\}$.
\end{definition}

\begin{definition}[Generalised $\N$-linear function]\label{def general lin}
Assume that $X,Y$ are as in Definition~\ref{def sublin}. A function $f:X\to Y\cup\{\pm \infty\}$ is said to be generalised $\N$-linear if both $f$ and $-f$ are $\N$-sublinear.
\end{definition}

If $f$ is a generalised $\N$-linear function and $f$ is finite, then for every choice of positive integers $m_1,\dots,m_n\in \N$, we have $f\left(\sum_{i=1}^nm_ix_i\right) = \sum_{i=1}^nm_if(x_i)$. The functions that satisfy this property are exactly the additive functions on semimodules over $\Z_+$.

If $f$ is $\N$-sublinear and $mx = \sum_{i=1}^nm_ix_i$, where $m=\sum_{i=1}^nm_i$, then 
\begin{align*}
mf(x) = f(mx) = f\left(\sum_{i=1}^nm_ix_i\right) \le \sum_{i=1}^nf(m_ix_i) =\sum_{i=1}^nm_if(x_i).
\end{align*}
In particular, every $\N$-sublinear function on a monoid is convex. Also we have the following.

\begin{prop}\label{prop still conv}
Assume that $X$ is a monoid, $(Y,\le)$ a monoid with a compatible lattice order $\le$, and $f_1,\dots,f_k:X\to Y\cup\{\pm \infty\}$ are convex ($\N$-sublinear, subadditive). Then the function $\max\{f_1,\dots,f_k\}$ is also convex ($\N$-sublinear, subadditive).
\end{prop}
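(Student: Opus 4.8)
The plan is to set $f:=\max\{f_1,\dots,f_k\}$, interpreting the value $f(x)=\bigvee_{j=1}^k f_j(x)$ as the lattice join in $(Y\cup\{\pm\infty\},\le)$, which exists precisely because $\le$ is a lattice order. In all three cases I would argue by the same two-step scheme: first verify the relevant defining inequality for each single index $j$, and then pass to the join on the left-hand side, using that $f(x)$ is by definition the \emph{least} upper bound of $f_1(x),\dots,f_k(x)$ together with the monotonicity encoded in the compatibility of $\le$.

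The subadditive case needs nothing more than this. Fixing $x,y\in X$, subadditivity of each $f_j$ gives $f_j(x+y)\le f_j(x)+f_j(y)$, and since $f_j(x)\le f(x)$ and $f_j(y)\le f(y)$, monotonicity of addition yields $f_j(x+y)\le f(x)+f(y)$ for every $j$. Hence $f(x)+f(y)$ is an upper bound of $\{f_j(x+y)\}_{j}$, and as $f(x+y)$ is the least upper bound we get $f(x+y)\le f(x)+f(y)$.

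For the convex and $\N$-sublinear cases the single-index step produces inequalities whose left-hand sides are $m f_j(x)$ rather than $f_j$ evaluated at a sum. In the convex case, if $mx=\sum_{i=1}^n m_ix_i$ with $m=\sum_{i=1}^n m_i$, convexity of $f_j$ and monotonicity give $m f_j(x)\le \sum_{i=1}^n m_i f_j(x_i)\le \sum_{i=1}^n m_i f(x_i)=:S$ for every $j$; in the $\N$-sublinear case positive homogeneity of each $f_j$ gives $f_j(mx)=m f_j(x)$. Both cases therefore hinge on the distributive identity
\[
m\Big(\bigvee_{j=1}^k a_j\Big)=\bigvee_{j=1}^k (m a_j),\qquad m\in\N,\ a_1,\dots,a_k\in Y .
\]
Granting it, convexity follows since $m f(x)=\bigvee_j m f_j(x)\le S$, and, for homogeneity, $f(mx)=\bigvee_j f_j(mx)=\bigvee_j m f_j(x)=m f(x)$; together with the subadditivity already established this shows $f$ is $\N$-sublinear.

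I expect this distributivity identity to be the main obstacle, because the compatibility hypothesis supplies directly only the monotone inequality $\bigvee_j(m a_j)\le m\big(\bigvee_j a_j\big)$, which is the reverse of what is needed. When $\le$ is a \emph{total} order the identity is trivial, since then $\bigvee_j f_j(x)$ is attained by a single $f_{j_0(x)}$ and one may run the argument with that index directly, bypassing the identity altogether; the same holds for coordinatewise orders on products of chains. In a lattice-ordered group it is standard, translations acting there as lattice automorphisms so that $c+(a\vee b)=(c+a)\vee(c+b)$ and, after an induction on $m$, $m(a\vee b)=ma\vee mb$. For a bare lattice-ordered monoid this last passage can fail, so the honest course is to isolate this identity as the exact compatibility hypothesis under which the proposition is being asserted.
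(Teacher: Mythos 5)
Your argument is essentially the paper's: the authors likewise write $m\cdot\max_{j}f_j(x)=\max_{j}\{mf_j(x)\}$, bound this by $\max_{j}\bigl\{\sum_{i}m_if_j(x_i)\bigr\}$, and then pass to $\sum_{i}m_i\max_{j}f_j(x_i)$ using the least-upper-bound property together with compatibility (the step they label ($*$)); the sublinear and subadditive cases are declared easy and omitted. The one point where you are more careful than the paper is precisely the distributive identity $m\bigl(\bigvee_j a_j\bigr)=\bigvee_j(ma_j)$: the paper asserts this as the first equality of its display without comment, whereas compatibility alone yields only $\bigvee_j(ma_j)\le m\bigl(\bigvee_j a_j\bigr)$, the reverse of what the proof needs. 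Your observation that the identity does hold when $Y$ is a lattice-ordered group (translations act as lattice automorphisms, and then $m(a\vee b)=ma\vee mb$ follows by induction from the standard $\ell$-group inequality $a+b\le 2a\vee 2b$) or when the order is total, but is an additional axiom for a bare lattice-ordered monoid, is a legitimate refinement of the hypotheses rather than a defect of your proof; indeed the paper's own justification of ($*$) speaks of compatibility with ``the group operations on $Y$'' even though $Y$ is only assumed to be a monoid, which suggests the authors had the group case in mind.
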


\begin{proof}
If $f_1,\dots,f_k$ are convex and $m,m_1,\dots, m_n\in \N$, $x,x_1,\dots,x_n\in X$ are such that $mx = \sum_{i=1}^nm_ix_i$, $m=\sum_{i=1}^nm_i$, then
\begin{eqnarray*}
m\cdot \max_{1\le j \le k}\{f_j(x)\} & = & \max_{1 \le j \le k}\big\{mf_j(x)\big\}
\\ & \le & \max_{1 \le j \le k}\left\{\sum_{i=1}^nm_if_j(x_i)\right\}
\\ & \stackrel{(*)}{\le} & \sum_{i=1}^nm_i\cdot \max_{1 \le j \le k}\{f_j(x_i)\}.
\end{eqnarray*}
In ($*$) we used the fact that $\le$ is a lattice order, compatible with the group operations on $Y$. The case of sublinear or subadditive functions is easy. We omit the proof. 
\end{proof}

\begin{prop}\label{prop p sufficient}
Assume that $X,Y$ are monoids. Then it suffices in Definition~\ref{def convex function} that $r=p^l$ for a fixed prime $p$ and all $l\in \N$.
\end{prop}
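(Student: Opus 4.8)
The plan is to use a padding argument that converts an arbitrary convex combination into one whose total weight is a power of $p$, so that the restricted hypothesis applies. Concretely, suppose we already know that the defining inequality of Definition~\ref{def convex function} holds for every configuration whose total weight equals $p^l$ for some $l\in\N$, and suppose we are given an arbitrary relation $mx=\sum_{i=1}^n m_ix_i$ with $m=\sum_{i=1}^n m_i$. Since $p\ge 2$, the powers $p^l$ are unbounded, so we may fix $l$ with $p^l\ge m$ and set $s:=p^l-m\ge 0$.

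First I would form the padded relation. Adding $sx$ to both sides of $mx=\sum_{i=1}^n m_ix_i$ and using $mx+sx=(m+s)x=p^lx$ gives $p^lx=\sum_{i=1}^n m_ix_i+sx$, whose right-hand side has total weight $\sum_{i=1}^n m_i+s=m+s=p^l$. If $s=0$ the point $x$ is simply omitted; otherwise the $n+1$ points $x_1,\dots,x_n,x$ carry the positive integer weights $m_1,\dots,m_n,s$. This is a legitimate configuration with total weight $p^l$, so the hypothesis yields $p^lf(x)\le\sum_{i=1}^n m_if(x_i)+sf(x)$.

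Next I would extract the desired inequality. Writing $p^lf(x)=(m+s)f(x)=mf(x)+sf(x)$, the previous display becomes $mf(x)+sf(x)\le\sum_{i=1}^n m_if(x_i)+sf(x)$. The hard part will be the final cancellation of the common summand $sf(x)$: this is immediate when $Y$ is a group (or, more generally, a cancellative ordered monoid), since there $a+c\le b+c$ forces $a\le b$, but it is exactly the place where the bare monoid axioms and the one-sided compatibility of $\le$ do not suffice. I would therefore perform the cancellation directly in the group case, and in the general monoid case either invoke cancellativity or pass to an enveloping group, concluding $mf(x)\le\sum_{i=1}^n m_if(x_i)$, which is the defining inequality for the original total weight $m$.

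Finally I would dispose of the bookkeeping for infinite values using the stated convention $\infty-\infty=0\cdot\infty=\infty$: if $f(x)=\pm\infty$ the claim is trivial or follows from the convention, so the cancellation is only genuinely needed when $f(x)$ is finite. I would also remark that primality of $p$ plays no role here---any fixed base $\ge 2$ would do---since all that is used is that $\{p^l\}_{l\in\N}$ is cofinal in $(\N,\le)$; the restriction to powers of a single prime is merely what makes the statement a genuine economy over checking every total weight.
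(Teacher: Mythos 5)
Your proof is correct and is essentially the paper's own argument: pad the relation with $p^l-m$ copies of $x$ to reach total weight $p^l$, apply the restricted hypothesis, and cancel the common summand. Your explicit caveat that the final cancellation of $sf(x)$ requires $Y$ to be a group (or at least a cancellative ordered monoid) is a point the paper's one-line proof silently elides, so it is a welcome refinement rather than a genuine divergence.
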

\begin{proof}
Indeed, if $r\neq p^l$, then there exists $l\in \N$ such that $r<p^l$. Thus,
\begin{align*}
\left(p^l-r\right)x + \sum_{i=1}^nr_ix_i =p^lx. 
\end{align*}
By the convexity property,
\begin{align*}
p^lf(x) \le \left(p^l-r\right)f(x) + \sum_{i=1}^nr_if(x_i),
\end{align*}
which gives 
\[rf(x) \le \sum_{i=1}^nr_if(x_i),\] 
as required.
\end{proof}

Proposition~\ref{prop p sufficient} implies the following.
\begin{prop}\label{prop p sufficient sublin}
Assume that $X,Y$ are monoids. Assume that $f:X\to Y$ is subadditive and there exists $p\in\N$ such that $f(px)=pf(x)$ for every $x\in X$, then $f$ is convex. If $Y$ is a group, then $f$ is in fact $\N$-sublinear.
\end{prop}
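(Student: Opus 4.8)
The plan is to handle the two assertions in turn: convexity will drop out of Proposition~\ref{prop p sufficient}, whereas the upgrade to $\N$-sublinearity requires cancellation and is exactly where the group hypothesis on $Y$ is needed. I would first record two elementary consequences of the hypotheses. Iterating $f(px)=pf(x)$ gives $f(p^lx)=p^lf(x)$ for every $l\in\N$, by induction on $l$ via $f(p^lx)=pf(p^{l-1}x)$. Moreover, applying subadditivity repeatedly yields $f(mx)\le mf(x)$ for every $m\in\N$.

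To prove that $f$ is convex, I would invoke Proposition~\ref{prop p sufficient}, which reduces the verification of Definition~\ref{def convex function} to the case $r=p^l$. So suppose $m_1,\dots,m_n\in\N$ and $x,x_1,\dots,x_n\in X$ satisfy $p^lx=\sum_{i=1}^nm_ix_i$ with $\sum_{i=1}^nm_i=p^l$. Then
\[ p^lf(x)=f(p^lx)=f\left(\sum_{i=1}^nm_ix_i\right)\le\sum_{i=1}^nf(m_ix_i)\le\sum_{i=1}^nm_if(x_i), \]
using in turn the iterated homogeneity, subadditivity across the $n$ summands, and the bound $f(m_ix_i)\le m_if(x_i)$. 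This is precisely the convexity inequality for $r=p^l$, so $f$ is convex.

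For the second assertion, subadditivity is assumed, so only positive homogeneity $f(mx)=mf(x)$ for $m\in\N\cup\{0\}$ remains. Fix $x\in X$ and $m\ge1$ and choose $l$ with $p^l>m$. Splitting $p^lx=(mx)+((p^l-m)x)$ and combining subadditivity with the two observations above gives
\[ p^lf(x)=f(p^lx)\le f(mx)+f\left((p^l-m)x\right)\le f(mx)+(p^l-m)f(x). \]
Here the group hypothesis enters: since $Y$ is a group I may add $-(p^l-m)f(x)$ to both sides, and compatibility of $\le$ with addition preserves the inequality, leaving $mf(x)\le f(mx)$. Together with $f(mx)\le mf(x)$ this gives equality. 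The case $m=0$ then follows by applying the freshly obtained inequality $mf(x)\le f(mx)$ at the neutral element $x=0$: since $m\cdot0=0$ one gets $2f(0)\le f(0)$, hence $f(0)\le0$, while subadditivity gives $0\le f(0)$, so $f(0)=0=0\cdot f(x)$.

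The only genuine obstacle is the reverse inequality $mf(x)\le f(mx)$, whose proof hinges on the cancellation step; this fails in a bare monoid, which is exactly why the group structure of $Y$ is assumed for the sublinearity conclusion. I would also flag that $p$ must be taken at least $2$ for the reduction of Proposition~\ref{prop p sufficient} to be available: when $p=1$ the hypothesis $f(px)=pf(x)$ is vacuous, and $x\mapsto\sqrt{x}$ on $[0,\infty)$ is then subadditive yet not convex.
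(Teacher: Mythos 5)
Your proof is correct and follows essentially the same route as the paper's: convexity is reduced to $r=p^l$ via Proposition~\ref{prop p sufficient} and obtained from iterated homogeneity plus subadditivity, and sublinearity comes from completing $m$ to a power of $p$, sandwiching $p^lf(x)\le f(mx)+(p^l-m)f(x)\le p^lf(x)$, and cancelling in the group $Y$. Your additional handling of the $m=0$ case and the observation that the reduction requires $p\ge 2$ are sensible refinements of the same argument rather than a different approach.
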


\begin{proof}
By Proposition~\ref{prop p sufficient}, it is enough to assume in Definition~\ref{def convex function} that $r=p^l$, $l\in \N$. Assume then that $p^lx = \sum_{i=1}^nm_ix_i$. We have,
\begin{align*}
p^lf(x) \stackrel{(*)}{=} f(p^lx) = f\left(\sum_{i=1}^nm_ix_i\right) \stackrel{(**)}{\le} \sum_{i=1}^nm_if(x_i),
\end{align*}
where in ($*$) we used the homogeneity assumption on $f$, and in ($**$) we used the subadditivity of $f$. To prove the second assertion, let $m\in \N$. Then there exist $m',l\in \N$ such that $m+m' = p^l$. Thus, we have
\begin{align*}
(m+m')f(x) = f\big((m+m')x\big) \le f(mx)+m'f(x) \le (m+m')f(x).
\end{align*}
Thus, we have 
\begin{align*}
(m+m')f(x) = f(mx)+m'f(x),
\end{align*}
and since $Y$ is a group, this implies that $f(mx) = mf(x)$ for all $m\in \N$ and all $x\in X$. This complete the proof.
\end{proof}

\subsection{Properties of convex functions}

It is well known  that a convex function on a (semi)normed vector space is continuous at $x_0$ if and only if $f$ is bounded from above in a neighbourhood of $x_0$. If the space is normed, we derive a Lipschitz condition. See~\cite{BV10, Zal02}. We establish a similar fact for convex functions on topological monoids into $[-\infty,\infty]$. For a set $B\subseteq X$ in an additive group and $m\in \N$ define $\frac 1 m B = \big\{x ~ \big| ~ mx \in B\big\}$. It is straightforward to show that if $B$ is convex, $\frac 1 m B$ is convex for all $m\in \N$. Also, a set $B\subseteq X$ is said to be \emph{symmetric} if $-B=B$. Again, if $B$ is symmetric, then $\frac 1 mB$ is symmetric. We have the following.

\begin{prop}\label{lem cont}
Let $X$ be an additive group, $f:X\to [-\infty,\infty]$ a convex function, and assume that there is a symmetric $B\subseteq X$ and $M\in \R$ such that $f(x) \le f(x_0)+M$ for all $x\in x_0+B$. Then for every $y\in \frac 1 m B$, we have $|f(x_0+y)-f(x_0)| \le \frac M m$.
\end{prop}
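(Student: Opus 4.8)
The plan is to mimic the classical normed-space argument, producing the upper and lower bounds on $f(x_0+y)-f(x_0)$ separately, each via a single application of the convexity inequality~\eqref{ineq conv fun} to a carefully chosen integer identity of the form $rx=\sum_i r_ix_i$ with $r=\sum_i r_i$. First I would fix $y\in\frac 1 m B$, so that $my\in B$; since $B$ is symmetric we also have $-my\in B$, whence both $x_0+my$ and $x_0-my$ lie in $x_0+B$, giving $f(x_0+my)\le f(x_0)+M$ and $f(x_0-my)\le f(x_0)+M$. These two a priori estimates are the only way the hypothesis enters.

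For the upper bound I would use the identity
\[
m(x_0+y)=1\cdot(x_0+my)+(m-1)\cdot x_0,
\]
which is valid in any additive group and has total weight $1+(m-1)=m$. Applying Definition~\ref{def convex function} with $r=m$, $r_1=1$, $r_2=m-1$ at the point $x=x_0+y$ yields $mf(x_0+y)\le f(x_0+my)+(m-1)f(x_0)\le mf(x_0)+M$, whence $f(x_0+y)\le f(x_0)+\frac M m$ after dividing by $m$. When $m=1$ the weight $m-1$ vanishes and is not admissible, so there I would instead read off $f(x_0+y)\le f(x_0)+M$ directly from $x_0+y\in x_0+B$.

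For the lower bound I would use
\[
(m+1)x_0=m\cdot(x_0+y)+1\cdot(x_0-my),
\]
again a group identity, now of total weight $m+1$. Convexity at $x=x_0$ gives $(m+1)f(x_0)\le mf(x_0+y)+f(x_0-my)\le mf(x_0+y)+f(x_0)+M$; cancelling one copy of $f(x_0)$ and dividing by $m$ produces $f(x_0)-f(x_0+y)\le\frac M m$. Combining the two estimates gives $|f(x_0+y)-f(x_0)|\le\frac M m$.

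I expect the only genuine care to be the bookkeeping with the value $+\infty$: the steps ``cancel $f(x_0)$'' and ``divide by $m$'' are unproblematic in $\R$, and under the conventions of the preceding remark they remain valid whenever $f(x_0)$ is finite; the interesting content of the statement is in any case the finite case, and when $f(x_0)=+\infty$ the bounds degenerate and can be disposed of separately. The substantive—though still elementary—observation is that the two real convex combinations $x_0+y=\frac 1 m(x_0+my)+\frac{m-1}{m}x_0$ and $x_0=\frac{m}{m+1}(x_0+y)+\frac{1}{m+1}(x_0-my)$ clear their denominators into the integer identities above, so that no division inside the group is ever required and the argument transfers verbatim to an arbitrary additive group.
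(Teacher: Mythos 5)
Your proof is correct and follows essentially the same route as the paper's: the upper bound via the identity $m(x_0+y)=(x_0+my)+(m-1)x_0$ is literally the paper's argument, and your lower bound via $(m+1)x_0=m(x_0+y)+(x_0-my)$ is a one-step variant of the paper's, which instead uses $2x_0=(x_0+y)+(x_0-y)$ together with the upper bound applied to $-y\in\frac 1m B$. Your explicit handling of the $m=1$ case (where the weight $m-1$ is not in $\N$) is a small point the paper glosses over.
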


\begin{proof}
First, note that if $u\in B$ then $-u\in B$ and by convexity we have $2f(x_0) \le f(x_0+u) + f(x_0-u) \le f(x_0+u) +f(x_0)+M$ and so $f(x_0+u) \ge f(x_0)-M$. If $f(x_0)=-\infty$ then $f=-\infty$ on $x_0+B$. Assume then that $f(x_0)>-\infty$. Let $y\in \frac 1 m B$. Then there exists $u\in B$ such that $my =u$. Thus, we have $m(x_0+y) = (x_0+u)+(m-1)x_0$ and then using convexity of $f$ gives $mf(x_0+y) \le f(x_0+u)+(m-1)f(x_0) \le M$. This gives $f(x_0+y)-f(x_0) \le \frac 1 m\big(f(x_0+u)-f(x_0)\big) \le \frac M m$. Also, by convexity, we have $f(x_0) - f(x_0+y) \le f(x_0-y)-f(x_0) \le \frac{M}{m}$, which completes the proof.
\end{proof}
In a topological group the group operations are continuous, and we obtain:

\begin{cor}[Continuity]
Assume that $X$ is a topological group and $f:X\to [-\infty,\infty]$ is convex. Then $f$ is bounded from above in around $x_0$ if and only if $f$ is continuous at $x_0$.
\end{cor}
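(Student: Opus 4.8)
The plan is to derive this as an essentially immediate consequence of Proposition~\ref{lem cont}; all of the analytic content is already contained there, and what remains is to translate its set-theoretic conclusion into the topology of $X$. The substantive implication is ``local boundedness above forces continuity''; the converse is a routine unwinding of the definition of continuity into $[-\infty,\infty]$. Indeed, if $f$ is continuous at $x_0$ and $f(x_0)$ is finite, then for any finite $c>f(x_0)$ the set $f^{-1}\big([-\infty,c)\big)$ is, by continuity, a neighbourhood of $x_0$ on which $f\le c$; if $f(x_0)=-\infty$ one argues identically with $c=0$. So I concentrate on the forward direction.

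For the forward direction, suppose $f$ is bounded above on some neighbourhood of $x_0$, say $f\le C$ on $x_0+W$ where $W$ is a neighbourhood of the identity $0$. Since $X$ is a topological group, $B:=W\cap(-W)$ is a \emph{symmetric} neighbourhood of $0$ contained in $W$. If $f(x_0)=-\infty$, then by the opening lines of the proof of Proposition~\ref{lem cont} we have $f\equiv-\infty$ on $x_0+B$, so $f$ is trivially continuous at $x_0$. Otherwise $f(x_0)$ is finite (it cannot be $+\infty$, as $x_0\in x_0+B$), and with $M:=C-f(x_0)$ we obtain $f(x)\le f(x_0)+M$ for every $x\in x_0+B$ --- precisely the hypothesis of Proposition~\ref{lem cont}.

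It then remains to convert the conclusion of Proposition~\ref{lem cont} into an $\eps$--$\delta$ statement. Fix $\eps>0$ and choose $m\in\N$ with $M/m<\eps$. The crucial point --- and the one place where the topological group structure is genuinely used --- is that $\frac 1 m B=\{x\mid mx\in B\}$ is itself a neighbourhood of $0$: the map $x\mapsto mx$ is continuous, being an $m$-fold iterate of the continuous addition, and it sends $0$ to $0$, so $\frac 1 m B$ is the preimage of the neighbourhood $B$ under a continuous map and contains $0$. Proposition~\ref{lem cont} now yields $|f(x_0+y)-f(x_0)|\le M/m<\eps$ for every $y\in\frac 1 m B$, that is, $|f(x)-f(x_0)|<\eps$ for all $x$ in the neighbourhood $x_0+\frac 1 m B$ of $x_0$. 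As $\eps>0$ was arbitrary, $f$ is continuous at $x_0$. The only real obstacle is this verification that $\frac 1 m B$ is a neighbourhood of $0$; the rest is bookkeeping around the $\pm\infty$ values, with Proposition~\ref{lem cont} supplying the quantitative estimate.
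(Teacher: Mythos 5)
Your argument is exactly the one the paper intends: the corollary is stated with no proof beyond the remark that the group operations are continuous, and the substance is precisely your observation that $\frac 1 m B=\{x\mid mx\in B\}$ is a neighbourhood of $0$ (preimage of $B$ under the continuous map $x\mapsto mx$), after which Proposition~\ref{lem cont} gives $|f(x_0+y)-f(x_0)|\le M/m$ on $x_0+\frac1m B$. The symmetrisation $B=W\cap(-W)$ and the converse via $f^{-1}\bigl([-\infty,c)\bigr)$ are the right bookkeeping.

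One step is justified incorrectly, though the conclusion survives. In the case $f(x_0)=-\infty$ you invoke ``the opening lines of the proof of Proposition~\ref{lem cont}'' to get $f\equiv-\infty$ on $x_0+B$. Those lines rest on the hypothesis $f\le f(x_0)+M$ on $x_0+B$, which for $f(x_0)=-\infty$ \emph{trivially} forces $f\equiv-\infty$ there; but at that point of your argument you only have $f\le C$ with $C$ finite, so the citation does not apply, and in fact $f\equiv-\infty$ on all of $x_0+B$ need not follow (convexity only lets you reach $x_0+u$ from $x_0$ when some multiple $x_0+nu$ stays in the region where $f<\infty$). The repair is one line and uses the same topological-group fact you already rely on: for $u\in\frac12B$ one has $2(x_0+u)=(x_0+2u)+x_0$, so $2f(x_0+u)\le f(x_0+2u)+f(x_0)=-\infty$ since $f(x_0+2u)\le C<\infty$; hence $f\equiv-\infty$ on the neighbourhood $x_0+\frac12B$, which is all that continuity at $x_0$ requires. (Separately, note that the ``only if'' direction as stated silently excludes $f(x_0)=+\infty$ --- the constant $+\infty$ function is convex and continuous but not bounded above by a real --- but that is an imprecision in the corollary itself, not in your proof.)
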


We next show convex  minorants inherit  continuity of a majorant.

\begin{cor}[Minorants]
Assume that $X$ is a topological group and $f,g:X\to [-\infty,\infty]$. Suppose that $g$ is bounded  above in a neighbourhood of $x_0$, $f$ is a convex minorant of $g$ and $f(x_0)$ is finite. Then $f$ is continuous at $x_0$.
\end{cor}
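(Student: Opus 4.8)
The plan is to reduce the statement to Proposition~\ref{lem cont}, whose conclusion already furnishes a quantitative modulus of continuity once a symmetric majorizing neighbourhood is in hand. First I would use the hypothesis that $g$ is bounded above near $x_0$: there is an open neighbourhood $U$ of $x_0$ and a constant $C\in\R$ with $g(x)\le C$ for all $x\in U$. Because $f$ is a minorant of $g$, the same bound $f(x)\le C$ holds throughout $U$. Thus $f$ is itself a convex function bounded above in a neighbourhood of $x_0$, and the task is to extract continuity from this.

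Next I would manufacture a symmetric set to feed into Proposition~\ref{lem cont}. Since $X$ is a topological group, translation by $x_0$ is a homeomorphism, so $U-x_0$ is a neighbourhood of the identity $0$; intersecting it with its own reflection gives a symmetric neighbourhood $B=(U-x_0)\cap(x_0-U)$ of $0$ with $x_0+B\subseteq U$. Setting $M=C-f(x_0)$, which is finite and nonnegative because $f(x_0)\le g(x_0)\le C$ and $f(x_0)$ is finite by hypothesis, we obtain $f(x)\le f(x_0)+M$ for every $x\in x_0+B$. This is precisely the hypothesis of Proposition~\ref{lem cont}, so I may conclude that $|f(x_0+y)-f(x_0)|\le \frac{M}{m}$ for every $m\in\N$ and every $y\in\frac1m B$.

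Finally I would convert this estimate into continuity. The key observation is that each $\frac1m B=\{x\mid mx\in B\}$ is a neighbourhood of $0$: the map $x\mapsto mx$ is continuous on the topological group $X$ and sends $0$ to $0\in B$, so $\frac1m B$ is the preimage of the neighbourhood $B$ and hence contains $0$ in its interior. Given $\eps>0$, choosing $m$ large enough that $\frac{M}{m}<\eps$ makes $x_0+\frac1m B$ a neighbourhood of $x_0$ on which $|f-f(x_0)|<\eps$, which is exactly continuity of $f$ at $x_0$.

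I do not anticipate a serious obstacle, since the corollary is essentially an immediate consequence of Proposition~\ref{lem cont}; the only point requiring genuine care is the verification that $\frac1m B$ is a neighbourhood of $0$, which rests on the continuity of multiplication-by-$m$ in a topological group. Everything else is routine bookkeeping: passing the upper bound from the majorant $g$ down to $f$, and upgrading that one-sided bound to the two-sided estimate already supplied by convexity inside Proposition~\ref{lem cont}.
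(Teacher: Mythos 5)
Your proof is correct and follows essentially the route the paper intends: transfer the upper bound from the majorant $g$ to $f$, then invoke Proposition~\ref{lem cont} on a symmetric neighbourhood, using continuity of $x\mapsto mx$ to see that $\tfrac1m B$ is a neighbourhood of $0$. The paper treats this as an immediate consequence of Proposition~\ref{lem cont} and its continuity corollary, and your write-up simply makes that reduction explicit.
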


\begin{prop}[Three-slope lemma for monoids]\label{lem three slope}
Let $X$ be a monoid, and $x,x_1,x_2\in X$, $m_1,m_2 \in \N$ such that $(m_1+m_2)x = m_1x_1+m_2x_2$. Then for any convex function $f:X\to (-\infty,\infty]$ we have

\begin{align*}
\frac{f(x)-f(x_1)}{m_2} \le \frac{f(x_2)-f(x_1)}{m_1+m_2} \le \frac{f(x_2)-f(x_1)}{m_1}.
\end{align*}
\end{prop}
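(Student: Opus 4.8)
The plan is to extract everything from a single application of the defining inequality for convex functions. The hypothesis $(m_1+m_2)x = m_1x_1+m_2x_2$, combined with the trivial identity $m_1+m_2 = m_1+m_2$, is precisely a convex combination in the sense of Definition~\ref{def convex function} (take $r = m_1+m_2$, $r_1 = m_1$, $r_2 = m_2$, all positive integers). Hence convexity of $f$ yields at once the master inequality
\[ (m_1+m_2)f(x) \le m_1 f(x_1) + m_2 f(x_2). \]
I expect this to be the only place where convexity is invoked; both displayed slope inequalities should then fall out as purely algebraic rearrangements of this one relation.

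Next I would check that each of the two claimed inequalities is equivalent to the master inequality. For the left-hand inequality, multiplying through by the positive quantity $m_2(m_1+m_2)$ and cancelling the common $f(x_1)$ contributions turns $\frac{f(x)-f(x_1)}{m_2} \le \frac{f(x_2)-f(x_1)}{m_1+m_2}$ into exactly $(m_1+m_2)f(x) \le m_1 f(x_1) + m_2 f(x_2)$. For the right-hand inequality, read with numerator $f(x_2)-f(x)$ as the three-chord picture demands, multiplying by $m_1(m_1+m_2)$ and rearranging collapses it to the very same master inequality. So no new idea is needed for the second inequality beyond the first, and the role of $m_1,m_2,m_1+m_2$ being positive is simply to guarantee that clearing denominators preserves the direction of the inequalities.

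The one genuinely delicate point is that $f$ takes values in $(-\infty,\infty]$, so the numerators may involve $+\infty$ and the cross-multiplication is unconditionally valid only when $f(x),f(x_1),f(x_2)$ are all finite. I would therefore treat that finite case as above, and then dispose of the infinite cases by hand using the stated conventions for arithmetic with $+\infty$: if $f(x)=+\infty$ the master inequality forces $f(x_1)=+\infty$ or $f(x_2)=+\infty$, and one verifies directly that both displayed inequalities persist; the cases where $f(x)$ is finite but $f(x_1)$ or $f(x_2)$ equals $+\infty$ are handled similarly, the effect being to push the relevant slope to $+\infty$ on the appropriate (larger) side. This case analysis is the main obstacle, but it is bookkeeping rather than mathematics, since the entire content of the lemma is the single convexity inequality together with the positivity of the integer denominators.
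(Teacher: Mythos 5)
Your proof is essentially the paper's: the paper's entire argument is the single line that convexity gives $(m_1+m_2)f(x)\le m_1f(x_1)+m_2f(x_2)$, ``from which both inequalities follow easily,'' and your cross-multiplications (plus the $+\infty$ bookkeeping the paper leaves implicit) are exactly that rearrangement. You were also right to read the final numerator as $f(x_2)-f(x)$: as literally printed the right-hand inequality compares $\frac{f(x_2)-f(x_1)}{m_1+m_2}$ with $\frac{f(x_2)-f(x_1)}{m_1}$ and so would fail whenever $f(x_2)<f(x_1)$, whereas the three-chord version with numerator $f(x_2)-f(x)$ is precisely what the master inequality delivers.
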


\begin{proof}
By convexity, we have $(m_1+m_2)f(x) \le m_1f(x_1) + m_2f(x_2)$, from which both inequalities follow easily.
\end{proof}

Except in a divisible setting we do not capture convexity using only three points -- we can not induct.

\begin{prop}[Monotone composition]
Assume that $X$ is a monoid. If $f:X\to (-\infty,\infty]$ is sublinear and increasing and $g:X\to (-\infty,+\infty)$ is convex and non-decreasing, then $f\circ g$ is also convex.
\end{prop}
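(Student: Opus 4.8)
The plan is to verify the monoid convexity inequality for the composite $h:=f\circ g$ directly, by propagating the convexity inequality for the inner map $g$ through the outer map $f$, using that $f$ is monotone and sublinear (hence positively homogeneous and subadditive) on the ordered codomain of $g$. So I would fix $n\in\N$, positive integers $m,m_1,\dots,m_n$ with $m=\sum_{i=1}^n m_i$, and points $x,x_1,\dots,x_n\in X$ with $mx=\sum_{i=1}^n m_i x_i$; the goal is to establish
\[
m\,h(x)\le\sum_{i=1}^n m_i\,h(x_i).
\]

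First I would apply convexity of $g$ to the very relation $mx=\sum_{i=1}^n m_i x_i$, which gives $m\,g(x)\le\sum_{i=1}^n m_i\,g(x_i)$ in the codomain of $g$. Next I would push this inequality through $f$: since $f$ is increasing, applying it to both sides preserves the order, so $f\big(m\,g(x)\big)\le f\big(\sum_{i=1}^n m_i g(x_i)\big)$. Finally I would invoke that $f$ is $\N$-sublinear: positive homogeneity gives $f\big(m\,g(x)\big)=m\,f(g(x))=m\,h(x)$ because $m\in\N$, while subadditivity together with homogeneity gives $f\big(\sum_{i=1}^n m_i g(x_i)\big)\le\sum_{i=1}^n f\big(m_i g(x_i)\big)=\sum_{i=1}^n m_i\,f(g(x_i))=\sum_{i=1}^n m_i\,h(x_i)$. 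Chaining these three facts produces exactly the displayed inequality, so $h=f\circ g$ is convex.

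Each step is short, and the substantive content is just the two-stage chain ``convexity of the inner map $g$, then monotonicity followed by sublinearity of the outer map $f$''; notably the non-decreasing hypothesis on $g$ is not actually used. I expect the only delicate point to be the arithmetic at the boundary, namely the interaction with the value $+\infty$. Here I would lean on the conventions $\infty-\infty=0\cdot\infty=\infty$ fixed earlier in the excerpt, and check that the monotonicity step remains valid when $f$ takes the value $+\infty$: if $f$ is infinite at $\sum_{i=1}^n m_i g(x_i)$ the right-hand side is $+\infty$ and the inequality is trivial, and the ordering $m\,g(x)\le\sum_{i=1}^n m_i g(x_i)$ together with $f$ increasing forces consistency when the left-hand side is infinite as well. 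I anticipate this extended-value bookkeeping, rather than any structural obstruction, to be the only real care required.
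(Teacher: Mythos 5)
Your proposal is correct and follows essentially the same route as the paper: apply convexity of $g$ to the relation $mx=\sum_{i=1}^n m_i x_i$, push the resulting inequality through the increasing map $f$, and then use positive homogeneity and subadditivity of $f$ to obtain $mf(g(x))\le\sum_{i=1}^n m_i f(g(x_i))$. Your additional observations — that the non-decreasing hypothesis on $g$ is never used, and the bookkeeping for the value $+\infty$ — are accurate refinements of the paper's one-line chain of inequalities, not deviations from it.
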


\begin{proof}
Assume that $mx = \sum_{i=1}^nm_ix_i$, $m_i\in \N$, $m=\sum_{i=1}^nm_i$. Then,
\begin{align*}
mf(g(x)) & = f\big(mg(x)\big) \le f\big(m_1g(x_1)+\dots + m_ng(x_n)\big) \le \sum_{i=1}^nm_if(g(x_i)),
\end{align*}
as required.
\end{proof}

\begin{remark}[Midpoint convexity and measurability] 
It is well known that measurability forces a \emph{midpoint convex} function on $\R$ to be convex and an additive function to be linear. There are certainly analogous results to be discovered in appropriate monoids, see for example~\cite{Ros09}.
\qede
\end{remark}

\subsection{Operations on functions}

We next extend some well-known  vector operations on convex and subadditive functions.

\begin{definition}[Subadditive and sublinear minorants]
Assume that $X$ is a monoid and $f:X\to (-\infty,\infty]$. Define 
\begin{align*}
p(x) = \inf\left\{\left.\sum_{i=1}^nf(x_i) ~ \right| ~ \sum_{i=1}^nx_i = x, ~ n\in \N\right\}.
\end{align*}
Then $p$ is the largest function satisfying $p\le f$ and also $p(x+y) \le p(x)+p(y)$. Define also
\begin{align*}
po(x) = \inf\left\{\left.\frac{p(mx)}{m} ~ \right| ~ m\in \N\right\},
\end{align*}
where $p$ is defined as above.
\end{definition}
Now
$po$ is positively homogeneous as we have
\begin{align*}
po(m_0x) & = m_0\inf\left\{\left.\frac{1}{m_0 m}\sum_{i=1}^nf(x_i)~\right|~ m\in \N, ~~ \sum_{i=1}^nx_i = m_0 x\right\}
\\ & = m_0\inf\left\{\left.\frac{1}{m}\sum_{i=1}^nf(x_i)~\right|~ m\in \N, ~~ \sum_{i=1}^nx_i = x\right\},
\end{align*}
where the last equality holds since for every $x_1,\dots, x_n\in X$, we can choose $x'_1,\dots,x'_{n'}\in X$ satisfying $\frac 1 m\sum_{i=1}^nf(x_i) = \frac 1 {m_0m}\sum_{i=1}^{n'}f(x'_i)$. Also $po$ is  subadditive since 
\begin{align*}
\frac 1 {m_1}\sum_{i=1}^nf(x_i)+\frac 1 {m_2}\sum_{i=1}^{n'}f(x'_i)
= \frac 1 {m_1m_2}\left(\sum_{i=1}^nm_2f(x_i)+\sum_{i=1}^{n'}m_1f(x'_i)\right),
\end{align*}
where $\sum_{i=1}^nx_i=m_1x$, $\sum_{i=1}^{n'}x_i'=m_2y$. Choosing a finite index set $I$ which is $m_2$ copies of each $x_i$ for $1\le i \le n$ and $m_1$ copies of each $x'_i$ for $1\le i \le n'$ we get $\sum_{i\in I}x_i = m_1m_2(x+y)$. Thus,
\begin{align*}
\frac 1 {m_1}\sum_{i=1}^nf(x_i)+\frac 1 {m_2}\sum_{i=1}^{n'}f(x'_i) = \frac{1}{m_1m_2}\sum_{i\in I}f(x_i) & \ge po(x+y).
\end{align*}
Taking infima over $m_1$, $m_2$ implies that $po$ is sublinear.

\begin{definition}[$\N$-Sublinear minorant]
Assume that $X$ is a monoid and $f,g:X\to (-\infty,\infty]$. Define 
\begin{align*}
f\wedge g(x) = \inf\left\{\left.\frac{n_1f(x_1)+n_2g(x_1)}{n} ~ \right| ~ n_1x_1+n_2x_2 = nx\right\}.
\end{align*}
\end{definition}
It is straightforward to check that if $f$, $g$ are $\N$-sublinear, so is $f\wedge g$.

\section{Examples}\label{sec examples}

\begin{example}[Vector spaces]
If $X$ is a real vector space, then by definition, $x\in \conv(A)$ if for every $n\in \N$, every $\alpha_1,\dots,\alpha_n\in (0,1)$ and every $x_1,\dots,x_n\in A$,
\begin{align*}
\left(\sum_{i=1}^n\alpha_i\right)x = \sum_{i=1}^n\alpha_ix_i. 
\end{align*}
Taking $\beta_i=\frac{\alpha_i}{\sum\alpha_i}>0$, this is equivalent to 
\[x=\sum_{i=1}^n\beta_ix_i, ~~ \sum_{i=1}^n\beta_i=1,\]
which is the standard definition of a convex hull in a vector space over $\R$.
 \qede
\end{example}

\begin{example}[$\R$ as a $\mathbb Q$-module]\label{ex r q}
Consider $X=\R$ as a vector space over $\mathbb Q$. In such case $x\in \conv(A)$ if for every $n\in \N$, every $q_1,\dots,q_n\in \mathbb Q_+\setminus\{0\}$ and every $x_1,\dots,x_n\in A$, 
\begin{align*}
qx=\sum_{i=1}^nq_ix_i, ~~ q=\sum_{i=1}^nq_i,
\end{align*}
which is equivalent to 
\begin{align*}
x = \sum_{i=1}^nq'_ix_i, ~~ \sum_{i=1}^nq'_i=1, ~~ q'_i\in [0,1]\cap \mathbb Q,
\end{align*}
i.e., we take only \emph{rational} convex combinations.
\qede
\end{example}

We now present  examples of monoids and of the behaviour of the hull operator.
 
\begin{example}[The lattice $\Z^d$]\label{ex z}
Consider $X=\Z^d$ with the addition induced from $\R^d$. For every $A\subseteq  X$, we have
\begin{align}\label{conv in z}
\conv_{\Z^d}(A) = \conv_{\R^d}(A)\cap \Z^d,
\end{align}
where $\conv_{\R^d}(A)$ is the standard convex hull of $A$ in $\R^n$. To see this, first note that if $x\in \conv_{\Z^d}(A)$, then there exist $x_1,\dots,x_n\in A$, and $m_1,\dots,m_n,m\in \N$ such that $mx = \sum_{i=1}^n m_ix_i$, $m=\sum_{i=1}^nm_i$. This implies that 
\[x=\sum_{i=1}^n\frac{m_i}{m}x_i, ~~\sum_{i=1}^n\frac{m_i}{m}=1,\] 
which means that $x\in \conv_{\R^d}(A)$, and so $\conv_{\Z^d}(A)\subseteq \conv_{\R^d}(A)\cap \Z^d$. To prove to other inclusion, use induction on the dimension. If $d=1$, and $x\in \conv_{\R}(A)\cap \Z$, then $x$ is an integer which is also a convex combination of two other integers $x_1,x_2$. Therefore, we can write $x=q_1x_1+q_2x_2$ with $q_1,q_2\in \mathbb Q$, and so there exist $m_1,m_2,m\in \Z$ such that $mx = m_1x_1+m_2x_2$ and $m=m_1+m_2$. To prove the general case, assume that $x\in \conv_{\R^d}(A)\cap \Z^d$. Then there exist $x_1,\dots,x_n\in A$ and $\alpha_1,\dots,\alpha_n\ge 0$ with $\sum_{i=1}^n \alpha_i=1$ such that $x=\sum_{i=1}^n \alpha_ix_i$. By Carath\'eodory's Theorem \cite{Mat02}, we can write $x = \sum_{i=1}^{n'}\alpha_j x_j$, where $n'\le d+1$ (we might have to rearrange the points $x_1,\dots,x_n$). If $\mathrm{dim}\big(\mathrm{span}\{x_1,\dots,x_{n'}\}\big)<d$, use the 
induction hypothesis to conclude that we can write $x=\sum_{i=1}^{n'}q_ix_i$, with $q_i\in \mathbb Q$. Otherwise, we have the following linear system.
\begin{align*}
\left[ \begin{array}{cccc}
x_1 & x_2 & \dots & x_{d+1} \\
  1  & 1   & \dots & 1
\end{array}
\right]
\left[ \begin{array}{c}
\alpha_1 \\ \vdots \\ \alpha_{d+1}
\end{array}
\right] = x.
\end{align*}
where $x_1,\dots,x_{d+1}$ are written as column vectors. In this case, one can show that the system has a \emph{unique} solution. Thus the matrix is invertible. Since the matrix has integer coefficients, it follows that the $q_i$'s are rational. And so once again we can write $x=\sum_{1}^{n'}q_ix_i$ with $q_i\in \mathbb Q$, which implies that $x\in \conv_{\Z^d}(A)$. \qede
\end{example}

\begin{example}[General lattices in $\R^d$] 
We say that $v_1,\dots,v_k\in \R^d$ are independent over $\Z$ if
\begin{align*}
\sum_{i=1}^km_iv_i = 0, ~~m_i\in \Z \Longrightarrow m_i = 0.
\end{align*}
Assume that $\Gamma = \spn_{\Z}\{v_1,\dots,v_k\}$, where $v_1,\dots,v_k\in \R^d$ are independent over $\Z$. Let $T:\R^k\to \R^d$ be defined as 
\[T(\alpha_1,\dots,\alpha_k) = \sum_{i=1}^k \alpha_iv_i.\]
$T$ is linear and $T(\Z^k) = \Gamma$. Also, since $v_1,\dots,v_k$ are independent over $\Z$, it follows that $T\big|_{\Z^k}$ is invertible. Finally, since $\Gamma$ is a $\Z$-module, it follows from Proposition~\ref{hull monoid} that
\begin{eqnarray*}
\conv_{\Gamma}(A) =  \left\{\left.\sum_{i=1}^nq_ia_i ~\right|~ a_i\in A, q_i\in \Q\cap[0,1], \sum_{i=1}^nq_i=1 \right\}.
\end{eqnarray*}
Hence,
\begin{eqnarray*}
\conv_{\Gamma}(A)& = & T\left(\conv_{\Z^k}\left(T^{-1}A\right)\right)
\\ & \stackrel{(*)}{=} & T\left(\conv_{\R^k}\left(T^{-1}A\right)\cap \Z^k\right)
\\ & \stackrel{(**)}{=} & T\left(\conv_{\R^k}\left(T^{-1}A\right)\right)\cap T\Z^k
\\ & \stackrel{(***)}{=} & \conv_{\R^d}(A)\cap \Gamma,
\end{eqnarray*}
where in ($*$) we used Example~\ref{ex z}, in ($**$) we used the invertibility of $T$ over $\Z^k$, and in ($***$) we used the linearity of $T$. \qede
\end{example}

\begin{example}[Dyadic rationals]
Let $X$ be the rational numbers of the form $\frac m {2^n}$, where $m,n\in \Z$. We have that $X$ is 2-semidivisible as $X = 2X$, since $\frac m {2^n} = 2\frac m {2^{n-1}}$, but for any odd number $k$ we do not have $1 = k\cdot \frac {m}{2^n}$. Thus, $X$ is not divisible. \qede
\end{example}
 
\begin{example}[Arctan semigroup] Let $X=([0,\infty),\oplus)$ with addition defined by \[a\oplus b = \frac{a+b}{1+ab}.\]
Note that if $a,b\neq 0$ then $a\oplus b = \frac 1 a \oplus\frac 1 b$. The unit is $0$ as $a\oplus 0 = a$. Also, for all $a\ge 0$, $a\oplus 1 = 1$. Hence, $\conv(\{0\})=\{0\}$ and $\conv(\{1\}) = \{1\}$. For every $a>0$ we have $a\oplus a = \frac 1 a \oplus \frac 1 a$. Thus, if $a\neq 1$ then $\frac 1 a \in \conv(\{a\})$. This means that $\{0\}$ and $\{1\}$ are the only convex singletons. Also, since $a\oplus 1=1$ for every $a\in X$, then for every $A\subseteq X$, we have
\[\conv(A\cup\{1\}) = \conv(A)\cup\{1\}.\]
Finally, note that for every $a\ge 0$, we have
\begin{align*}
3a = a\oplus a\oplus a = \frac{3a+a^3}{1+3a^2},
\end{align*}
and the function $a\mapsto \frac{3a+a^3}{1+3a^2}$ is onto $[0,\infty)$. Thus, $X$ is 3-semidivisible. On the other hand, $a\oplus a = \frac{2a}{1+a^2} \le 1$, and so $X$ is not divisible. In fact is is divisible precisely for all odd numbers. \qede
\end{example}

The  next example illustrates that  finding convex or affine functions on a group is solving potentially subtle functional equations and inequalities

\begin{example}[Hyperbolic group]
Let $X_p$ be the collection of all $2\times 2$ symmetric matrices of the form $e^{\frac{2\pi i l}{p}} M(\theta)$, where $M(\theta) = \left[\begin{array}{c c} \cosh(\theta) & \sinh(\theta) \\ \sinh(\theta) & \cosh(\theta)\end{array}\right]$, $\theta\in \R$ and $0\le l \le p-1$. Then $X_p$ is a group under the standard matrix multiplication, as we have 
\[\left(e^{\frac{2\pi i l_1}p}M(\theta_1)\right)\cdot \left(e^{\frac{2\pi il_2}p}M(\theta_2)\right) = e^{\frac{2\pi i (l_1+l_2)}p}M(\theta_1+\theta_2).\] 
In particular, the group is commutative. Also, if $p|n$, we have that $M(\theta)^{n} = \left(e^{\frac{2\pi il}{p}}M(\theta)\right)^{n} = M(n\theta)$ for all $0\le l \le p-1$. Thus, in this case we have $nX_p\subsetneq X_p$. Otherwise, if $p\nmid n$, then we have $\left(e^{\frac{2\pi l}p}M(\theta)\right)^{n} = e^{\frac{2\pi nl}p}M(n\theta)$. Since $\theta\mapsto n\theta$ and $e^{\frac{2\pi l}{p}}\mapsto e^{\frac{2\pi n l}{p}}$ is one-to-one and onto (the second since $p\nmid n$), it follows that in this case $nX_p=X_p$. Altogether, we conclude that $X_p$ is $n$-divisible if and only if $p\nmid n$. 

Next, we would like to show that it is easy to produce convex functions on the group $X_p$. Indeed, if $f:\R\to \R$ is a convex function then defining $F\big(e^{\frac{2\pi i l}{p}}M(\theta)\big) = f(\theta)$ is also convex. To see this, for $m_1,\dots,m_n\in \N$ and $x_1,\dots,x_n,x\in X$ satisfying $mx = \sum_{i=1}^nm_ix_i$, $m=\sum_{i=1}^nm_i$, assume that $x = e^{\frac{2\pi i l}{p}}M(\theta), x_1 = e^{\frac{2\pi i l_1}{p}}M(\theta_1), \dots, x_n = e^{\frac{2\pi i l_n}{p}}M(\theta_n)$. Thus, we have 
\begin{align}\label{equality matrices}
e^{\frac{2\pi i ml}{p}}M(m\theta) = e^{\frac{2\pi i }{p}\sum_{j=1}^nm_jl_j}M(m_1\theta_1+\dots m_n\theta_n).
\end{align} 
Note that if $e^{\frac{2\pi i l}{p}}M(\theta)$ is the identity matrix, then $l=\theta=0$. Therefore, if $e^{\frac{2\pi i l_1}{p}}M(\theta_1) = e^{\frac{2\pi i l_1}{p}}M(\theta_1)$, then $l_1=l_2$ and $\theta_1=\theta_2$. In particular, ~\eqref{equality matrices} implies that $m\theta = \sum_{i=1}^nm_i\theta_i$. Hence, we have
\begin{align*}
mF(x) & = mf(\theta) \le \sum_{i=1}^n m_if(\theta_i)= \sum_{i=1}^nm_iF(x_i).
\end{align*}

Note that restriction to $M(\theta)$ (determinant one) is a divisible subgroup. Also, consider the group 
\begin{align*}
X_{\R} = \Big\{ e^{it}M(\theta)~\Big|~ t,\theta \in \R\Big\},
\end{align*}
again with the standard multiplication. Then $X_{\R}$ is a divisible group, since for every $t,\theta\in \R$ and every $n\in \N$, we have
\begin{align*}
e^{it}M(\theta) = \left(e^{i\frac t n}M(\theta/n)\right)^n.
\end{align*}
Note that for every $p$, $X_p$ is a semidivisible subgroup of $X_{\R}$. Finally, note that if we consider $X_{\R}$ as a topological space, equipped with the topology induced from $\R^4$, then $X_{\R}$ is connected since we can write $X_{\R} = \Phi(\R^2)$, where $\Phi:(t,\theta)\mapsto e^{it}M(\theta)$ is continuous. See~\cite{BG15} for a more detailed discussion on convexity in topological groups. \qede
\end{example}

\begin{example}[Finite groups]\label{ex finite}
If $X$ is a finite group then by the pigeon hole principle there exists $m\in \N$ such that $mx=0 = m\cdot 0$. Thus $x\in \mathrm{conv}(\{0\})$ for every $x\in X$. Hence, $X$ and $\emptyset$ are the only convex sets in $X$. \qede
\end{example}

\begin{example}[Circle group] Let $\T=\R/\Z$ with the standard coset addition. In this case, if $x= [m/n]$ $m,n\in \N$ then $nx = [0]$. 
Thus, 
\[\conv(\{0\}) = \big\{x\in \T ~\big| ~ x \text{ has finite order}\big\}.\]
Also, for every $x\in X$, $x+y\in \conv(\{x\})$ for every $y\in X$ which is of finite order. Thus, there are no convex singletons in $X$. \qede
\end{example}

\begin{example}[Pr\"ufer group]
This is a subgroup of the circle group $\T$, which is given by
\begin{align*}
\Z(p^{\infty}) = \big\{\exp\big(2\pi im / p^n\big)~\big|~m,n\in \N\cup\{0\}\big\},
\end{align*}
i.e., all $p^n$-th roots of unity. Every element in this group has a finite order and so by the previous example (and also by example~\ref{ex finite}), the only two convex sets are $\emptyset$ and the entire group. It is also known that $\Z(p^{\infty})$ is divisible. To see this, note that it is enough to show that $X=qX$ for every prime $q$. Let $x=\exp\big(2\pi i m/p^n\big)$. If $n=0$ then $x=1=1^q$. Assume then that $n>0$. If $q=p$ then $x = y^q$ where $y=\exp\big(2\pi im/p^{n+1}\big)$. If $q\neq p$ then since the greatest common divisor of $p^n$ and $q$ is 1, there exist $a,b\in \Z$ such that $ap^n+bq=1$. So $x = x^{ap^n+bq} = x^{ap^n}x^{bq} = x^{bq}$. Choosing $y=x^b$, then $x=y^q$, as needed.
\qede
\end{example}

\begin{example}[Extensions of $\Q$]
Consider $X = \Q+\theta \Q$, where $\theta$ is irrational, with the addition operation then the mapping $\Phi:a+\theta b \mapsto (a,b)$ is a group homomorphism from $X$ to $\Q^2$. Thus 
\[\conv_{X}(A) = \Phi^{-1}\big(\conv_{\Q^2}(\Phi(A))\big).\]
Similarly, we can consider extensions of $\Q$ be any number of algebraically independent numbers. \qede
\end{example}

\begin{example}[Half line with multiplication]
If $X=((0,\infty),\cdot)$, this semigroup is isomorphic to $(\R,+)$ via $x\mapsto \log(x)$. Thus,
\begin{align}\label{conv log}
\conv_X(A) = \exp\big(\conv_{(\R,+)}(\log(A))\big).
\end{align}
If instead we choose $X=([0,\infty),\cdot)$, then if $0\in A$, we have
\[\conv_X(A) = \{0\}\cup \exp\big(\conv_{(\R,+)}(\log(A))\big),\] 
if $0\notin A$ then~\eqref{conv log} still holds. 
\qede
\end{example}

\begin{example}[$\sigma$-algebras with symmetric differences]
Given a set $S$, let $X$ be a $\sigma$-algebra of subsets of $S$. For $A,B\in X$, let $A+B = A\triangle B = (A\cup B)\setminus (A\cap B)$. Clearly $A\triangle B = B\triangle A$. Also, note that for every $A\in \mathcal F$, $A\triangle \emptyset =A$, and $A\triangle A = \emptyset$. Thus, $\emptyset$ is the additive unit and $A = -A$. It also follows that from every $A\in X$ and $n\in \mathbb N$, $2nA=\emptyset$ and $(2n-1)A = (2nA)\triangle A = \emptyset\triangle A = A$. Thus, $2nX= \{\emptyset\} \subsetneq X$ and $(2n-1)X=X$, and so $X$ is ($2n-1$)-semidivisible but not $2n$-semidivisible. Next, assume that $A_1,\dots,A_n, A\in X$ and $m_1,\dots,m_n,m\in \N$ are such that $mA = \sum_{i=1}^nm_iA_i$ and $m = \sum_{i=1}^nm_i$. Then by the above arguments we have in fact
\begin{align*}
mA = \sum_{i: 2\nmid m_i}m_iA_i = \sum_{i: 2\nmid m_i}A_i.
\end{align*}
Thus, if $\mathcal A\subseteq X$, then we can write
\begin{align*}
\conv(\mathcal A) = \left\{A\subseteq X~\left| ~A = \sum_{i=1}^nA_i, ~~ A_i\in \mathcal A, ~~ n\in \N\right.\right\}.
\end{align*}
Note that we always have $\emptyset\in \conv(\mathcal A)$ since $A+ A = \emptyset = 2\emptyset$. This group can also be studied as a topological group. See~\cite{BG15}.
\qede
\end{example}
 
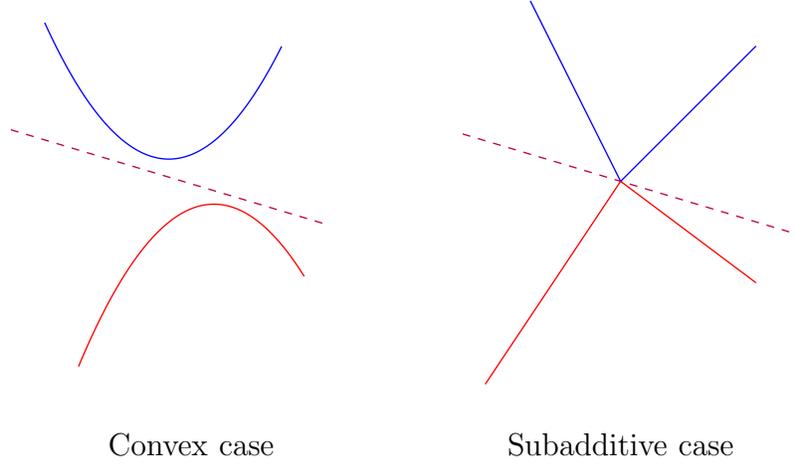
\begin{figure}[t]
\begin{center}
\begin{tikzpicture}
  \draw[line width=0.5pt, scale=0.3, draw=blue] plot[smooth, samples=100, domain=-6.5:4] ({\x},{0.2*(\x+1)*(\x+1)+1});
  \draw[line width=0.5pt, scale=0.3, draw=red]  plot[smooth, samples=100, domain=-5:5] ({\x},{-0.2*(\x-1)*(\x-1)-1});
  \draw[line width=0.5pt, dashed, scale=0.3, draw=purple]  plot[smooth, samples=100, domain=-8:6] ({\x},{-0.3*\x-0.1});
  \node [below=6pt] at (0,-3) {Convex case};
\end{tikzpicture}
\qquad \qquad
\begin{tikzpicture}
  \draw[line width=0.5pt, scale=0.3, draw=blue] plot[smooth, samples=100, domain=0:6] ({\x},{\x});
  \draw[line width=0.5pt, scale=0.3, draw=blue] plot[smooth, samples=100, domain=-4:0] ({\x},{-2*\x});
  \draw[line width=0.5pt, scale=0.3,draw=red]  plot[domain=0:6,smooth, samples=100] ({\x},{-0.75*\x});
  \draw[line width=0.5pt, scale=0.3, draw=red]  plot[domain=-6:0,smooth, samples=100] ({\x},{1.5*\x});
  \draw[line width=0.5pt, dashed, scale=0.3,draw=purple]  plot[domain=-7:8,smooth, samples=100] ({\x},{-0.3*\x});
  \node [below=6pt] at (0,-3) {Subadditive case};
\end{tikzpicture}
\end{center}
\caption{Separation in groups}\label{fig:groups}
\end{figure}

\section{Interpolation of scalar-valued functions}\label{sec interpolate}
We begin with a slight extension of a seminal result.

\begin{thm}[Kaufman \cite{Kau66}]\label{thm kaufman}
Let $X$ be a monoid and $f,g:X\to [-\infty,\infty)$ satisfying $g\le f$, where $f$ and $-g$ are subadditive. Then there exists a function $a:X\to \R$ which is additive and satisfies $g\le a\le f$.
\end{thm}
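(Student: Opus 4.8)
The plan is to reduce the problem to a positively homogeneous (``sublinear versus superlinear'') sandwich, and then to solve that by passing to the group of differences, where a Hahn--Banach extension becomes available. Since $f$ is subadditive and $-g$ is subadditive, $g$ is superadditive, and I would first pass to the associated positively homogeneous envelopes via Fekete's lemma: set $\bar f(x)=\inf_{n\in\N} f(nx)/n=\lim_n f(nx)/n$ and $\underline g(x)=\sup_{n\in\N} g(nx)/n=\lim_n g(nx)/n$. Then $\bar f$ is $\N$-sublinear with $\bar f\le f$ (it is exactly the sublinear minorant of $f$ produced by the $po$ construction), and dually $-\underline g$ is $\N$-sublinear with $\underline g\ge g$. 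The point is that $\underline g\le \bar f$: from $m\,g(nx)\le g(mnx)\le f(mnx)\le n\,f(mx)$ (using superadditivity, then $g\le f$, then subadditivity) one gets $g(nx)/n\le f(mx)/m$ for all $m,n$, and taking $\sup_n$ followed by $\inf_m$ yields the claim. It therefore suffices to find an additive $a$ with $\underline g\le a\le \bar f$, since then $g\le\underline g\le a\le\bar f\le f$. Write $p:=\bar f$ (sublinear) and $q:=\underline g$ (superlinear), so $q\le p$ and $p(0)=q(0)=0$.

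Next I would build a sublinear gauge on the difference group. Let $G$ be the Grothendieck group of $X$ with canonical homomorphism $\iota:X\to G$, and define $P:G\to\R$ by $P(\xi)=\inf\{p(x)-q(y): \iota(x)-\iota(y)=\xi\}$. Using sub/superadditivity one checks directly that $P$ is subadditive, and a similar estimate shows it is finite-valued. The crucial claim is $P(0)=0$. Clearly $P(0)\le p(0)-q(0)=0$, so I must show that $\iota(x)=\iota(y)$ forces $p(x)\ge q(y)$. Now $\iota(x)=\iota(y)$ means $x+z=y+z$ for some $z$, and a short induction propagates this to $nx+z=ny+z$ for every $n$. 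Hence
\[
n\,q(y)+q(z)\le q(ny+z)=q(nx+z)\le p(nx+z)\le n\,p(x)+p(z),
\]
so $q(y)-p(x)\le \big(p(z)-q(z)\big)/n$, and letting $n\to\infty$ gives $q(y)\le p(x)$. This is precisely where the homogeneity gained in the first step is indispensable.

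With $G$ a group and $P$ subadditive satisfying $P(0)=0$, I would construct an additive homomorphism $\ell:G\to\R$ with $\ell\le P$ by the usual Zorn's-lemma one-point extension. The reason this works on $G$, although it fails on the monoid $X$ itself, is that when extending $\ell$ across a new generator $\xi_0$ one must verify $k\,P(h-j\xi_0)+j\,P(h'+k\xi_0)\ge P(kh+jh')\ge \ell(kh+jh')$, and here the two multiples of $\xi_0$ \emph{cancel}; this cancellation is exactly what the additive inverses in $G$ supply. Finally put $a:=\ell\circ\iota$. Then $a$ is additive, $a(x)=\ell(\iota x)\le P(\iota x)\le p(x)$, and $a(x)=-\ell(-\iota x)\ge -P(-\iota x)\ge q(x)$ because $P(-\iota x)\le p(0)-q(x)=-q(x)$. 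Thus $g\le q\le a\le p\le f$, as required.

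The decisive point is the identity $P(0)=0$: without it $P$ is not a genuine sublinear gauge and no additive minorant can exist, so this is where I expect the real work to lie. Its proof rests entirely on the positive homogeneity manufactured in the reduction step, and it is exactly the obstruction that defeats a naive one-point extension performed directly on $X$, which need not contain inverses. The remaining matters---finiteness and properness of $P$, and the bookkeeping when $g$ is permitted the value $-\infty$ (where the constraint $a\ge g$ is vacuous) while $f$ must remain finite for a real additive minorant to exist---are routine and I would settle them at the end.
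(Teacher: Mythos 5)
The paper does not actually prove this statement: it is quoted from Kaufman's 1966 paper, and the only interpolation argument the authors write out is for the companion Theorem~4.2, which proceeds by a completely different route (a Zorn's-lemma argument on maximal pairs of concave/convex functions, improved one point at a time directly on the monoid, with no passage to a group of differences and no homogenisation). So there is nothing in the paper to match your argument against; judged on its own, your proof is essentially correct in the setting Kaufman actually treats, namely real-valued $f$ and $g$ on a commutative monoid. The Fekete envelopes $\bar f$, $\underline g$ are well defined and satisfy $\underline g\le\bar f$ by exactly the chain you give; $P$ is subadditive on the Grothendieck group; the computation $nq(y)+q(z)\le np(x)+p(z)$ for $x+z=y+z$ does force $P(0)=0$; and the one-point extension goes through because $kP(h-j\xi_0)+jP(h'+k\xi_0)\ge P(kh+jh')$ (one should also check the torsion case, where $m\xi_0$ already lies in the current domain and $c$ is forced to equal $\ell(m\xi_0)/m$, but the same subadditivity estimate shows this forced value satisfies all the constraints).

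Two corrections are worth making. First, your diagnosis of where homogeneity is ``indispensable'' is misplaced: the proof of $P(0)=0$ uses only iterated sub- and superadditivity ($p(nx)\le np(x)$, $q(ny)\ge nq(y)$), and would work verbatim with $p=f$, $q=g$. Where the homogenisation genuinely earns its keep is at the very last step: it gives $p(0)=q(0)=0$ exactly, so that $a\le P\circ\iota\le p-q(0)=p\le f$ and dually $a\ge q\ge g$; without it one only obtains $a\le f-g(0)$, i.e.\ an additive minorant of $f$ shifted by the nonnegative constant $-g(0)$. Second, the deferred $[-\infty,\infty)$ bookkeeping is not routine, because the statement as printed is actually false for such data: take $X=\N\cup\{0\}$, $f(n)=-n^2$ (subadditive) and $g\equiv-\infty$; then $g\le f$ and $-g$ is subadditive, yet no real-valued additive $a$ satisfies $a(n)=na(1)\le -n^2$. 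Any complete write-up must therefore add the hypothesis that $\inf_n f(nx)/n>-\infty$ for every $x$ (in particular it suffices that $g$ be real-valued, since then $\bar f\ge\underline g\ge g>-\infty$), rather than treat the infinite values as an afterthought.
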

Theorem~\ref{thm kaufman} is a generalization  of Kaufman's Hahn-Banach result which itself extends the seminal  result by Mazur and Orlicz~\cite{MO53}. Under the assumption that $X$ is semidivisible, the following holds.

\begin{thm}[Interpolation of convex functions]\label{thm separate}
Assume that $X$ is a semidivisible monoid, and $f:X\to [-\infty,\infty]$ and $-g: X\to [-\infty,\infty]$ are convex. Then there exists a function $a:X\to [-\infty,\infty]$ which is generalised affine and satisfies $g\le a \le f$.
\end{thm}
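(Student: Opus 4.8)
The plan is to reduce the convex sandwich theorem to its subadditive prototype, Theorem~\ref{thm kaufman}, through a homogenisation that converts convexity on $X$ into subadditivity on the product monoid $\hat X=X\times(\N\cup\{0\})$ with coordinatewise addition, a pair $(x,m)$ being thought of as a formal weighted point. To $f$ I attach its infimal envelope and to $g$ its supremal envelope at a fixed total weight,
\[
\hat f(x,m)=\inf\Big\{\,\textstyle\sum_{i} m_i f(x_i)\ \Big|\ \sum_i m_i x_i=x,\ \sum_i m_i=m\,\Big\},\qquad \hat g(x,m)=\sup\Big\{\,\textstyle\sum_i m_i g(x_i)\ \Big|\ \cdots\,\Big\},
\]
with $\hat f(0,0)=\hat g(0,0)=0$. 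On the slice $m=1$ the only admissible representation is trivial, so $\hat f(\cdot,1)=f$ and $\hat g(\cdot,1)=g$; and since $x=1\cdot x+(m-1)\cdot 0$ is always a representation of weight $m$, one has $\hat f(x,m)\le f(x)+(m-1)f(0)$, so (in the principal case $f<+\infty$) $\hat f$ is $[-\infty,\infty)$-valued as Theorem~\ref{thm kaufman} demands.

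The structural checks are routine: concatenating representations of $(x,m)$ and $(y,n)$ into one of $(x+y,m+n)$ shows that $\hat f$ is subadditive and that $-\hat g$ is subadditive. Granting the comparison $\hat g\le\hat f$ for the moment, Theorem~\ref{thm kaufman} yields an additive $\hat a:\hat X\to\R$ with $\hat g\le\hat a\le\hat f$. Any additive function on $X\times(\N\cup\{0\})$ has the form $\hat a(x,m)=\ell(x)+mc$ with $\ell$ additive on $X$ and $c=\hat a(0,1)$, so its restriction $a:=\hat a(\cdot,1)=\ell+c$ is affine, hence generalised affine, and reading off the slice $m=1$ gives $g\le a\le f$.

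The crux, and the only place semidivisibility is used, is the inequality $\hat g\le\hat f$, which amounts to showing that for any two representations of the same $(x,m)$ one has $\sum_i m_i g(x_i)\le\sum_j m'_j f(x'_j)$. Here a single midpoint is not available because $m$ need not be a power of the prime $p$ with $pX=X$; instead I would scale. Fix a large $L$, put $k=\lfloor p^{L}/m\rfloor$ and $s=p^{L}-km\in[0,m)$, and form from each representation a representation of $(kx,p^{L})$ by taking $k$ copies and padding with weight $s$ at $0$. Since $p^{L}X=X$, there is a common $y$ with $p^{L}y=kx$; applying convexity of $-g$ to the first padded representation, the pointwise bound $g(y)\le f(y)$, and convexity of $f$ to the second gives
\[
k\sum_i m_i g(x_i)+s\,g(0)\ \le\ p^{L}g(y)\ \le\ p^{L}f(y)\ \le\ k\sum_j m'_j f(x'_j)+s\,f(0).
\]
Dividing by $k$ leaves the error term $\tfrac{s}{k}\big(f(0)-g(0)\big)$, and since $s<m$ is bounded while $k\to\infty$ as $L\to\infty$, letting $L\to\infty$ removes it and yields $\sum_i m_i g(x_i)\le\sum_j m'_j f(x'_j)$.

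I expect the genuine obstacle to be exactly this two-representation comparison: semidivisibility supplies the midpoint $y$ only after one has passed to a $p$-power weight, so the argument must scale up, pay an unavoidable finite-weight padding error, and then discharge that error by a limit. Everything else is bookkeeping. The one point requiring care is the treatment of the values $\pm\infty$: the cases where $f$, $g$, or the desired $a$ are identically or partially infinite should be separated off at the start and handled directly through Proposition~\ref{prop infty} and the conventions $\infty-\infty=0\cdot\infty=\infty$, after which the homogenisation above applies in the finite regime.
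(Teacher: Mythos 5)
Your route is genuinely different from the paper's and, in the everywhere-finite case, it works. The paper proves Theorem~\ref{thm separate} by a Hahn--Banach-style transfinite argument: it shows that a non-affine pair $(g,f)$ can always be strictly improved at a single point $x_0$ (the dichotomy between its conditions (4.3) and (4.4), which is where semidivisibility is used to manufacture a common midpoint $z$ of two representations), and then applies Zorn's lemma to the ordered family of improved pairs. You instead homogenise to $\hat X=X\times\N\cup\{(0,0)\}$ (note that on $X\times(\N\cup\{0\})$ as written, $\hat f(x,0)=+\infty$ for $x\neq 0$, so you should restrict to this submonoid) and reduce to the subadditive Theorem~\ref{thm kaufman}. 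Your two-representation comparison $\hat g\le\hat f$ --- scale both representations by $k=\lfloor p^L/m\rfloor$ copies, pad with weight $s<m$ at $0$, pass through a common $y$ with $p^Ly=kx$, and let $L\to\infty$ to kill the error $\tfrac{s}{k}(f(0)-g(0))$ --- is correct and plays exactly the role that the midpoint $z$ plays in the paper; the concatenation argument for subadditivity of $\hat f$ and $-\hat g$ and the identification of additive functions on $\hat X$ with $\ell+mc$ are both sound. When $f$ and $g$ are real-valued this gives a finite affine separator, which is cleaner than the paper's conclusion and is essentially the content of the paper's subsequent Corollary for groups.

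The genuine gap is your last paragraph. The theorem allows $f,g:X\to[-\infty,\infty]$, and its conclusion is only a \emph{generalised} affine $a$ precisely because no finite separator need exist: the paper's own example after the proof (Figure~\ref{fig:root}, $f=-\sqrt{\cdot}$ extended by $+\infty$, $g(x)=-f(-x)$) has finite $f$ and $g$ on half-lines yet forces $a$ to take both values $\pm\infty$. Your reduction cannot reach this case: Theorem~\ref{thm kaufman} requires $[-\infty,\infty)$-valued data (so $\hat f$ must avoid $+\infty$, which fails as soon as $f$ does) and outputs a real-valued additive function, whereas the required separator may be unavoidably infinite on a set determined jointly by $f$ and $g$. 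Declaring that these cases are "separated off at the start and handled directly through Proposition~\ref{prop infty}" does not do this: that proposition is a descriptive statement about generalised affine functions on \emph{groups} (here $X$ is only a monoid), and it gives no construction of a separator whose infinite and finite loci match the geometry of $\mathrm{dom}(f)$ and $\{g>-\infty\}$. Deciding where $a$ must be $+\infty$, where $-\infty$, and where finite --- and verifying generalised affineness across those regions --- is the part of the theorem that the paper's pair-improvement argument is built to handle, and it is missing from your proposal. As written, you have proved the theorem under the additional hypothesis that $f$ and $g$ are everywhere finite.
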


We illustrate the two results in Figure \ref{fig:groups}.

\begin{proof}
First, since $f$, $-g$ are convex and $g\le f$, we have 
\begin{equation}\label{ineq conc}
\begin{aligned}
& mf(x) \ge \sum_{i=1}^n m_ig(x_i), &
\\& mx = \sum_{i=1}^n m_ix_i, ~~m =\sum_{i=1}^n m_i. &
\end{aligned}
\end{equation}
If $f=g$, then $f$ is generalised affine and the proof is complete. Assume then that there exists $x_0\in X$ and $r\in \R$ such that $f(x_0)>r>g(x_0)$. In such case, either we have
\begin{align}\label{case 1}
mf\left(x\right) \ge m_0r+\sum_{i=1}^nm_ig(x_i), 
\end{align} 
whenever we have
\begin{align*}
 mx =m_0x_0 + \sum_{i=1}^n m_ix_i, ~~m =m_0+\sum_{i=1}^n m_i,
\end{align*}
or else
\begin{align}\label{case 2} 
(m'-m_0') f(y)+m_0'r \ge \sum_{i=1}^{n'}m'_ig(y_i), 
\end{align} 
whenever we have
\begin{align*}
m_0'x_0+(m'-m_0')y =\sum_{i=1}^{n'} m'_iy_i, ~~  m' = \sum_{i=1}^{n} m'_i, ~~ m_0' \le m'.
\end{align*}
To see this, assume that neither~\eqref{case 1} nor~\eqref{case 2} hold. Multiplying~\eqref{case 1} by $m'$ and~\eqref{case 2} by $m$, we can find integers $m_0,\dots,m_n, m_0',\dots,m_{n'}'\in \N$ and elements $x_1,\dots,x_n,y_1,\dots,y_{n'}\in X$ satisfying
\begin{align}
\label{eq m} & m =m_0 + \sum_{i=1}^n m_i, ~~~ mx =m_0x_0+\sum_{i=1}^n m_ix_i,
\\
\label{eq m'} & m' = \sum_{i=1}^{n'} m'_i, ~~~ m_0' \le m', ~~~ m_0'x_0+(m'-m_0')y =\sum_{i=1}^{n'} m'_iy_i,
\end{align}
such that
\begin{align*}
m_0'\sum_{i=1}^nm_ig(x_i) + m_0\sum_{i=1}^{n'}m'_ig(y_i) & > m_0'm f(x)+ m_0(m'-m_0') f(y) 
\\ & \ge \big(m_0'm+m_0(m'-m_0')\big) f(z), 
\end{align*} 
where $z$ satisfies 
\begin{eqnarray}\label{prop z} 
\nonumber \big(m_0'm+m_0(m'-m_0')\big) z & = & m_0'mx+m_0(m'-m_0')y
\\\nonumber & \stackrel{\eqref{eq m}}{=} & m_0'm_0x_0+m_0'\sum_{i=1}^nm_ix_i+m_0(m'-m_0')y
\\ & \stackrel{\eqref{eq m'}}{=} & m_0\sum_{i=1}^{n'}m'_iy_i+m'_0\sum_{i=1}^nm_ix_i.
\end{eqnarray}
Such $z$ always exists since $X$ is semidivisible, i.e., $X = p^lX$ for some prime $p$ and $l\in \N$ and by Proposition~\ref{prop p sufficient} we may assume that $m_0'm+m_0(m'-m_0') = p^l$. Now, we have
\begin{eqnarray*}
m_0'\sum_{i=1}^n m_i+m_0\sum_{i=1}^{n'} m'_i & \stackrel{\eqref{eq m}\wedge\eqref{eq m'}}{=} & m'_0(m-m_0)+m_0m' 
\\ & = & m_0'm-m_0'm_0+m_0m' 
\\ & = & m_0'm + m_0(m'-m_0').
\end{eqnarray*}
Hence, we have
\begin{eqnarray}\label{contradict}
\nonumber \big(m_0'm + m_0(m'-m_0')\big)f(z) & \stackrel{(*)}{\ge} &\big(m_0'm + m_0(m'-m_0')\big) g(z) 
\\& \stackrel{(**)}{\ge} & m_0'\sum_{i=1}^n m_ig(x_i) + m_0\sum_{i=1}^{n'} m'_ig(y_i), 
\end{eqnarray} 
where in ($*$) we used the fact that $g\le f$ and in ($**$) we used the fact that $g$ is concave. Now,~\eqref{contradict} is a contradiction to~\eqref{ineq conc}. Thus, we must have that either~\eqref{case 1} or~\eqref{case 2} hold. Assume first that~\eqref{case 1} holds. Define 
\begin{align}\label{def h}
h(x) = \sup\left[\frac 1 {k}\left(k_0r+\sum_{i=1}^nk_ig(x_i)\right)\right],
\end{align} 
where the supremum is taken over all $k,k_0,k_1,\dots,k_n\in \N$ and $y_1,\dots,y_n\in X$ such that $kx = k_0x_0+\sum_{i=1}^nk_iy_i$ and $k= k_0+\sum_{i=1}^nk_i$. By choosing $k_1=\dots=k_n=0$, we have $h(x_0) \ge r > g(x_0)$. Since $g$ is concave we also have that $h\ge g$, and by~\eqref{case 1} it follows that $h \le f$. Next, we would like to show that $h$ is concave, and that~\eqref{ineq conc} holds for $h$ instead of $g$. To show the concavity, let $m_1,\dots,m_n\in \N$, and $x_1,\dots,x_n,x\in X$ such that $mx = \sum_{i=1}^nm_ix_i$ and $m=\sum_{i=1}^nm_i$. Let $\epsilon>0$, and for each $1\le i \le n$, choose $k_i, k_{i,0},\dots,k_{i,n_i}\in \N$ and $y_{i,1},\dots,y_{i,n'}\in X$ such that $k_ix = k_{i,0}x_0+\sum_{j=1}^{n_i}k_{i,j}y_{i,j}$, $k_i=k_{i,0}+\sum_{j=1}^{n_i}k_{i,j}$ such that 
\begin{align}\label{ineq eps}
k_ih(x_i)-\frac{k_i\epsilon}{m} \le k_{i,0}r+\sum_{j=1}^{n_i}k_{i,j}g(y_{i,j}).
\end{align}
Now, we have
\begin{align*}
\left(m\prod_{i=1}^nk_i\right)x & = \sum_{i=1}^n\left(m_i\prod_{j\neq i}k_j\right)k_ix_i 
\\ & =  \sum_{i=1}^n\left(m_i\prod_{j\neq i}k_j\right)\left(k_{i,0}x_0+\sum_{j=1}^{n_i}k_{i,j}y_{i,j}\right)
\\ & = \sum_{i=1}^n\left(m_ik_{i,0}\prod_{j\neq i}k_j\right)x_0+\sum_{i=1}^n\left(m_i\prod_{j\neq i}k_j\right)\left(\sum_{j=1}^{n_i}k_{i,j}y_{i,j}\right).
\end{align*}
Also, we have
\begin{align*}
m\prod_{i=1}^nk_i = \sum_{i=1}^n\left(m_ik_{i,0}\prod_{j\neq i}k_j\right) + \sum_{i=1}^n\left(m_i\prod_{j\neq i}k_j\right)\sum_{j=1}^{n_i}k_{i,j}.
\end{align*}
Thus, by the definition of $h$~\eqref{def h}, we have
\begin{eqnarray*}
&& mh(x) \ge \frac{1}{\prod_{i=1}^nk_i}\left[ \sum_{i=1}^n\left(m_ik_{i,0}\prod_{j\neq i}k_j\right)r + \sum_{i=1}^n\left(m_i\prod_{j\neq i}k_j\right)\sum_{j=1}^{n_i}k_{i,j}g(y_{i,j})\right]
\\ && = \frac{1}{\prod_{i=1}^nk_i}\sum_{i=1}^nm_i\prod_{j\neq i}k_j\left[k_{i,0}r + \sum_{j=1}^{n_i}k_{i,j}g(y_{i,j})\right]
\\ && \stackrel{\eqref{ineq eps}}{\ge}  \frac{1}{\prod_{i=1}^nk_i}\sum_{i=1}^n\left(m_i\prod_{j\neq i}k_j\right)\left(k_ih(x_i)-\frac{k_i\epsilon}{m}\right)
\\ && = \sum_{i=1}^nm_ih(x_i)-\epsilon.
\end{eqnarray*}
Since $\epsilon$ is arbitrary, it follows that $h$ is concave. Finally, we would like to show that if $mx = \sum_{i=1}^nm_ix_i$, $m=\sum_{i=1}^n$, then $\sum_{i=1}^nm_ih(x_i) \le mf(x)$. This follows from the fact that $h$ is concave together with the fact that $h\le f$. The existence and the properties of $h$ show that $g$ is \emph{not} the maximal element in the class of all concave functions that satisfy~\eqref{ineq conc}. Analogously, if~\eqref{case 2} holds, define 
\begin{align}\label{def k}
h'(x) = \inf\left[ \frac 1 {k'} \big(k'_0r +(k'-k_0')f(y)\big)\right],
\end{align}
where the infimum is taken over all $k_0'\ge 0$, $k'\in \N$, and $y\in X$ such that $k'x = k_0'x_0+(k'-k_0')y$. If $k'=k_0'$ we define the right side of~\eqref{def k} to be $r$. Choosing $k'=k_0'$ gives $h'(x_0) \le r < f(x_0)$ and choosing $k_0=0$ gives $h'(x) \le f(x)$ for all $x\in X$. Since~\eqref{case 2} holds and $g$ is concave, we also have that $g\le h'$ and~\eqref{ineq conc} holds with $h'$ instead of $f$. Also, in an analogous way to the previous case, one can show that $h'$ is convex. To conclude the proof, define the following ordered set $\mathcal D$ of all pairs of the form $(h,h')$, where $h$ is concave, $h'$ is convex, and~\eqref{ineq conc} holds if we replace $g$ by $h$ or $f$ by $h'$. Define the partial order on $\mathcal D$ to be $(h,h') \le (w,w') \iff h\le w \text{ and } w'\le h'$. Since $(g,f)\in \mathcal D$, this chain is non-empty and therefore has a maximal element. By the above consideration we conclude  the maximal element is generalised affine.
\end{proof}

\begin{remark}
Note that we used the semidivisibility only to show that either~\eqref{case 1} or~\eqref{case 2} must hold. We did not use this fact again in the proof.
\qede
\end{remark}

\begin{remark}
Similarly, the results  hold if we work in a semimodule. \qede
\end{remark}

\begin{figure}[t]
\begin{center}
\begin{tikzpicture}
  \draw[draw=red!50!white, fill=red!50!white] plot[smooth, samples=100, domain=-2.5:0](\x,{-sqrt(-\x)}) -- plot[smooth, samples=100, domain=0:-2.5] (\x,{2.5});
  \draw plot[smooth, samples=100, domain=-2.5:0](\x,{-sqrt(-\x)});
  \draw[draw=blue!50!white, fill=blue!50!white] plot[smooth, samples=100, domain=0:2.5](\x,{sqrt(\x)}) -- plot[smooth, samples=100, domain=2.5:0] (\x,{-2.5});
  \draw plot[smooth, samples=100, domain=0:2.5](\x,{sqrt(\x)});
  \draw[-] (0,-2.5) -- (0,2.5);
  \node at (0,0) {\textbullet};  
\end{tikzpicture}
\end{center}
\caption{Failure of finite affine separation}\label{fig:root}
\end{figure}
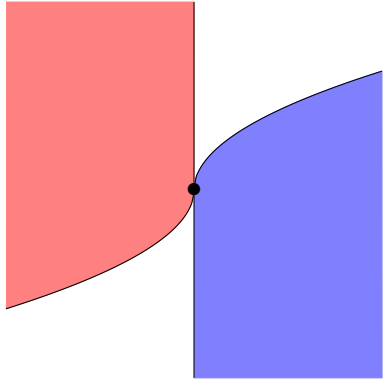

\begin{remark}
In general we cannot expect the affine function $a$ to be better than generalised affine in Theorem~\ref{thm separate}, even if $X$ is a vector space. This is illustrated by the example  of $f(x) = -\sqrt x$ if $x\ge 0$ and $f(x) = -\infty$ if $x<0$ and $g(x) = -f(-x)$, where $X=\R$. The only separator comes from letting $a$ to be $+\infty$ when $x>0$, $-\infty$ when $x<0$ and $0$ when $x=0$. See Figure \ref{fig:root}. \qede
\end{remark}

On the other hand, using Proposition~\ref{prop infty}, we have the following.

\begin{cor}
Assume that $X$ is a group. If either $f$ or $g$ is everywhere finite and the other function is somewhere finite, then $a$ is finite and affine.
\end{cor}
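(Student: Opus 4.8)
The plan is to combine Theorem~\ref{thm separate} with the structural dichotomy of Proposition~\ref{prop infty}. Theorem~\ref{thm separate} already produces a generalised affine function $a$ with $g \le a \le f$; the only thing left to rule out is that $a$ takes the values $\pm\infty$. Since $a$ is generalised affine on a group, Proposition~\ref{prop infty} tells us that $a$ falls into exactly one of four cases: $a$ is everywhere finite, $a \equiv +\infty$, $a \equiv -\infty$, or $a$ attains both $+\infty$ and $-\infty$. The goal is to show that under the hypothesis (one of $f,g$ everywhere finite, the other somewhere finite) only the first case is possible, whence $a$ is finite, and a finite generalised affine function is genuinely affine.

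First I would use the sandwich $g \le a \le f$ together with the finiteness hypotheses to kill the three infinite cases. Suppose without loss of generality that $f$ is everywhere finite and $g$ is finite at some point $x_*$ (the symmetric case is handled by the same argument applied to $-g \le -a \le -f$). From $a \le f$ and $f$ finite everywhere, we immediately get $a(x) < +\infty$ for every $x \in X$; this already excludes the case $a \equiv +\infty$ and the case where $a$ attains $+\infty$ anywhere, in particular the mixed case. That leaves the possibility $a \equiv -\infty$, which I would eliminate using the point $x_*$: since $g \le a$, we have $-\infty < g(x_*) \le a(x_*)$, so $a(x_*) > -\infty$, contradicting $a \equiv -\infty$.

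Having ruled out all three infinite alternatives, Proposition~\ref{prop infty} forces $a$ to be everywhere finite. It then remains to observe that an everywhere-finite generalised affine function is affine in the sense of the earlier definition: a generalised affine function is by definition one that is both convex and concave, and when it never attains $\pm\infty$ the defining inequalities $rf(x) \le \sum r_i f(x_i)$ and $rf(x) \ge \sum r_i f(x_i)$ combine to the equality $rf(x) = \sum r_i f(x_i)$, which is exactly the definition of an affine function. This gives the desired conclusion that $a$ is finite and affine.

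I do not expect any serious obstacle here, since the real work was done in Theorem~\ref{thm separate} and Proposition~\ref{prop infty}; this corollary is essentially a bookkeeping argument. The one point to be slightly careful about is the reduction ``without loss of generality,'' namely checking that the roles of $f$ and $g$ are genuinely symmetric under $x \mapsto -x$ of the functions: the hypothesis that $-g$ is convex matches the hypothesis that $f$ is convex, so replacing $(f,g,a)$ by $(-g,-f,-a)$ swaps the two finiteness assumptions while preserving the generalised-affine conclusion, and a finite $-a$ is affine exactly when $a$ is.
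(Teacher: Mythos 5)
Your proposal is correct and follows exactly the route the paper intends: the corollary is stated immediately after Proposition~\ref{prop infty} with the remark that it follows from that proposition, and your case analysis (using $a\le f<+\infty$ everywhere to exclude $a$ ever taking the value $+\infty$, and $g(x_*)>-\infty$ to exclude $a\equiv-\infty$, with the symmetric reduction for the other configuration) is precisely the intended bookkeeping. Nothing is missing.
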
 

The vector space version of the following result is used in \cite{Hol75} as the basis for Hahn-Banach theory. Once established, one imposes additional core conditions on $A,B$ to show  ${\rm cl}~C \cap{\rm  cl}~ D$ is a separating half-space. Here one uses the algebraic closure. We take a different (more modern) approach in the next section.

\begin{cor}[Stone's lemma for monoids]
Assume that $X$ is a semidivisible monoid and $A,B\subseteq X$ are disjoint convex sets. Then there exist $C,D\subseteq X$ disjoint and convex such that $A\subseteq C$, $B\subseteq D$ and $C\cup D = X$.
\end{cor}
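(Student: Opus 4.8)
The plan is to run Zorn's lemma on pairs of competing convex sets. I would consider the partially ordered set $\mathcal P$ of all pairs $(C,D)$ for which $C,D\subseteq X$ are convex and disjoint with $A\subseteq C$ and $B\subseteq D$, ordered by $(C,D)\preceq(C',D')$ if and only if $C\subseteq C'$ and $D\subseteq D'$. It is nonempty because $(A,B)\in\mathcal P$. Given a chain in $\mathcal P$, I would take the componentwise union as an upper bound: the union of a chain of convex sets is convex, since a convexity is closed under directed unions (Definition~\ref{def convexity}) and a chain is directed, while disjointness is inherited because any point common to $\bigcup C_i$ and $\bigcup D_j$ would already lie in $C_k\cap D_k$ for a single comparable index $k$. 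Zorn's lemma then yields a maximal pair $(C,D)$.

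The heart of the matter is to show $C\cup D=X$ for this maximal pair. Suppose not, and pick $z\in X\setminus(C\cup D)$. If $\conv(C\cup\{z\})$ were still disjoint from $D$, then $(\conv(C\cup\{z\}),D)$ would strictly dominate $(C,D)$ in $\mathcal P$, contradicting maximality; the symmetric argument rules out $\conv(D\cup\{z\})\cap C=\emptyset$. Hence I may fix points $w\in\conv(C\cup\{z\})\cap D$ and $v\in\conv(D\cup\{z\})\cap C$. Using the explicit hull of Proposition~\ref{hull monoid}, I write $mw=m_0z+\sum_i m_ic_i$ with $m=m_0+\sum_i m_i$ and $c_i\in C$, and $m'v=m_0'z+\sum_j m_j'd_j$ with $m'=m_0'+\sum_j m_j'$ and $d_j\in D$. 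A short check gives $m_0>0$ (else $w\in\conv(C)=C$, contradicting disjointness) and, likewise, $m_0'>0$ as well as $\sum_i m_i,\sum_j m_j'>0$.

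I would then eliminate $z$ by cross-multiplying: scaling the first identity by $m_0'$ and the second by $m_0$ equalises the coefficient of $z$ to $m_0m_0'$, and adding $m_0\sum_j m_j'd_j$ to the first and $m_0'\sum_i m_ic_i$ to the second produces a single element $t$ expressed in two ways, as a weighted sum of points of $D$ and as a weighted sum of points of $C$, with the \emph{same} total weight $W=m_0m_0'+m_0'\sum_i m_i+m_0\sum_j m_j'$. Any $u\in X$ with $Wu=t$ would then satisfy $u\in\conv(C)\cap\conv(D)=C\cap D$, the desired contradiction, completing the proof that $C\cup D=X$ and hence the corollary. The main obstacle is exactly the existence of such a $u$: because $X$ is only $p$-semidivisible, one can divide by $p$ (hence by any power of $p$) but not by an arbitrary integer $W$. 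This is precisely the point at which semidivisibility must be exploited as in Theorem~\ref{thm separate}, by arranging, via the center-padding device of Proposition~\ref{prop p sufficient}, that the effective total weight is a power of $p$, so that $X=p^lX$ supplies the common element $u$. Turning this weight reduction into a rigorous argument, rather than the surrounding Zorn scaffolding, is where the real work lies.
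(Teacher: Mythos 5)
Your route is genuinely different from the paper's: the paper deduces the corollary in two lines from Theorem~\ref{thm separate}, applied to the indicator functions $f=\iota_A$ and $g=-\iota_B$, and then takes $C=\{a<0\}$, $D=\{a\ge 0\}$ for the resulting generalised affine separator; you instead run the classical Stone/Kakutani argument directly on pairs of convex sets. Your Zorn scaffolding is fine (directed unions of convex sets are convex by Definition~\ref{def convexity}), and the cross-multiplication producing an element $t$ with two weight-$W$ representations, one over $D\cup\{w\}$ and one over $C\cup\{v\}$, is correct as far as it goes. (The side claim that $\sum_i m_i,\sum_j m_j'>0$ is neither justified by your argument nor needed for it.)

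The gap you flag at the end is, however, a genuine one, and the repair you gesture at does not work. You need $u\in X$ with $Wu=t$, where $W=m_0m_0'+m_0'\sum_i m_i+m_0\sum_j m_j'$; semidivisibility only gives solvability of $p^lu=t$, and $W$ need not be a power of $p$. Rescaling the two hull representations by $k$ and $k'$ only replaces $W$ by $kk'W$, so $W$ cannot be brought to a $p$-power unless it already divides one; for instance in $X=\Z[1/2]$ (which is $2$-semidivisible) the equation $3u=t$ is genuinely unsolvable for most $t$. The padding device of Proposition~\ref{prop p sufficient} does not transfer to this setting: there one pads the representation with $p^l-r$ copies of $x$ itself and then cancels the superfluous $f(x)$-terms in the ordered range group, whereas here padding a representation of $t$ up to total weight $p^l$ requires inserting copies of an element belonging to the convex set in question, and to keep \emph{both} representations of $t$ valid simultaneously that element would have to lie in $C\cap D=\emptyset$ (padding with $c\in C$ yields $u\in C$ only, padding with $d\in D$ yields $u\in D$ only, and the two padded elements differ). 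This is precisely why the paper routes the corollary through Theorem~\ref{thm separate}: there the weight reduction is performed on scalar inequalities, where such cancellations are legitimate, rather than on set memberships, where they are not. As written, your proof is therefore incomplete at its only substantive step.
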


\begin{proof}
Let $f = \iota_A$, $g=-\iota_B$, where 
\[\iota_A(x) = \begin{cases}0& x\in A \\ \infty & x\notin A\end{cases},\]
and similarly for $\iota_B$. Then $f,-g:X\to [-\infty,\infty]$ are convex. Use Theorem~\ref{thm separate} to deduce the existence of a generalised affine function $a:X\to [-\infty,\infty]$ with $-\iota_B \le a \le \iota_A$. Choosing 
\[C = \big\{x\in X~\big|~ a(x)<0\big\}, ~~D = \big\{x\in X~\big|~ a(x)\ge 0\big\},\] 
concludes the proof.
\end{proof}

Theorem~\ref{thm separate} also implies the following.

\begin{cor}
Assume that $X$ is semidivisible monoid and $f:X\to [-\infty,\infty]$ is convex. Then $f$ is the supremum over its generalised affine minorants.
\end{cor}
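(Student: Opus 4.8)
The plan is to show that $f$ equals the pointwise supremum of all generalised affine functions $a$ satisfying $a \le f$. The inequality $\sup_a a \le f$ is immediate, since each such minorant lies below $f$ by definition. The substance is the reverse inequality: for each fixed point $x_0 \in X$ and each real value $r < f(x_0)$, I must produce a generalised affine minorant $a$ of $f$ with $a(x_0) \ge r$. Taking the supremum over all such $r$ then forces the supremum of the minorants to reach $f(x_0)$, and since $x_0$ is arbitrary, the result follows.

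The natural strategy is to invoke Theorem~\ref{thm separate} with a suitable pair of functions. Given $x_0$ and $r < f(x_0)$, I would define a concave function $g$ that sits below $f$ everywhere but takes the value $r$ at $x_0$, so that any generalised affine interpolant between $g$ and $f$ automatically satisfies $a(x_0) \ge g(x_0) = r$ and $a \le f$. The cleanest choice is to set $g$ to be $r$ at $x_0$ and $-\infty$ elsewhere; this is trivially concave (its negative, which is $-r$ at $x_0$ and $+\infty$ elsewhere, is an indicator-type convex function), and it satisfies $g \le f$ provided $r \le f(x_0)$, which holds by assumption. With $f$ convex and $-g$ convex, Theorem~\ref{thm separate} applies directly to yield a generalised affine $a$ with $g \le a \le f$.

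The resulting $a$ is a generalised affine minorant of $f$ with $a(x_0) \ge r$, exactly what is needed. Taking the supremum first over all generalised affine minorants and then noting that $r < f(x_0)$ was arbitrary gives $(\sup_a a)(x_0) \ge f(x_0)$, and combined with the trivial inequality this yields equality at $x_0$.

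The main obstacle I anticipate is verifying that the single-point function $g$ genuinely satisfies the hypotheses of Theorem~\ref{thm separate} in this extended-real setting, where the conventions $\infty - \infty = 0 \cdot \infty = \infty$ are in force and $g$ may take the value $-\infty$. One must confirm that $-g$ is convex as a map into $[-\infty,\infty]$ under these conventions, and that the inequality $g \le f$ holds pointwise even where $f(x_0)$ might itself be $+\infty$ (in which case any finite $r$ is admissible). If one prefers to avoid the degenerate single-point construction, an alternative is to take any generalised affine $g_0 \le f$ that is already known and perturb it, but the indicator-style choice is the most economical and fits the infinite-valued framework the theorem was designed to handle.
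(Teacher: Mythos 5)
Your argument is essentially the paper's own proof: the paper likewise takes the single-point concave function equal to a chosen value at the given point and $-\infty$ elsewhere, and applies Theorem~\ref{thm separate} to produce a generalised affine minorant exceeding that value there (phrased as a contradiction with the supremum rather than as a direct computation over all $r<f(x_0)$). The concern you flag about whether this indicator-type $g$ is genuinely concave in a general semidivisible monoid (it requires controlling points $x$ with $mx=mx_0$ but $x\neq x_0$) applies equally to the paper's proof, which simply asserts concavity without comment.
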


\begin{proof}
Clearly we have
\begin{align}\label{ineq always}
f(x) \ge \sup\big\{a(x)~\big|~a \le f,~a\text{ is affine}\big\}.
\end{align}
To show that equality holds, assume to the contrary that we have a strict inequality in~\eqref{ineq always}. The function $g$ which equals the supremum at $x$ and $-\infty$ everywhere else is concave. By Theorem~\ref{thm separate}, there exists an affine function $a$ such that 
\begin{align*}
\sup\big\{a(x)~\big|~a \le f,~a\text{ is affine}\big\} < a(x) < f(x),
\end{align*}
which is a contradiction.
\end{proof}
 
\begin{example}[Non separation]
In the non-divisible setting, Theorem~\ref{thm separate} fails even  for everywhere finite functions. Take for example $X=\Z^2$. Let $A=\conv_{\R^2}\big(\big\{(0,2), (1,0)\big\}\big)$ and $B=\conv_{\R^2}\big(\big\{(0,1), (2,0)\big\}\big)$, and 
\begin{align*}
& f(x) = 2\sqrt 5d_A(x)-1,
\\
& g(x) = -2\sqrt 5d_B(x)+1.
\end{align*}
where $d_A(x) = \inf_{a\in A}\|x-a\|_{\R^2}$. Note that for every $x\in \Z^2$ such that $x\notin A$, we have $d_A(x) \ge \frac 1 {\sqrt 5}$. Similarly, if $x\in \Z^2$ and $x\notin B$, we have $d_B(x) \ge \frac 1 {\sqrt 5}$. For every $x\in \Z^2$, either $x\notin A$ or $x\notin B$ and so $d_A(x)+d_B(x) \ge \frac 1 {\sqrt 5}$. Hence,
\begin{align*}
f(x)-g(x) = 2\sqrt 5 \big(d_A(x)+d_B(x)\big)-2 \ge 0,
\end{align*}
and so $g\le f$ on $\Z^2$. Also, $f$ and $-g$ are convex, since they are convex on all of $\R^2$ (the distance to a convex set in a vector space is a convex function). Assume that $a$ is affine and satisfies $g\le a \le f$. By the choice of $f$ and $g$, $a$ has to be finite everywhere. Since $a$ is affine, we can write $a(m_1,m_2) = c+\alpha_1m_1+\alpha_2m_2$, where $c,\alpha_1, \alpha_2\in \R$. Since $a \le f$, we can choose $x=(0,2)$ and $x=(1,0)$ and obtain
\begin{align*}
c+2\alpha_2 \le -1, ~~ c+\alpha_1\le -1.
\end{align*}
Similarly, since $a \ge g$ we get
\begin{align*}
c+2\alpha_1 \ge 1, ~~ c+\alpha_2 \ge 1.
\end{align*}
Altogether, we get both $c \le -3$ and $c\ge 3$. 
\qede
\end{example}

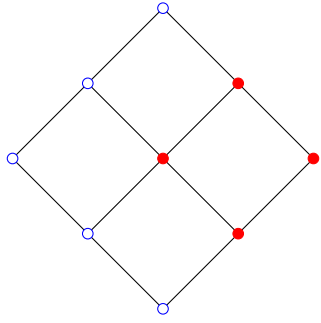
\begin{figure}[t]
\begin{center}
\begin{tikzpicture}
  \draw[-] (-2,0) -- (0,2);
  \draw[-] (0,2) -- (2,0);
  \draw[-] (2,0) -- (0,-2);
  \draw[-] (-2,0) -- (0,-2);
  \draw[-] (1,1) -- (-1,-1);
  \draw[-] (1,-1) -- (-1,1);
  \filldraw [red](2,0) circle (2pt);
  \filldraw [red](0,0) circle (2pt);
  \filldraw [red](1,1) circle (2pt);
  \filldraw [red](1,-1) circle (2pt);
  \draw[blue, fill=white] (-2,0) circle (2pt);
  \draw[blue, fill=white] (-1,-1) circle (2pt);
  \draw[blue, fill=white] (-1,1) circle (2pt);
  \draw[blue, fill=white] (0,-2) circle (2pt);
  \draw[blue, fill=white] (0,2) circle (2pt);  
\end{tikzpicture}
\end{center}
\caption{Convex separation in a lattice}\label{fig:lattice}
\end{figure}

\begin{example}
Let $(X,\wedge)$ be a semimodule induced by a semilattice $X$. This is divisible since $x\wedge x = x$. Thus, $\conv(S)$ is the sub semilattice generated by $S$. In this case convex and subadditive functions coincide, and so Theorems~\ref{thm kaufman} and~\ref{thm separate} both assert the un-obvious result that disjoint sub meet-lattices lie in partitioning sublattices. See Figure~\ref{fig:lattice}. Note that since $X$ contains nontrivial idempotent elements, it cannot be embedded in a group (see~\cite{Ham05}). See also~\cite{Pon14} for a study of convexity in semilattices.  \qede
\end{example}

\section*{Part II: Convex operators on groups}

\section{Analysis of convex operators on groups}\label{sec operators}

We turn now to results for operators on groups. By Example~\ref{ex r q} and Remark~\ref{rmk divisible}, we could derive many of these results using $\Q$-modules but we prefer to highlight the use of only monoidal structure.

\subsection{Subdifferential calculus of operators}

Here we assume that $X$, $Y$ are groups, and $f :X\to (Y\cup\{\infty\},\le)$, where $\infty$ is a maximal element with respect to the partial order $\le$ on $Y$. Assume also that $\le$ is compatible with the group operation, i.e., if $x\ge y$ iff $x-y\ge 0$. We  also assume that the order is at least \emph{inductive}, i.e., that every countable chain has an upper bound. In Subsection~\ref{subsec fenchel}, we will need to further assume that $\le$ is a \emph{complete} order, i.e., that every order bounded set has an infimum and supremum. Of course $Y$ may be $\R$ as before.

\begin{remark}A partial order in a Banach space is order complete if and only it is latticial. Moreover, order completeness of the range   characterises the Hahn-Banach extension theorem holding. By contrast if the cone has a bounded complete base, the order is inductive. Thus, in Euclidean space all pointed closed convex cones induce inductive orders.    (See \cite{BV10,Bor82,BPT84,BT92} for much more on these technicalities in the vector space setting.)\qede 
\end{remark}

As in Definition~\ref{def subadd}, $f$ is said to be \emph{subadditive} if $f(x+y) \le f(x)+f(y)$. We can similarly define $\N$-sublinear and convex functions.  
\begin{definition}[Domain of convex function]
Let $X,Y$ be groups and $f:X\to Y\cup\{\infty\}$ be convex. Define the domain of $f$ to be the set
\begin{align*}
\mathrm{dom}(f)  = \big\{x\in X ~ \big| ~ f(x)<\infty\big\}.
\end{align*}
\end{definition}

It is easily shown that the domain of a convex function of a convex subset of $X$. The \emph{core} of the domain is then:

\begin{definition}[Core of domain]
Let $X,Y$ be groups and let $f:X\to Y\cup\{\infty\}$ be a convex function. Define the core of the domain of $f$ to be 
\begin{align*}
\mathrm{core}(\mathrm{dom}(f)) = \big\{x\in X~\big|~\forall h\in X, \exists n\in \N, \exists g\in X, ng =  h, f(x+g)<\infty\big\}.
\end{align*}
\end{definition} 

By choosing $h=0$, it follows that $\mathrm{core}(\mathrm{dom}(f)) \subseteq \mathrm{dom}(f)$. More generally, we can define the core of a convex function.

\begin{definition}[Core of convex set]\label{def cor of set}
Let $X$ be a group and $C\subseteq X$ a convex set. Define the core of $C$ to be the set
\begin{align*}
\mathrm{core}(C) = \big\{x\in X~\big|~ \forall h\in X, \exists n\in \N, \exists g\in X, ng = h, x+g\in C\big\}.
\end{align*}
\end{definition}

Again, we have $\mathrm{core}(C)\subseteq C$. Now we define the directional derivative.

\begin{definition}[Directional derivative]\label{def dir der}
Let $X$ be a group, $(Y\cup\{\infty\})$ a group with an inductive order, and $f:X\to Y\cup\{\infty\}$ a convex function. For $x\in \mathrm{core}(\mathrm{dom}(f))$, define
\begin{align*}
f_x(h) = \inf\big\{ n\big(f(x+g)-f(x)\big) ~\big|~ ng = h, ~f(x+g)<\infty\big\}.
\end{align*}
\end{definition}

Before proceed to the study of directional derivatives, we need the following technical proposition.

\begin{prop}\label{prop decreasing}
Assume that $\{a_n\}_{n\in \N}$ and $\{b_n\}_{n\in \N}$ are two decreasing sequences in an inductive and compatible cone. Then
\begin{align*}
\inf_{n\in \N}\big\{a_n+b_n\big\} = \inf_{n\in \N} a_n+\inf_{n\in \N} b_b.
\end{align*}
\end{prop}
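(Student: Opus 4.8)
The plan is to show that $\alpha + \beta$, where $\alpha = \inf_n a_n$ and $\beta = \inf_n b_n$, is precisely the greatest lower bound of the set $S = \{a_n + b_n \mid n \in \N\}$; this simultaneously yields the asserted equality and the existence of $\inf_n\{a_n + b_n\}$, the existence of $\alpha$ and $\beta$ being presupposed by the statement. Every step must be carried out using only the compatibility of the order --- that is, translation invariance, so that $x \le y \iff x + z \le y + z$, equivalently $c \le a + b \iff c - a \le b$ --- together with the defining property of the infimum as greatest lower bound. At no point may one invoke comparability of arbitrary elements, since the order is merely partial.

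First I would establish the easy inequality. For every $n$ we have $\alpha \le a_n$ and $\beta \le b_n$, and adding these via compatibility gives $\alpha + \beta \le a_n + b_n$. Hence $\alpha + \beta$ is a lower bound for $S$.

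The substance lies in showing it is the \emph{greatest} lower bound, and this is where monotonicity is indispensable, since without it the inequality can be strict. Let $d$ be any lower bound of $S$, so $d \le a_n + b_n$ for all $n$. The key device is to decouple the two sequences by reindexing: given arbitrary $m, k \in \N$, put $n = \max\{m,k\}$; since both sequences are decreasing, $a_n \le a_m$ and $b_n \le b_k$, so by compatibility $d \le a_n + b_n \le a_m + b_k$. Thus $d \le a_m + b_k$ for all $m,k$. Now fix $m$; rewriting as $d - a_m \le b_k$ for all $k$ exhibits $d - a_m$ as a lower bound of $\{b_k\}$, whence $d - a_m \le \beta$, i.e. $d \le a_m + \beta$ for every $m$. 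Rearranging once more to $d - \beta \le a_m$ gives $d - \beta \le \alpha$, and therefore $d \le \alpha + \beta$. This proves $\alpha + \beta$ is the greatest lower bound of $S$, completing the argument.

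I expect the only real obstacle to be bookkeeping rather than anything conceptual: one must resist the total-order intuition and check that each passage to an infimum is justified by the greatest-lower-bound property alone, and that each rearrangement is a legitimate use of compatibility. The reindexing $n = \max\{m,k\}$ is the single genuinely necessary idea, as it is exactly what lets monotonicity substitute for the false general inequality. Inductiveness of the order is not needed for the equality itself; it (or the completeness invoked later) serves only to guarantee that the relevant infima exist in the intended application to directional derivatives.
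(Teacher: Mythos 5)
Your proof is correct and follows essentially the same route as the paper's: both exploit the monotonicity of the sequences to decouple the two indices (you via $n=\max\{m,k\}$, the paper by fixing $m$ and comparing with the tail $n>m$) and then pass the infima through one at a time using compatibility of the order. Your greatest-lower-bound framing is marginally more careful in that it also establishes the existence of $\inf_{n}\{a_n+b_n\}$, which the paper's proof takes for granted, but the underlying argument is the same.
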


\begin{proof}
Let $n,m\in N$ with $n>m$. Then since $\{b_n\}_{n\in\N}$ is decreasing, we have $a_n+b_n \le a_n+b_m$. Thus, we have
\begin{align*}
\inf_{n\in \N}\big\{a_n+b_n\big\} \le \big[\inf_{n\in \N} a_n\big]+b_m.
\end{align*}
Taking the infimum over $m$ gives $\inf_{n\in \N}\big\{a_n+b_n\big\} \le \inf_{n\in \N} a_n+\inf_{m\in \N} b_m$. The converse inequality is clear. This completes the proof.
\end{proof}

We have the following.
\begin{prop}[One-sided derivatives, I]\label{prop derivative}
Assume that a group $X$ is a $p$-semidivisible group, and $(Y,\le)$ is a group with an inductive order. Assume also that $f:X\to Y\cup\{\infty\}$ is convex and $x\in \mathrm{core}(\mathrm{dom}(f))$. Then $f_x$ is an everywhere finite, $\N$-sublinear function.
\end{prop}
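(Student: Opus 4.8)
The plan is to transcribe the classical argument that the one–sided directional derivative of a convex function is finite and sublinear, replacing the real steps $t\downarrow 0$ by the denominators $n\in\N$ appearing in Definition~\ref{def dir der}. Two features absent from vector spaces must be dealt with: one cannot divide by an arbitrary integer, and $X$ may contain torsion. The first is exactly what $p$--semidivisibility repairs; the second I would handle with a lemma stating that a convex $f$ is constant along torsion cosets. I expect the torsion to be the genuine obstacle, since the naive vector-space manipulation ``$x+g=\tfrac1m(x+mg)+\cdots$'' silently assumes that $mg$ determines $g$.

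I would begin with a reduction to prime-power denominators, the derivative analogue of Proposition~\ref{prop p sufficient}: for every $h\in X$,
\[ f_x(h)=\inf\bigl\{\,p^l\bigl(f(x+g)-f(x)\bigr)\ \big|\ p^lg=h,\ f(x+g)<\infty\,\bigr\}. \]
Indeed, given any $ng=h$ with $f(x+g)<\infty$, pick $l$ with $p^l\ge n$ and, using $X=p^lX$, some $g''$ with $p^lg''=h$; then $p^l(x+g'')=(p^l-n)x+n(x+g)$ is a convex combination, so $p^l\bigl(f(x+g'')-f(x)\bigr)\le n\bigl(f(x+g)-f(x)\bigr)$, which gives the displayed identity. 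Call it $(\star)$. Next I would prove the torsion lemma: if $kt=0$ for some $k\in\N$, then $f(x+t)=f(x)$ for every $x\in\dom(f)$. This follows from convexity applied to $k(x+t)=(k-1)x+x$ and to $2x=(x+t)+(x-t)$, which force $k\,f(x+t)=k\,f(x)$; cancelling $k$ (legitimate since the ordered value groups considered here are torsion free) gives the claim.

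With these tools the three assertions follow. For finiteness, $x\in\core(\dom f)$ makes the infimum set nonempty, so $f_x(h)<\infty$; for a lower bound, choose $n_0g_0=-h$ with $f(x+g_0)<\infty$, so that any representation $ng=h$ satisfies $ng+n_0g_0=0$ and hence $(n+n_0)x=n(x+g)+n_0(x+g_0)$, whence convexity yields $n\bigl(f(x+g)-f(x)\bigr)\ge -n_0\bigl(f(x+g_0)-f(x)\bigr)$, a bound independent of $(n,g)$. Positive homogeneity $f_x(mh)=m\,f_x(h)$: the inequality ``$\le$'' is immediate from $(mn)g=mh$; for ``$\ge$'' take a $p$-power representation $p^lg=mh$ from $(\star)$, choose $g'$ with $p^lg'=h$ (possible by semidivisibility), note $g-mg'$ is killed by $p^l$ hence torsion, so $f(x+g)=f(x+mg')$ by the lemma, and apply convexity to $m(x+g')=(m-1)x+(x+mg')$. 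Subadditivity is analogous: taking common $p$-power representations $p^lg_i=h_i$ and $g$ with $p^{l+1}g=h_1+h_2$, the element $t=pg-g_1-g_2$ is torsion, and convexity applied to $p(x+g)=(p-2)x+(x+g_1)+(x+g_2+t)$ together with $f(x+g_2+t)=f(x+g_2)$ gives $p\bigl(f(x+g)-f(x)\bigr)\le\sum_i\bigl(f(x+g_i)-f(x)\bigr)$; multiplying by $p^l$ and taking infima over the two representations—separated using Proposition~\ref{prop decreasing}—finishes it. The case $m=0$, with $f_x(0)=0$, is immediate since $ng=0$ forces $g$ torsion.

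The one delicate point beyond the torsion bookkeeping is the existence of the infimum in $Y$: the reduction $(\star)$ together with the refinement estimate $p^{l+1}\bigl(f(x+g')-f(x)\bigr)\le p^l\bigl(f(x+g)-f(x)\bigr)$ for $g=pg'$ shows the relevant values form a decreasing family bounded below, and I would invoke the order hypotheses on $Y$ (inductive here, complete later) to extract the infimum. The main obstacle throughout is precisely the torsion: without the coset lemma the passage between a direction and its multiples, which is free in a vector space, is unavailable, and it is this lemma that turns $p$--semidivisibility into a usable substitute for division.
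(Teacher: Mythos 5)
Your proof is correct, and its skeleton is the same as the paper's: the refinement estimate showing that the defining family $n\big(f(x+g)-f(x)\big)$ is decreasing, a uniform lower bound obtained from a representation of $-h$ supplied by $x\in\core(\dom(f))$, the reduction to denominators $p^l$, and the use of Proposition~\ref{prop decreasing} to split infima of sums in the subadditivity step. Two things are genuinely different. First, you prove $f_x(mh)=mf_x(h)$ directly for every $m\in\N$, whereas the paper establishes homogeneity only for $m=p$ and upgrades to full $\N$-sublinearity at the very end via Proposition~\ref{prop p sufficient sublin}; your route is self-contained but is what forces the extra torsion bookkeeping. Second, you introduce a torsion lemma ($kt=0$ implies $f(x+t)=f(x)$) and present torsion as the essential obstacle. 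The lemma is true, and your proof of it is fine modulo the cancellation $ky_1\le ky_2\Rightarrow y_1\le y_2$ in $Y$, which the paper also assumes; but the lemma is not actually needed. In every place you invoke it you can instead apply the convexity inequality with the ambiguously divided element on the \emph{left}-hand side, so that two representatives never have to be compared: from $p^lg=mh$ and $p^lg'=h$ one has $mp^l(x+g')=(mp^l-p^l)x+p^l(x+g)$ directly, whence $mp^l\big(f(x+g')-f(x)\big)\le p^l\big(f(x+g)-f(x)\big)$ with no reference to $g-mg'$; similarly $p^{l+1}(x+g)=(p^{l+1}-2p^l)x+p^l(x+g_1)+p^l(x+g_2)$ disposes of subadditivity. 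This is how the paper silently sidesteps torsion. So your diagnosis of torsion as the genuine obstacle somewhat misplaces the difficulty --- the delicate points are really the order-theoretic ones you mention only in passing (existence of infima of decreasing chains in an inductive order, and $\inf(mC)=m\inf C$, which both you and the paper justify via Proposition~\ref{prop decreasing}) --- but the argument you give is valid as written.
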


\begin{proof}
For arbitrarily large $n,n'\in \N$ with $n<n'$ we can find $g,g'\in X$ such that $ng = n'g' = h$ and $f(x+g)<\infty$, $f(x+g')<\infty$. We have $n'(x+g') = n(x+g)+(n'-n)x$, and so by convexity $n'f(x+g') \le nf(x+g)+(n'-n)f(x)$. Therefore, we have
\begin{align*}
n'(f(x+g')-f(x)) \le n(f(x+g)-f(x)).
\end{align*} 
Also, if $g,g'\in X$ are such that $ng = n'g' = h$, then $(n+n')x = n(x-g)+n'(x+g)$ and so again by convexity, we have 
\begin{align*}
n(f(x)-f(x-g)) \le n'(f(x+g')-f(x)).
\end{align*}
Thus, the sequence $\big\{n\big(f(x+g)-f(x)\big)~\big|~ ng =h,~f(x+g)<\infty\big\}$ is decreasing and bounded from below. Since $\le$ is an inductive order on $Y$, $f_x(h)$ exists and is finite. To show that $f_x(0)\le 0$, note that we can choose $g=0$ in Definition~\ref{def dir der} and obtain $f_x(0)\le 0$. To prove the positive homogeneity of $f_x$, choose, $g,g'\in X$ such that $p^lg=ph$ and $p^{l}g'=h$. Then we have $p^{l+1}(x+g') = p^{l+1}x+ph = p^{l+1}x+p^lg = (p^{l+1}-p^l)x+p^l(x+g)$. Thus, since $f$ is convex, we have
\begin{align*}
p^{l+1}f(x+g') \le (p^{l+1}-p^l)f(x)+p^lf(x+g),
\end{align*}
or in other words,
\begin{align*}
p^{l+1}\big(f(x+g')-f(x)\big) \le p^l\big(f(x+g)-f(x)\big).
\end{align*}
Taking the limit as $l\to \infty$ and using the fact that the sequence in Definition~\ref{def dir der} is decreasing, we get $pf_x(h) \le f_x(ph)$. On the other hand, we have,
\begin{eqnarray*}
pf_x(h) & = & \inf\big\{pn\big(f(x+g)-f(x)~\big|~ng= h\big\}
\\ & \stackrel{(*)}{\ge} &\inf\big\{pn\big(f(x+g)-f(x)~\big|~png=ph\big\}
\\ & \stackrel{(**)}{=} & f_x(ph).
\end{eqnarray*}
In ($*$) we used the fact that if $pg = h $ then $png=ph$ (but we might have a bigger set on which we take the infimum). In ($**$) we used the fact in Definition~\ref{def dir der} the infimum is taken over a decreasing sequence. This shows that $p_x(ph)=pf_x(h)$. Finally, to show subadditivity, note that $p(x+g_1+\dots+g_p) = (x+pg_1)+\dots+(x+pg_p)$, and so by convexity of $f$,
\begin{multline}\label{p conv}
p(f(x+g_1+\dots+g_p)-f(x))
\\ \le \big(f(x+pg_1)-f(x)\big)+\dots+\big(f(x+pg_p)-f(x)\big).
\end{multline}
Multiply~\eqref{p conv} by $n$ and then choose $g_1,\dots,g_p$ such that $ng_1 = h_1,\dots, ng_p=h_p$. This is possible since we may assume without loss of generality that $n=p^l$ for some $l\in \N$, and this is because the sequence $\big\{n\big(f(x+g)-f(x)\big)~\big|~ ng =h,~f(x+g)<\infty\big\}$ is decreasing. We get
\begin{align}\label{p conv with n}
p\big(nf(x+g_1+\dots+g_p)-f(x))\big) \le \sum_{j=1}^pn\big (f(x+g_j)-f(x)\big)
\end{align}
By Definition~\ref{def dir der}, we have
\begin{align}\label{lower bound sum}
p\big(nf(x+g_1+\dots+g_p)-f(x))\big) \ge pf_x(h_1+\dots+h_p).
\end{align}
To evaluate the right side of~\eqref{p conv with n}, note that for each $1 \le j \le p$, the sequence $$\big\{n\big(f(x+g_j)-f(x)\big)~\big|~ ng_j =h_j,~f(x+g_j)<\infty\big\}$$ is decreasing. Thus, using Proposition~\ref{prop decreasing} and taking the infimum over the right side of~\eqref{p conv with n}, we get,
\begin{align}\label{upper bound sum}
\nonumber &\inf\left\{\left.\sum_{j=1}^pn\big (f(x+g_j)-f(x)\big)~\right|~ ng_j=ph_j,~ f(x+g_j)<\infty, ~ 1 \le j \le p\right\} 
\\ \nonumber &=\sum_{j=1}^p\inf\Big\{ n\big (f(x+g_j)-f(x)\big)~\Big|~ ng_j=ph_j,~ f(x+g_j)<\infty, ~ 1 \le j \le p\Big\} 
\\  & = \sum_{j=1}^pf_x(ph_j).
\end{align}
Combining~\eqref{lower bound sum} and~\eqref{upper bound sum}, we get
\begin{align*}
pf_x(h_1+\dots h_p) \le f_x(ph_1)+\dots + f_x(ph_p),
\end{align*}
and so, since $f_x(ph_j)=pf_x(h_j)$, $1 \le j \le p$, we get
\begin{align*}
f_x(h_1+\dots h_p) \le f_x(h_1)+\dots+f_x(h_p).
\end{align*}
Note that here we used the fact that $\le$ is compatible with the group operations on $Y$, and therefore we have $py_1\le py_2 \Longrightarrow y_1 \le y_2$. Next, note that since is assumed to be $p$ prime, $p\ge 2$. Choosing $h_{3} = \dots= h_p=0$, we get
\begin{eqnarray*}
f(h_1+h_2) & \le & f_x(h_1)+f_x(h_2) +f_x(h_3)+\dots +f_x(h_p)
\\ & \stackrel{(*)}{\le} &f_x(h_1)+f_x(h_2) +0
\\ & = & f_x(h_1)+f_x(h_2).
\end{eqnarray*}
where in ($*$) we used the fact that $f_x(0)\le 0$. Altogether we have that $f_x$ is subadditive and $f(px) = pf(x)$. Now apply Proposition~\ref{prop p sufficient sublin} to deduce that $f$ is $\N$-sublinear, and the proof is complete.
\end{proof}

\begin{remark}
The proof of Proposition~\ref{prop derivative} shows that the sequence $n\big(f(x+g)-f(x)\big)$, $ng=h$, $f(x+g)<\infty$, is decreasing. If we assume that we have both $pX=X$ and $qX=X$, then in~\eqref{def dir der}, we can choose $n=p^l$ or $n=q^l$ for every $l\in \N$ and the infimum would be the same in both cases. \qede
\end{remark}

In the case when $f$ is not only convex, but actually $\N$-sublinear, we have the following stronger result.

\begin{prop}[One-sided derivatives, II]\label{prop n sublin}
Assume that $X$ is a group, $(Y,\le)$ is a group with an inductive order, and $f:X\to Y\cup\{\infty\}$ is $\N$-sublinear map, and $x\in \mathrm{core}(\mathrm{dom}(f))$. Then $f_x$ is an everywhere finite $\N$-sublinear map, that satisfies in addition $f_x(0)=0$, $f_x(x) = -f_x(-x)=  f(x)$.
\end{prop}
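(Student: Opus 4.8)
The plan is to exploit the positive homogeneity of $f$ to collapse the difference quotients in Definition~\ref{def dir der}. If $f$ is $\N$-sublinear and $ng=h$, then $nf(x+g)=f(n(x+g))=f(nx+h)$ and $nf(x)=f(nx)$, so that
\[ n\big(f(x+g)-f(x)\big)=f(nx+h)-nf(x), \]
an expression depending only on $n$ and $h$ and not on the chosen root $g$. Hence
\[ f_x(h)=\inf\big\{\, f(nx+h)-nf(x)~\big|~ n\in\N,~ h\in nX,~ f(nx+h)<\infty\,\big\}. \]
This identity is the engine that replaces the semidivisibility hypothesis of Proposition~\ref{prop derivative}: wherever that proof solved $ng=h$ for arbitrarily large $n$, I will instead manipulate $f$ directly through $pf(w)=f(pw)$.

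First I would read off the special values, which are immediate from the displayed formula. For $h=0$ every term equals $f(nx)-nf(x)=0$, so $f_x(0)=0$; for $h=x$ every term equals $f\big((n+1)x\big)-nf(x)=f(x)$, so $f_x(x)=f(x)$; and for $h=-x$ every term equals $f\big((n-1)x\big)-nf(x)=-f(x)$, so $f_x(-x)=-f(x)$, i.e.\ $-f_x(-x)=f(x)$. For finiteness I note that the defining set is non-empty because $x\in\core(\dom f)$, and that the terms are decreasing in $n$ exactly as in Proposition~\ref{prop derivative} (from $n'(x+g')=n(x+g)+(n'-n)x$ and convexity). To bound the family below by a \emph{finite} element I apply the core condition to $-h$, obtaining $m,g'$ with $mg'=-h$ and $f(mx-h)=mf(x+g')<\infty$; subadditivity of $f$ then gives $f(nx+h)+f(mx-h)\ge f\big((n+m)x\big)=(n+m)f(x)$, whence $f(nx+h)-nf(x)\ge mf(x)-f(mx-h)$, a finite bound independent of $n$. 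A decreasing chain bounded below in an inductive order has an infimum, so $f_x(h)$ exists and is finite.

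It remains to show $f_x$ is $\N$-sublinear. For positive homogeneity I would use $pf(w)=f(pw)$ to rewrite $pf_x(h)=\inf_n\phi_{ph}(pn)$, where $\phi_{ph}(k)=f(kx+ph)-kf(x)$; since $h\in nX$ forces $ph\in(pn)X$ we have $p\cdot D(h)\subseteq D(ph)$ with $D(h)=\{n:h\in nX\}$, and for the decreasing family $\phi_{ph}$ this inclusion is cofinal, which matches the two infima and yields $f_x(ph)=pf_x(h)$. For subadditivity I would run the computation of Proposition~\ref{prop derivative}, starting from the group identity $p(x+g_1+\dots+g_p)=(x+pg_1)+\dots+(x+pg_p)$ together with convexity and passing to infima by Proposition~\ref{prop decreasing}; once subadditivity is in hand alongside the homogeneity $f_x(ph)=pf_x(h)$, Proposition~\ref{prop p sufficient sublin} upgrades $f_x$ to an $\N$-sublinear map. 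The hard part, and the precise place where $\N$-sublinearity of $f$ is indispensable, is that without semidivisibility of $X$ the set $D(h)$ is sparse and behaves badly under addition --- $n_1\in D(h_1)$ and $n_2\in D(h_2)$ need \emph{not} give $n_1+n_2\in D(h_1+h_2)$, so one cannot simply combine near-optimal indices (already for $X=\Z^2$ one sees this failure). The argument must therefore show that the monotone family $\phi_h(\cdot)$ attains its infimum along a cofinal set of indices at which the subadditivity inequality closes up, and it is exactly the homogeneity relation $pf(w)=f(pw)$, via $p\cdot D(h)\subseteq D(ph)$, that supplies this cofinality.
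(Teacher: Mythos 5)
Your opening observation --- that for $\N$-sublinear $f$ the quantity $n\big(f(x+g)-f(x)\big)$ collapses to $f(nx+h)-nf(x)$, which depends only on $n$ and $h$ --- is exactly the engine of the paper's proof. But the paper then takes the infimum over \emph{all} $n\in\N$ with $f(nx+h)<\infty$ (an upward-closed set of indices, since $x\in\dom(f)$), whereas you retain the constraint $h\in nX$. That single divergence is fatal to your subadditivity step, and no cofinality argument can repair it, because with the restricted index set the conclusion is simply false. Take $X=\Z^2$, $Y=\R$, $f(u,v)=\max(u,v)$ (which is $\N$-sublinear), and $x=(1,-1)$, so $f(x)=1$. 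For $h_1=(-1,1)$, $h_2=(-1,2)$ and $h_1+h_2=(-2,3)$ the set $D(h)=\{n\,:\,h\in nX\}$ equals $\{1\}$ in all three cases (the coordinates are coprime), so your $f_x$ reduces to $f(x+h)-f(x)$ and gives $f_x(h_1)=-1$, $f_x(h_2)=0$, $f_x(h_1+h_2)=1>-1+0$. By contrast, the unrestricted infimum gives $f_x(h_2)=-1$ and $f_x(h_1+h_2)=-2$, and subadditivity holds. This also shows the two infima genuinely differ, so the enlargement of the index set is a substantive step (licensed precisely by positive homogeneity of $f$), not a cosmetic rewriting; it is what lets the paper prove subadditivity by the one-line estimate $f((n_1+n_2)x+h_1+h_2)-(n_1+n_2)f(x)\le\big(f(n_1x+h_1)-n_1f(x)\big)+\big(f(n_2x+h_2)-n_2f(x)\big)$ and what makes the semidivisibility hypothesis unnecessary.

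Two smaller points. First, your homogeneity argument rests on the unproved assertion that $p\cdot D(h)$ is cofinal in $D(ph)$; this is plausible for torsion-free $X$ (split $k\in D(ph)$ according to whether $p\mid k$) but you give no argument, and in any case it becomes moot once the index set is all of $\N$, where the paper's ``restrict to multiples of $m$ along a decreasing sequence'' argument is immediate. Second, your finiteness bound via the core condition applied to $-h$, and your computation of the special values $f_x(0)=0$, $f_x(\pm x)=\pm f(x)$, are fine and match the paper. The fix is short: adopt the unrestricted formula $f_x(h)=\inf\{f(nx+h)-nf(x)\mid f(nx+h)<\infty\}$ as the paper does, after which your own identities carry the whole proof.
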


\begin{proof}
When $f$ is $\N$-sublinear,~\eqref{def dir der} becomes
\begin{align*}
f_x(h) = \inf\big\{f(nx+h)-nf(x)~\big|~f(nx+h)<\infty\big\}.
\end{align*}
Since $f$ is positively homogeneous, it is easy to see that $f_x(x) = -f_x(-x) = f(x)$ and $f_x(0)=0$. To show the positive homogeneity of $f_x$, use the fact that, as in the proof of Proposition~\ref{prop derivative}, the sequence $\big\{f(nx+h)-nf(x)\big\}$ is decreasing, and so we have for all $m\in \N$,
\begin{align*}
f_x(mh) & = \inf\big\{f(nx+mh)-nf(x)~\big|~f(nx+mh)<\infty\big\}
\\ & = \inf\big\{f(mkx+mh)-mkf(x)~\big|~f(mkx+mh)<\infty\big\}
\\ & = m\inf\big\{f(kx+h)-kf(x)~\big|~f(kx+h)<\infty\big\}
\\ & =mf_x(h).
\end{align*}
To show the subadditivity, take $n_1,n_2\in \N$. Since $f$ is subadditive, we have,
\begin{align*}
f_x(h_1+h_2) & \le f((n_1+n_2)x+h_1+h_2)-(n_1+n_2)f(x) 
\\ & \le \big(f(n_1x+h_1)-n_1f(x)\big)+\big(f(n_2x+h_2)-n_2f(x)\big).
\end{align*}
Taking the infimum over all $n_1,n_2\in \N$ such that $f(n_1x+h_1)<\infty$, $f(n_2x+h_2)<\infty$, the subadditivity follows.
\end{proof}

Given two monoids $X$ and $Y$, let $\mathcal L(X,Y)$ be the collection of all additive maps between $X$ and $Y$. As in the vector space setting, define the following:
\begin{align*}
\partial f(x_0) = \Big\{a\in \mathcal L(X,Y) ~\Big|~ f(x_0) + a(h) \le f(x_0+h)\Big\}.
\end{align*}
In the vector space setting it is usually required that $a(x-x_0) \le f(x)-f(x_0)$. However, in order to avoid taking differences, we use the above definition. Let $\mathcal L(X,Y)$ be the space of all additive maps between $X$ and $Y$. Then it follows that $\partial f(x_0) \subseteq \mathcal L(X,Y)$.

\begin{prop}\label{prop derivative sublin}
Assume that $X$ is a $p$-semidivisible group, $(Y,\le)$ is a group with an inductive order, and $f:X\to Y\cup\{\infty\}$ is subadditive and satisfies $f(px)=pf(x)$ for all $x\in X$. If $x\in \mathrm{core}(\mathrm{dom}(f))$, then $f_x\le f$ and
\begin{align*}
f_x(x)+f_x(-x) \le 0.
\end{align*}
\end{prop}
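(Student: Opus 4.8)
The plan is to first strengthen the hypotheses to full $\N$-sublinearity and then obtain both conclusions directly from Definition~\ref{def dir der} by evaluating the defining infimum at convenient pairs $(n,g)$. Since $f$ is subadditive, satisfies $f(px)=pf(x)$, and takes values in the group $Y$, Proposition~\ref{prop p sufficient sublin} shows that $f$ is $\N$-sublinear; in particular $f(0)=0$ and $f(mx)=mf(x)$ for every $m\in\N\cup\{0\}$. Being $\N$-sublinear, $f$ is convex, so the hypotheses that $X$ is $p$-semidivisible and that $\le$ is inductive allow me to invoke Proposition~\ref{prop derivative}: the defining infimum of $f_x$ is taken over a nonempty set (by the core condition on $x$) and over a sequence that is decreasing and bounded below, so $f_x(h)$ exists and is finite for every $h$.

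For $f_x\le f$, I would fix $h\in X$ and bound every admissible term. If $ng=h$ and $f(x+g)<\infty$, then subadditivity gives $f(x+g)-f(x)\le f(g)$, while positive homogeneity gives $nf(g)=f(ng)=f(h)$, so that
\[
n\big(f(x+g)-f(x)\big)\le nf(g)=f(h).
\]
Since $x\in\dom(f)$ makes $f(x)$ finite and $\le$ is compatible with addition on $Y$, these manipulations are legitimate; taking the infimum over all admissible pairs yields $f_x(h)\le f(h)$.

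For the remaining inequality I would test the infimum at two explicit points. Taking $n=1$ and $g=x$ produces the admissible term $f(2x)-f(x)$, which is at most $f(x)$ by subadditivity, so $f_x(x)\le f(x)$ (this also follows from the previous step). Taking $n=1$ and $g=-x$ is admissible because $f(x+(-x))=f(0)=0<\infty$, and produces the term $f(0)-f(x)=-f(x)$, so $f_x(-x)\le -f(x)$. Adding these and again using compatibility of $\le$ with the group operation gives
\[
f_x(x)+f_x(-x)\le f(x)+\big(-f(x)\big)=0.
\]
The two term estimates are routine; the only step that genuinely requires care is the well-definedness of $f_x$ as an element of $Y$, and this is precisely where membership of $x$ in $\core(\dom(f))$, the $p$-semidivisibility of $X$, and the inductive order on $Y$ all enter, exactly as in Proposition~\ref{prop derivative}.
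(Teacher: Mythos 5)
Your overall strategy---bound the defining infimum of $f_x$ by evaluating it at well-chosen admissible pairs, using subadditivity and homogeneity---is the same as the paper's, and your estimates $f_x(x)\le f(x)$ and $f_x(h)\le f(h)$ are fine wherever $f$ is finite. The gap is in how you handle the value $\infty$. Proposition~\ref{prop p sufficient sublin} is stated for $f:X\to Y$ with $Y$ a group, and its proof of full positive homogeneity cancels the term $m'f(x)$, which requires finiteness; for $f:X\to Y\cup\{\infty\}$ the conclusion is not available. This matters twice. First, your identity $nf(g)=f(ng)$ is unjustified when $f(g)=\infty$, so the term-by-term bound for $f_x\le f$ is not established for every admissible pair. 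The paper avoids this by restricting to $n=p^l$, where $f(p^lg)=p^lf(g)$ follows by iterating the hypothesis $f(py)=pf(y)$ (which is assumed for \emph{all} $y$, infinite values included); the restriction costs nothing because, as shown in the proof of Proposition~\ref{prop derivative}, the quantities $n\big(f(x+g)-f(x)\big)$ decrease as $n$ increases, so the infimum over powers of $p$ is the same.

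Second, and more seriously, your bound $f_x(-x)\le f(0)-f(x)=-f(x)$ tests the infimum at the pair $(1,-x)$, which is admissible only if $f(0)<\infty$, and the hypotheses do not force $f(0)=0$. For instance, on the $p$-semidivisible group of rationals of the form $m/p^n$ (cf.\ the dyadic rationals example), let $f(y)=0$ for $y>0$ and $f(y)=\infty$ for $y\le 0$: this $f$ is subadditive, satisfies $f(py)=pf(y)$, and every $x>0$ lies in $\core(\dom(f))$, yet $f(0)=\infty$, so the pair $(1,-x)$ is excluded from the infimum and your estimate for $f_x(-x)$ collapses (the conclusion still holds there, but via other pairs). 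The paper instead chooses $g$ with $pg=x$ and uses the pairs $(p,g)$ and $(p,-g)$; these are automatically admissible since $pf(x\pm g)=f\big((p\pm 1)x\big)\le (p\pm 1)f(x)<\infty$, and then $p\big(f(x+g)-f(x)\big)+p\big(f(x-g)-f(x)\big)=f\big((p+1)x\big)+f\big((p-1)x\big)-2pf(x)\le 0$ by subadditivity. Your argument becomes correct if you either restrict throughout to $n=p^l$ and test the second inequality at $(p,\pm g)$ with $pg=x$, or add the assumption that $f$ is everywhere finite (which is how the proposition is in fact applied in the proof of Theorem~\ref{thm max}).
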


\begin{proof}
To prove the first assertion, note that
\begin{eqnarray*}
f_x(h) & \stackrel{(*)}{=} &\inf\big\{n\big(f(x+g)-f(x)\big)~\big|~ng =h, ~f(x+g)<\infty\big\}
\\ & = & \inf\big\{p^l\big(f(x+g)-f(x)\big)~\big|~p^lg =h, ~f(x+g)<\infty\big\}
\\ & \le & \inf\big\{p^lf(g)~\big|~p^lg =h, ~f(x+g)<\infty\big\}
\\ & \stackrel{(**)}{=} & f(h),
\end{eqnarray*}
where in ($*$) we used the fact that $\big\{n\big(f(x+g)-f(x)\big)~\big|~ ng =h,~f(x+g)<\infty\big\}$ is a decreasing sequence and in ($**$) we used the fact that $f(px) = pf(x)$. To prove the second assertion, choose $g$ such that $pg=x$ and note that 
\begin{align*}
f_x(x)+f_x(-x) & \le p\big(f(x+g)-f(x)\big)+m\big(f(x-g)-f(x)\big)
\\& = f((p+1)x)+f((p-1)x)-2pf(x)
\\& \le 0,
\end{align*}
where in the last inequality we used the subadditivity of $f$.
\end{proof}

\begin{prop}\label{prop subdif conv}
If $(Y,\le)$ satisfies that for every $m\in \N$ $my_1\le my_2 \Longrightarrow y_1 \le y_2$ then $\partial p(x_0)$ is convex in $\mathcal L(X,Y)$.
\end{prop}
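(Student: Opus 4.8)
The plan is to unwind what \emph{convex} means for the subset $\partial f(x_0)$ of the group $\mathcal{L}(X,Y)$ and then verify the defining inclusion of Definition~\ref{def convex set} directly. Recall that $\mathcal{L}(X,Y)$ is itself a commutative group under pointwise addition, with $(ma)(h)=m\,a(h)$ for $m\in\N$ and $h\in X$. Applying Definition~\ref{def convex set} to this group, convexity of $\partial f(x_0)$ amounts to the following: whenever $a_1,\dots,a_n\in\partial f(x_0)$ and $m_1,\dots,m_n,m\in\N$ satisfy $m=\sum_{i=1}^n m_i$ and there exists $a\in\mathcal{L}(X,Y)$ with $ma=\sum_{i=1}^n m_i a_i$, then $a\in\partial f(x_0)$.

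First I would write down the subgradient inequalities. By definition each $a_i$ satisfies $f(x_0)+a_i(h)\le f(x_0+h)$ for all $h\in X$. Using that $\le$ is compatible with the group operation on $Y$, I multiply the $i$-th inequality by $m_i$ and sum over $i$, obtaining for each fixed $h$
\[
\Big(\sum_{i=1}^n m_i\Big) f(x_0)+\sum_{i=1}^n m_i a_i(h)\le\Big(\sum_{i=1}^n m_i\Big) f(x_0+h).
\]
Since $\sum_i m_i=m$ and, as elements of $\mathcal{L}(X,Y)$, $\sum_i m_i a_i=ma$ so that $\sum_i m_i a_i(h)=(ma)(h)=m\,a(h)$, this reads $m\big(f(x_0)+a(h)\big)\le m\,f(x_0+h)$.

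The final step is to cancel the factor $m$, and this is exactly where the hypothesis enters. If $f(x_0+h)$ is finite, then all quantities lie in $Y$, and with $y_1=f(x_0)+a(h)$, $y_2=f(x_0+h)$ the assumption $my_1\le my_2\Rightarrow y_1\le y_2$ yields $f(x_0)+a(h)\le f(x_0+h)$. If instead $f(x_0+h)=\infty$, the desired inequality holds trivially because $\infty$ is maximal. Since $h$ was arbitrary, $a\in\partial f(x_0)$, which proves convexity.

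The only point demanding care is the value $\infty$: the cancellation hypothesis is stated for elements of $Y$, so I must separate the case $f(x_0+h)=\infty$ (handled trivially) from the finite case before invoking it, and I should note that $f(x_0)$ may be taken finite, since otherwise $\partial f(x_0)$ is either empty or all of $\mathcal{L}(X,Y)$ and is convex for trivial reasons. Beyond this, the argument is a routine scale-and-sum computation; the only structural facts genuinely used are compatibility of the order with addition (to scale and add the subgradient inequalities) and the stated cancellation property (to divide by $m$), so I expect no substantive obstacle.
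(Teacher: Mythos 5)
Your proof is correct and follows essentially the same route as the paper's: multiply each subgradient inequality $f(x_0)+a_i(h)\le f(x_0+h)$ by $m_i$, sum, identify $\sum_{i=1}^n m_i a_i(h)$ with $m\,a(h)$ via $ma=\sum_i m_i a_i$, and cancel the factor $m$ using the order hypothesis on $Y$. Your explicit separation of the case $f(x_0+h)=\infty$ (and the remark about $f(x_0)$ being finite) is a small extra care that the paper leaves implicit, but it does not alter the argument.
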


\begin{proof}
For $a_1,\dots,a_n,a\in \mathcal L(X,Y)$, assume that $ma = \sum_{i=1}^nm_ia_i$, $m = \sum_{i=1}^nm_i$. Then we have
\begin{align*}
m\big(f(x_0)+a(x)\big) = \sum_{i=1}^n(f(x_0)+a_i(x)) \le \sum_{i=1}^nm_if(x)  = mf(x).
\end{align*}
By the assumption on $Y$, it follows that $f(x_0)+a(x) \le f(x)$.
\end{proof}

\subsection{The maximum or max formula}
We show that the well known max formula \cite{BV10,BL06} holds in this generality.

\begin{thm}[Max formula]\label{thm max}
Assume that $X$ is a $p$-semidivisible group, that $(Y,\le)$ is an additive group with an inductive order, and $f:X\to Y\cup\{\infty\}$ is convex. Assume also that for some $x_0\in\mathrm{core}(\mathrm{dom}(f))$, we have
\begin{align}\label{der anti symm}
f_{x_0}(x_0)+f_{x_0}(-x_0)\le 0.
\end{align}
Then we have 
\begin{align}\label{max for}
f_{x_0}(h) = \max\big\{a(h) ~ \big| ~ a \in \partial f(x_0)\big\}.
\end{align}
In particular, $f$ admits additive minorants, and $\partial f(x_0) \neq \emptyset$. The maximal element in~\eqref{max for} is bounded.
\end{thm}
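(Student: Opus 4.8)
The plan is to reduce everything to a statement about the directional derivative $p := f_{x_0}$ and then to run a Hahn--Banach style dominated extension. First I would record the convenient reformulation of the subdifferential: for an additive $a$ one has $a\in\partial f(x_0)$ \emph{if and only if} $a\le p$ pointwise. The forward direction follows because, taking $n=1$ and $g=h$ in Definition~\ref{def dir der}, we get $p(h)\le f(x_0+h)-f(x_0)$, so $a\le p$ forces $f(x_0)+a(h)\le f(x_0+h)$. Conversely, if $a\in\partial f(x_0)$ then for any $g$ with $ng=h$ additivity gives $a(h)=n\,a(g)\le n\big(f(x_0+g)-f(x_0)\big)$, and taking the infimum over such $g$ yields $a(h)\le p(h)$. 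This last computation simultaneously establishes the \textbf{easy inequality} $\sup\{a(h)\mid a\in\partial f(x_0)\}\le p(h)$ in~\eqref{max for}, so the whole theorem comes down to attainment.

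By Proposition~\ref{prop derivative}, $p=f_{x_0}$ is an everywhere finite, $\N$-sublinear map; in particular $p(0)=0$ and, by subadditivity, $p(x_0)+p(-x_0)\ge 0$, so the hypothesis~\eqref{der anti symm} upgrades to the anti-symmetry $p(x_0)=-p(-x_0)$. Fix a direction $h$. I would construct an additive $a$ with $a\le p$ and $a(h)=p(h)$ by a Zorn's-lemma dominated extension, seeded on the subgroup generated by $x_0$ and $h$: set $a(h)=p(h)$ and $a(x_0)=p(x_0)$, extend additively, and check $a\le p$ there using positive homogeneity of $p$ together with the anti-symmetry at $x_0$ (this is exactly where~\eqref{der anti symm} is used to guarantee a consistent starting functional). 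One then orders the dominated additive partial extensions by extension; chains have upper bounds by taking unions, and the inductive order on $Y$ guarantees the one point where a limit value is needed.

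The \textbf{main obstacle} is the one-step extension: given a maximal dominated additive $a^*$ on a subgroup $H^*\subsetneq X$ and $x_1\notin H^*$, one must produce a value $c=a^*(x_1)\in Y$ with $a^*(h')+kc\le p(h'+kx_1)$ for all $h'\in H^*$, $k\in\Z$. The necessary compatibility, namely
\[
a^*(m h_2 + n h_1)\le m\,p(h_2 - n x_1) + n\,p(h_1 + m x_1)\qquad(m,n\in\N,\ h_1,h_2\in H^*),
\]
holds because $a^*\le p$ on $H^*$ and the right side dominates $p(m h_2 + n h_1)$ by sublinearity. The genuine difficulty is that $Y$ carries only an \emph{inductive} order, so one cannot simply divide and take an infimum/supremum of an arbitrary family. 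Here I would exploit the $p$-semidivisibility of $X$, as in Proposition~\ref{prop derivative}: since $X=p^lX$ it suffices to test the constraints along indices $k=\pm p^l$, which (by the decreasing-sequence phenomenon established there, together with Proposition~\ref{prop decreasing}) reduces the two competing families to a \emph{countable monotone} chain whose separating value $c$ is furnished by inductivity of the order. This is the crux; the remainder of the extension is then routine.

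Finally, the maximal extension $a$ has domain all of $X$ (else maximality is contradicted), is additive, satisfies $a\le p$ hence $a\in\partial f(x_0)$ by the reformulation above, and attains $a(h)=p(h)$; combined with the easy inequality this proves~\eqref{max for}, and in particular $\partial f(x_0)\neq\emptyset$ and $f$ has additive minorants. Boundedness of the maximising $a$ follows from the squeeze $-p(-h)\le a(-h)^{-}\!=a(h)\le p(h)$ using additivity of $a$ and finiteness of $p$, so $a$ is trapped between the everywhere-finite maps $-p(-\cdot)$ and $p(\cdot)$.
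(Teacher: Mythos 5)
Your reduction of the theorem to ``find an additive $a\le p:=f_{x_0}$ with $a(h)=p(h)$'', and the equivalence $a\in\partial f(x_0)\iff a\le p$, are correct and consistent with how the paper uses Proposition~\ref{prop derivative}. The proof breaks down, however, at exactly the step you flag as the crux: the one-step dominated extension. To extend $a^*$ from $H^*$ to $H^*+\Z x_1$ you must produce a \emph{single element} $c\in Y$ satisfying the two-sided system
\begin{align*}
m\,c \;\ge\; a^*(h_1)-p(h_1-mx_1)
\qquad\text{and}\qquad
k\,c \;\le\; p(h_2+kx_1)-a^*(h_2)
\end{align*}
for all $m,k\in\N$ and all $h_1,h_2\in H^*$. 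Two independent obstructions make this impossible with the stated hypotheses. First, $Y$ is not assumed divisible (the semidivisibility hypothesis is on $X$, not $Y$), so the coefficient of $c$ cannot be normalised away: the ``quotients'' whose monotonicity in $l$ you invoke simply do not exist as elements of $Y$, and the decreasing-sequence phenomenon of Proposition~\ref{prop derivative} is a statement about the products $n\big(f(x+g)-f(x)\big)$, not about objects divided by $n$. Second, even if one could reduce both families to countable monotone chains, an \emph{inductive} order only guarantees an upper bound for a countable chain; it does not guarantee an element lying above the lower family \emph{and} below the upper family. That separation property is a form of order completeness, which the paper pointedly reserves for the Fenchel section and does not assume here. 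A smaller but genuine error is the seeding: prescribing both $a(x_0)=p(x_0)$ and $a(h)=p(h)$ is in general inconsistent with $a\le p$, since sublinearity gives $p(x_0+h)\le p(x_0)+p(h)$, possibly strictly, whence $a(x_0+h)=p(x_0)+p(h)>p(x_0+h)$. Only the single direction $h$ may be prescribed; attainment at $x_0$ must instead be deduced afterwards from \eqref{der anti symm} and additivity, as the paper does.

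The paper's proof avoids choosing any value in $Y$ at all, and that is the idea your proposal is missing. Rather than extending partial additive maps upward from a subgroup, it shrinks total $\N$-sublinear maps downward: Zorn's lemma is applied to pairs $(\varphi,S)$ where $\varphi$ is an everywhere-defined $\N$-sublinear minorant of $f_{x_0}$ and $S$ records the directions $s$ with $\varphi(s)+\varphi(-s)\le 0$ (i.e.\ where $\varphi$ is already odd). The successor step replaces $\varphi$ by its own directional derivative $\varphi_y$, which by Propositions~\ref{prop n sublin} and~\ref{prop derivative sublin} is again a globally defined $\N$-sublinear minorant with $\varphi_y(y)+\varphi_y(-y)\le 0$; its construction (Definition~\ref{def dir der}) requires only the infimum of a countable \emph{decreasing} chain in $Y$, which is precisely what inductivity supplies. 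Maximality then forces $S=X$ and forces the maximal $\bar\varphi$ to be additive, and \eqref{der anti symm} yields $\bar\varphi(x_0)=f_{x_0}(x_0)$ and boundedness. If you want to rescue your argument you should replace ``extend a partial functional one generator at a time'' by ``replace a total sublinear minorant by its directional derivative one direction at a time''; as written, the one-step extension cannot be carried out in an inductively ordered, non-divisible $Y$.
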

 
\begin{proof}
Define $\mathcal C$ to be the set of all pairs $(\varphi,S)$, where $S\subseteq X$, and $\varphi :X\to Y\cup\{\infty\}$ is $\N$-sublinear and satisfies $\varphi\le f_{x_0}$, and $\sup_{s\in S}\big(\varphi(s)+\varphi(-s)\big)\le 0$. Define a partial order on $\mathcal C$ by 
\begin{align*}
(\varphi_1,S_1) \le (\varphi_2,S_2) \iff \varphi_1\ge \varphi_2, ~S_1 \subseteq S_2.
\end{align*}
$(\mathcal C, \le)$ is inductive, as both $\le$ and $\subseteq$ are inductive orders. By Proposition~\ref{prop derivative}, we have $f_{x_0}(0) = 0$, implying that $(f_{x_0},\{0\}) \in \mathcal C$ and so $\mathcal C\neq \emptyset$. Therefore, $\mathcal C$ has a maximal element $(\bar \varphi, \bar S)$. We claim that we must have $\bar S=X$. Otherwise, choose $y \in X\setminus \bar S$. Since $(\bar\varphi, \bar S)\in \mathcal C$, in particular it follows that the function $\bar \varphi$ satisfies the hypotheses of Proposition~\ref{prop derivative sublin}. Also, $y\in \mathrm{core}(\mathrm{dom}(f))$ since $\bar \varphi \le f_{x_0}$ and $f_{x_0}$ is everywhere finite (by Proposition~\ref{prop derivative}). Therefore, Proposition~\ref{prop derivative sublin} implies that $\bar\varphi_y \le \bar \varphi$ and $\bar \varphi_{y}(y)+\bar \varphi_{y}(-y)\le 0$. This means that $(\bar \varphi_{y},\bar S\cup \{y\})\in \mathcal C$, which is a contradiction to the maximality of $(\bar \varphi, \bar S)$. Thus, we have $\bar S=X$. Next, we claim that $\bar \varphi$ is additive on $X$. If not, then since $\bar \varphi$ is subadditive, there must exist $x,h\in X$ such that $\bar \varphi(x+h)-\bar \varphi(h)<\bar \varphi(x)$. But then $\bar \varphi_{x} \le \bar \varphi$ which is again a contradiction to the maximality of $(\bar \varphi, \bar S)$. Since $\bar \varphi\le f_{x_0}$ and by~\eqref{der anti symm} we have $\bar \varphi(-x_0)\le f_{x_0}(-x_0) \le-f_{x_0}(x_0)$, it follows that $\bar \varphi(x_0)=f_{x_0}(x_0)$ and $\bar\varphi$ is bounded. Choosing $a=\bar \varphi$ proves~\eqref{max for}. Since $x\in \mathrm{core}(\mathrm{dom}(f))$, Definition~\ref{def dir der} implies that the maximal element in~\eqref{max for} is indeed bounded. This completes the proof.
\end{proof}

An instructive setting is when $Y$ is the symmetric matrices endowed with the (non-lattical)  semidefinite order.

\begin{remark}[Well posedness]
If $f$ is $\N$-sublinear and $ng = x$, then by positive homogeneity, we have $n\big(f(x-g)-f(x)\big) = f((n-1)x)-f(nx) = -f(x)$ and $n\big(f(x+g)-f(x)\big) = f((n+1)x)-f(nx) = f(x)$. In particular, $f_x(x)+f_x(-x) \le 0$ for every $x\in \mathrm{core}(\dom(f))$. Thus, every $\N$-sublinear function satisfies the assumptions of Theorem~\ref{thm max}. \qede
\end{remark}

\begin{remark}
Using Proposition~\ref{prop n sublin}, we have that Theorem~\ref{thm max} holds if $f$ is $\N$-sublinear, even if we omit the subdivisibility assumption. \qede
\end{remark}

\subsection{Fenchel-Rockafellar duality}\label{subsec fenchel}
As in vector spaces, define the \emph{additive dual group} of a group $X$ to be 
\begin{align*}
X^* = \big\{\varphi:X\to \R ~\big|~ \varphi \text{ is additive}\big\}.
\end{align*}
Then 
$X^*$ is an additive group with the addition being  point-wise addition. We emphasise that $X^*$ is \emph{not} the group of homomorphisms of $X$. How rich a notion this is depends on the given group.

Consider now $(Z, \le)$ which is order complete. We still require that $\le$ is compatible with the group operation. Define the \emph{conjugate function} $f^{\star}:X^*\to Z\cup\{\infty\}$ to be
\begin{align}\label{def:fenchel}
f^{\star}(\varphi) = \sup_{x\in X}\big\{\varphi(x)-f(x)\big\}.
\end{align}
The conjugate function has been studied extensively in the vector space setting. See for example~\cite{BL06, BV10, Rock97}. Note that $f^*(\varphi)=+\infty$ will happen if \eqref{def:fenchel} has no upper bound. Before proving the \emph{Fenchel duality theorem} for groups, we need the following proposition.

\begin{prop}\label{prop inf conv}
Assume that $X_1,X_2,Z$ are groups, where $X_1$ is semidivisible and $(Z,\le)$ is an order complete group. Let $T:X_1\to X_2$ be additive, and assume that $f:X_1\to Z\cup\{\infty\}$ and $g:X_2\to Z\cup\{\infty\}$ are convex. If we define $h:X_2\to Z\cup\{\infty\}$ by
\begin{align*}
h(u) = \inf_{x\in X_1}\big[f(x)+g(Tx+u)\big],
\end{align*}
then $h$ is convex, and it domain is given by
\begin{align}\label{dom h}
\dom(h) = \dom(g)-T\dom(g).
\end{align}
\end{prop}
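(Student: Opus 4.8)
The plan is to treat the two assertions separately, beginning with the domain identity, which is purely set-theoretic, and then proving convexity of $h$ by reducing to the prime-power case and exploiting the convexity of $f$ and $g$ along a single auxiliary point supplied by semidivisibility. For the domain I would argue directly from the definition: since $\infty$ is an absorbing maximal element, a sum $f(x)+g(Tx+u)$ is finite exactly when both summands are, that is, when $x\in\dom(f)$ and $Tx+u\in\dom(g)$. Hence $h(u)<\infty$ iff there exists $x\in\dom(f)$ with $Tx+u\in\dom(g)$; writing $w=Tx+u\in\dom(g)$ gives $u=w-Tx\in\dom(g)-T\dom(f)$, and conversely any $u=w-Tx$ with $w\in\dom(g)$ and $x\in\dom(f)$ satisfies $h(u)\le f(x)+g(w)<\infty$. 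This establishes $\dom(h)=\dom(g)-T\dom(f)$.

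For convexity I would work with the prime $p$ for which $pX_1=X_1$, so that $p^lX_1=X_1$ for every $l$ by the earlier remark. By Proposition~\ref{prop p sufficient} it suffices to verify the convexity inequality only for $r=p^l$: given $u,u_1,\dots,u_n\in X_2$ and $m_1,\dots,m_n\in\N$ with $p^lu=\sum_{i=1}^n m_iu_i$ and $p^l=\sum_{i=1}^n m_i$, I must show $p^lh(u)\le\sum_{i=1}^n m_ih(u_i)$. The key device is to split the $p^l$ summands into blocks of sizes $m_1,\dots,m_n$: introduce independent variables $\xi_1,\dots,\xi_{p^l}\in X_1$ and let $i(k)$ denote the index of the block containing slot $k$. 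For any such tuple, semidivisibility provides $x\in X_1$ with $p^lx=\sum_{k=1}^{p^l}\xi_k$. Convexity of $f$ then gives $p^lf(x)\le\sum_k f(\xi_k)$, and since $p^l(Tx+u)=\sum_k\bigl(T\xi_k+u_{i(k)}\bigr)$ with $p^l$ unit coefficients summing to $p^l$, convexity of $g$ gives $p^lg(Tx+u)\le\sum_k g\bigl(T\xi_k+u_{i(k)}\bigr)$. Combining these with $h(u)\le f(x)+g(Tx+u)$ and multiplying by $p^l$ (the order on $Z$ being compatible) yields
\[
p^lh(u)\le\sum_{k=1}^{p^l}\Bigl[f(\xi_k)+g\bigl(T\xi_k+u_{i(k)}\bigr)\Bigr],
\]
valid for every choice of $(\xi_1,\dots,\xi_{p^l})$.

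It then remains to take the infimum over the independent variables on the right, which leaves the left side untouched. Because these variables are separate and the order on $Z$ is compatible with addition and order complete, the infimum of a finite sum of functions of distinct variables equals the sum of the infima, by translation-invariance of the infimum; this is a direct generalisation of Proposition~\ref{prop decreasing}. Each slot $k$ in block $i$ contributes $\inf_{\xi}\bigl[f(\xi)+g(T\xi+u_i)\bigr]=h(u_i)$, so the total is $\sum_{i=1}^n m_ih(u_i)$, giving $p^lh(u)\le\sum_i m_ih(u_i)$ as required. The main obstacle is precisely this order-theoretic bookkeeping: one must ensure the infimum commutes with finite sums in $(Z,\le)$ and that the relevant infima exist in $Z\cup\{\infty\}$. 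Organising the $p^l=\sum_i m_i$ terms as distinct unit-coefficient slots, rather than as $m_i$-fold multiples of shared points, is exactly what avoids having to commute the infimum with multiplication by a positive integer, reducing the whole argument to the additive independence lemma.
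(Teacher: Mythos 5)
Your proof is correct and follows the same overall strategy as the paper's: reduce to $r=p^l$ via Proposition~\ref{prop p sufficient}, use semidivisibility to produce the auxiliary point $x$ with $p^lx$ equal to the relevant combination, apply convexity of $f$ and of $g$ (via additivity of $T$), and pass to the infimum. The one genuine difference is in how the infimum step is organised. The paper picks $n$ points $x_1,\dots,x_n$ with $mx=\sum m_ix_i$, obtains $mh(u)\le\sum_i m_i\bigl(f(x_i)+g(Tx_i+u_i)\bigr)$, and then simply "takes the infimum over $x_1,\dots,x_n$"; this implicitly requires commuting the infimum both with a finite sum over independent variables and with multiplication by the positive integers $m_i$ in the ordered group $Z$. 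Your unit-coefficient slot decomposition $\xi_1,\dots,\xi_{p^l}$ eliminates the second of these issues entirely, leaving only the additive independence fact (infimum of a sum of functions of distinct variables equals the sum of the infima), which you correctly justify by the translation argument underlying Proposition~\ref{prop decreasing}. This is a more careful execution of the same idea rather than a new method, and it also buys you a cleaner treatment of the order-theoretic bookkeeping that the paper leaves tacit. You also supply the domain argument in full (the paper calls it immediate) and, correctly, arrive at $\dom(h)=\dom(g)-T\dom(f)$, which fixes an evident typo in the statement as printed and matches the form used later in Theorem~\ref{thm fenchel}.
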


\begin{proof}
First, note that since $g$ is convex and $T$ is additive, it follows that $g\circ T:X_1\to Z\cup\{\infty\}$ is convex. Next, to show the convexity of $h$, let $m_1,\dots,m_n\in \N$, $u_1,\dots,u_n,u\in X_2$ such that $mu = \sum_{i=1}^nm_iu_i$, $m=\sum_{i=1}^nm_i$. Let $x_1,\dots,x_n\in X_1$. By Proposition~\ref{prop p sufficient}, we may assume that $m = p^l$, where $p$ is a prime satisfying $pX=X$. Hence, there exists $x\in X_1$ such that $mx = \sum_{i=1}^nm_ix_i$. We have
\begin{align*}
mh(u) & \le m\big(f(x)+g(Tx+u)\big) 
\\ & \le \sum_{i=1}^nm_i\big(f(x_i)+g(Tx_i+u_i)\big).
\end{align*}
Taking the infimum over $x_1,\dots,x_n\in X$, we get 
\[mh(u) \le \sum_{i=1}^nm_ih(u_i).\]
The proof of~\eqref{dom h} is immediate. This completes the proof. 
\end{proof}

\begin{thm}[Fenchel-Young inequality for groups]\label{fenchel-young}
Suppose that $X$, $Z$, are groups, $Z$ is order complete, and $f:X\to Z\cup\{\infty\}$. Then for every $x\in X$ and every $\varphi \in X^*$,
\begin{align*}
f(x)+f^{\star}(\varphi) \ge \varphi(x).
\end{align*}
Equality holds if and only if $\varphi\in \partial f(x)$.
\end{thm}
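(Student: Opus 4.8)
The plan is to unwind the definition of the conjugate directly; no deep machinery is needed, since the Fenchel--Young inequality is essentially a tautology once the supremum in \eqref{def:fenchel} is read correctly. For the inequality itself I would first dispose of the case $f(x)=\infty$, where $f(x)+f^{\star}(\varphi)=\infty\ge\varphi(x)$ holds trivially (recall $\varphi(x)$ is finite). When $x\in\dom(f)$, I simply specialise $f^{\star}(\varphi)=\sup_{y\in X}\{\varphi(y)-f(y)\}$ to the single point $y=x$, which yields $f^{\star}(\varphi)\ge\varphi(x)-f(x)$. Adding $f(x)$ to both sides — here using that $Z$ is a group with a compatible order, so $f(x)$ may be transposed across the inequality without disturbing it — gives $f(x)+f^{\star}(\varphi)\ge\varphi(x)$, as desired.

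For the equality characterisation I would argue both directions by translating ``the supremum is attained at $x$'' into the subgradient inequality. Assuming $x\in\dom(f)$, equality $f(x)+f^{\star}(\varphi)=\varphi(x)$ is equivalent to $f^{\star}(\varphi)=\varphi(x)-f(x)$, i.e.\ to
\[
\varphi(y)-f(y)\le\varphi(x)-f(x)\qquad\text{for every }y\in X.
\]
Rearranging and invoking additivity of $\varphi$, this reads $\varphi(y-x)\le f(y)-f(x)$ for all $y$. Since $X$ is a group, the element $h=y-x$ ranges over all of $X$ as $y$ does, so the condition becomes $f(x)+\varphi(h)\le f(x+h)$ for every $h\in X$, which is precisely the membership $\varphi\in\partial f(x)$. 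Reading this chain of equivalences in reverse gives the converse implication.

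I expect the only genuine care to be bookkeeping rather than mathematics. One must fix the arithmetic conventions for the symbol $\infty$ and for pairing the real values $\varphi(x)$ with the $Z$-values $f(x)$, the latter being legitimate through the order-completeness of $Z$. The single point worth flagging is the degenerate case $f(x)=\infty$: there the plain inequality still holds, but the equality statement should be read with $x\in\dom(f)$ (or with the paper's $\infty$-conventions made explicit), since otherwise the equality condition and $\partial f(x)$ can diverge. The substantive structural input is otherwise minimal — only that $X$ is a group, so that $h=y-x$ sweeps out all of $X$, and that $Z$ is an order-complete group, so that $f^{\star}(\varphi)$ is well defined and $f(x)$ may be moved across the inequality. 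In particular, no semidivisibility or core hypothesis is needed here, in contrast to the max formula.
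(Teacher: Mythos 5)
Your proof is correct and follows essentially the same route as the paper: the inequality by evaluating the supremum defining $f^{\star}(\varphi)$ at $y=x$, and the equality case by observing that $f(x)+f^{\star}(\varphi)=\varphi(x)$ is exactly the statement $\varphi(y)-f(y)\le\varphi(x)-f(x)$ for all $y$, which after the substitution $h=y-x$ is the definition of $\varphi\in\partial f(x)$. Your explicit handling of the $f(x)=\infty$ case and of the mixed $\R$/$Z$-valued arithmetic is a bit more careful than the paper's one-line argument, but it is the same proof.
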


\begin{proof}
By definition~\eqref{def:fenchel}, $\varphi(x) - f(x) \le f^{\star}(\varphi)$ which implies $f(x)+f^{\star}(\varphi) \ge \varphi(x)$. If $\varphi\in \partial f(x)$, then $f(x)+\varphi(y-x) \le f(y)$ and so $f(x)-\varphi(x) \le f(y)-\varphi(y)$. Taking the infimum over the right side gives $f(x)-\varphi(x) \le -f^{\star}(\varphi)$ which then gives $f(x) + f^{\star}(\varphi) = \varphi(x)$. Conversely, by the definition of $f^{\star}$, if $f(x)+f^{\star}(\varphi) = \varphi(x)$ then $\varphi(y-x) \le f(y)-f(x)$, and so $\varphi\in \partial f(x)$ as required.
\end{proof}

\begin{example}
If $X$ is a meet lattice then additive functions are identically 0, since for every $m\in \N$ we have 
\[f(x) = f(\overbrace{x\wedge \dots \wedge x}^{m\text{ times}}) = mf(x).\] 
Hence $X^*=\{0\}$ and Theorem~\ref{fenchel-young} simply gives $f(x) \ge \inf_{x\in X}f(x)$.\qede
\end{example}

For an additive map $T: X_1\to X_2$ define the \emph{adjoint} $T^*:X_2^*\to X_1^*$ in the usual way
\begin{align*}
(T^*x_2^*)(x_1) = x_2^*(Tx_1), ~~~ x_1 \in X_1, ~ x_2^*\in X_2^*.
\end{align*}

We are now in a position to state and prove the Fenchel duality theorem.

\begin{thm}[Weak and strong Fenchel duality]\label{thm fenchel}
Let $X_1,X_2, Z$, be groups, and $(Z,\le)$ an order complete group. Given $f: X_1\to Z\cup \{\infty\}$, $g:X_2\to Z\cup\{\infty\}$ and an additive map $T:X_1\to X_2$, define
\begin{align*}
& P = \inf_{x\in X_1}\big\{f(x)+g(Tx)\big\},
\\
& D  = \sup_{\varphi^*\in X_2^*}\big\{-f^{\star}(T^*\varphi)-g^{\star}(\varphi)\big\}.
\end{align*} 
Then $P \ge D$ (weak duality). In particular, if $P=-\infty$ then $D=-\infty$. If, in addition,  $X_1$ is semidivisible, $f$ and $g$ are convex and we assume
\[0 \in \mathrm{core}\big(\mathrm{dom}(g)-T\,\mathrm{dom}(f)\big),\]
then $P = D$ (strong duality) and $D$ is attained when finite.
\end{thm}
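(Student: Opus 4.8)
The plan is to establish the two halves independently: \emph{weak duality} $P\ge D$ is a direct consequence of the Fenchel--Young inequality and requires none of the structural hypotheses, while \emph{strong duality} is obtained by subdifferentiating the \emph{value function} of the primal problem at the origin via the max formula.

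For weak duality, fix $x\in X_1$ and $\varphi\in X_2^*$. Theorem~\ref{fenchel-young} applied to $f$ with the functional $T^*\varphi$ gives $f(x)\ge(T^*\varphi)(x)-f^{\star}(T^*\varphi)=\varphi(Tx)-f^{\star}(T^*\varphi)$, and applied to $g$ at the point $Tx$ with the functional $-\varphi$ it gives $g(Tx)\ge-\varphi(Tx)-g^{\star}(-\varphi)$. Adding these cancels the common term $\varphi(Tx)$ and leaves $f(x)+g(Tx)\ge-f^{\star}(T^*\varphi)-g^{\star}(-\varphi)$. Taking the infimum over $x$ and the supremum over $\varphi$ gives $P\ge D$; in particular $P=-\infty$ forces $D=-\infty$. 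The only bookkeeping here is the placement of the sign on the dual variable, which is routine.

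For strong duality, first note that the hypothesis $0\in\core\big(\dom(g)-T\,\dom(f)\big)$ already forces $0\in\dom h$, hence $P<+\infty$; if $P=-\infty$ we are done by weak duality, so assume $P$ finite. Introduce the value function
\[
h(u)=\inf_{x\in X_1}\big[f(x)+g(Tx+u)\big],\qquad u\in X_2,
\]
so that $h(0)=P$. By Proposition~\ref{prop inf conv}---this is where the semidivisibility of $X_1$ and convexity of $f,g$ enter---$h$ is convex with $\dom(h)=\dom(g)-T\,\dom(f)$, so the hypothesis reads exactly $0\in\core(\dom h)$. As in Proposition~\ref{prop derivative} one has $h_0(0)\le 0$, so the antisymmetry condition $h_0(0)+h_0(-0)\le 0$ required by the max formula holds. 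Invoking Theorem~\ref{thm max} for $h$ at $0$ produces $\varphi_0\in\partial h(0)$, i.e.\ $P+\varphi_0(u)\le h(u)$ for all $u\in X_2$.

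It then remains to decode $\varphi_0$ as an optimal dual point. Since $h(u)\le f(x)+g(Tx+u)$, substituting $u=v-Tx$ gives $P+\varphi_0(v)-(T^*\varphi_0)(x)\le f(x)+g(v)$ for all $x\in X_1$, $v\in X_2$; the right-hand side separates, so taking infima over $x$ and over $v$ independently yields $P\le-f^{\star}(-T^*\varphi_0)-g^{\star}(\varphi_0)$. Writing $\psi=-\varphi_0$ this is exactly $P\le-f^{\star}(T^*\psi)-g^{\star}(-\psi)\le D$, and combined with weak duality we conclude $P=D$, with the supremum attained at $\psi$. I expect the main obstacle to be the verification that the max formula genuinely applies to $h$ at $0$: that $h$ is proper and convex with $0$ in the core of its domain, that the antisymmetry hypothesis holds, and---most delicately---that the resulting subgradient is a bona fide element of the real-valued dual group $X_2^*$. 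This last point is where order-completeness of $Z$ (guaranteeing that the separated infima exist and the conjugates are well defined) and the semidivisibility and inductive-order hypotheses are doing the real work.
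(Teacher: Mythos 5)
Your proof is correct and takes essentially the same route as the paper's: weak duality from the Fenchel--Young inequality together with the cancellation $(T^*\varphi)(x)=\varphi(Tx)$, and strong duality by applying Theorem~\ref{thm max} to the value function $h$ at $0$ (whose convexity and domain come from Proposition~\ref{prop inf conv}) and then decoding the resulting subgradient as an optimal dual point. Your extra care with the degenerate cases $P=\pm\infty$ and with the sign of the dual variable only tidies up what the paper leaves implicit.
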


\begin{proof}
To prove weak duality, note that $P\ge D$ is equivalent to
\begin{align*}
\inf_{\substack{x\in X_1 \\ \varphi \in X_2^*}}\Big[f(x)+f^{\star}(T^*\varphi)+g(Tx)+g^{\star}(-\varphi)\Big] \ge 0.
\end{align*}
By Theorem~\ref{fenchel-young}, we have $f(x)+f^{\star}(T^*\varphi) \ge (T^*\varphi)(x)$ and $g(Tx)+g^{\star}(-\varphi) \ge -\varphi(Tx)$. 
Then by the definition of $T^*$ we have $(T^*\varphi)(x)-\varphi(Tx) = 0$. 

To prove strong duality, define $h:X_2 \to Z\cup \{\infty\}$,
\begin{align*}
h(u) = \inf_{x\in X_1}\big\{f(x)+g(Tx+u)\big\}.
\end{align*}
By Proposition~\ref{prop inf conv}, $h$ is convex and $\dom(h) = \dom(g)-T\,\dom(f)$ is a convex set. Since we assume that $0\in \core\big(\dom(g)-T\,\dom(f)\big)$, applying Theorem~\ref{thm max} for $h$ and $x_0=0$ implies that there exists $\varphi:X_2\to Z\cup\{\infty\}$ additive such that $\varphi(u) \le h(u)-h(0)$ (note that since we choose $x_0=0$ in Theorem~\ref{thm max}, the condition $h_{x_0}(x_0)+h_{x_0}(-x_0) \le 0$ holds, as $h_{x}(0)=0$ always). Hence,
\begin{align*}
h(0) & \le h(u) -\varphi(u) \le f(x)+g(Tx+u) -\varphi(u) 
\\ & = \big[f(x)-(T^*\varphi)(x)\big] + \big[g(Tx+u) - (- \varphi(Tx+u))\big].
\end{align*}
Taking the infimum over $x\in X_1$, $u\in X_2$ implies
\begin{align*}
h(0) \le -f^{\star}(T^*\varphi) - g^{\star}(-\varphi) \le D.
\end{align*}
Since $h(0) = P$, strong duality follows. Again the dual supremum is attained when finite.
\end{proof}

\begin{example}
If $X_2$ is a meet lattice, then $X_2^*=\{0\}$ and 
\begin{align*}
D = -f^{\star}(0)-g^{\star}(0) = \inf_{x\in X_1}f(x)+\inf_{x\in X_2}g(x)
\end{align*}
which is clearly smaller than $P$. \qede
\end{example}

\begin{remark}\label{fenchel sublin}
Assume that in Theorem~\ref{thm fenchel} we have $\N$-sublinear functions rather than convex functions. Then if we use Proposition~\ref{prop n sublin}, Theorem~\ref{thm fenchel} still holds even if we omit the subdivisibility assumption. 
\qede
\end{remark}

Next we discuss  applications of Theorem~\ref{thm fenchel}. One of the classical applications, is a representation for the subdifferential of a sum of convex functions. We show that such a result holds for groups as well.

\begin{thm}[Sum rule for subdifferentials]\label{thm sum rule}
Suppose  $f:X_1\to Z\cup\{\infty\}$, $g:X_2\to Z\cup\{\infty\}$, for $(Z,\le)$  an order complete group and $T:X_1\to X_2$  is additive. Then
\begin{align*}
\partial\big(f+g\circ T\big)(x_0) \supseteq \partial f(x_0) + T^*\partial g(x_0).
\end{align*}
If, in addition, $X_1$ is semidivisible,   $0 \in \mathrm{core}\big(\mathrm{dom}(g)-T\,\mathrm{dom}(f)\big)$,while  $f$ and $g$ are convex, then equality holds.
\end{thm}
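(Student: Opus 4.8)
The plan is to prove the inclusion $\supseteq$ directly from the definitions and to obtain the reverse inclusion, hence equality, from the strong Fenchel duality theorem (Theorem~\ref{thm fenchel}) applied to a perturbed pair of functions. For the inclusion I would take $a\in\partial f(x_0)$ and $b\in\partial g(Tx_0)$ and use the defining inequalities $f(x_0)+a(h)\le f(x_0+h)$ and $g(Tx_0)+b(k)\le g(Tx_0+k)$. Specialising the latter to $k=Th$, noting $g(Tx_0+Th)=(g\circ T)(x_0+h)$ and $b(Th)=(T^*b)(h)$, and adding the two inequalities yields
\[ (f+g\circ T)(x_0)+(a+T^*b)(h)\le (f+g\circ T)(x_0+h), \]
so $a+T^*b\in\partial(f+g\circ T)(x_0)$. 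This step needs only additivity of $T$.

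For equality I would fix $\varphi\in\partial(f+g\circ T)(x_0)$ and perturb, setting $\tilde f:=f-\varphi$. Since $\varphi$ is additive, $\tilde f$ is still convex with $\dom\tilde f=\dom f$, so semidivisibility of $X_1$ and the qualification $0\in\core\big(\dom g-T\,\dom f\big)$ are untouched. Because $\varphi$ is a subgradient, $x_0$ minimises $\tilde f+g\circ T$, so the primal value $P=\inf_{x}\big[\tilde f(x)+g(Tx)\big]$ is attained at $x_0$ and equals $(f+g\circ T)(x_0)-\varphi(x_0)$, which is finite. Invoking Theorem~\ref{thm fenchel} for $(\tilde f,g,T)$ gives strong duality with the dual attained: there is $\psi\in X_2^*$ with $P=-\tilde f^{\star}(T^*\psi)-g^{\star}(-\psi)$.

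It then remains to read off the two subgradients. The Fenchel--Young inequality (Theorem~\ref{fenchel-young}) gives $\tilde f(x_0)+\tilde f^{\star}(T^*\psi)\ge (T^*\psi)(x_0)$ and $g(Tx_0)+g^{\star}(-\psi)\ge -(T^*\psi)(x_0)$; summing these and using the attained duality shows the total equals $0$. Since $(Z,\le)$ is a partially ordered group, two elements that are each $\ge 0$ and sum to $0$ must both vanish, so both Fenchel--Young inequalities are in fact equalities. Their equality cases yield $T^*\psi\in\partial\tilde f(x_0)$ and $-\psi\in\partial g(Tx_0)$. As $\partial\tilde f(x_0)=\partial f(x_0)-\varphi$, the first gives $a:=\varphi+T^*\psi\in\partial f(x_0)$; putting $b:=-\psi\in\partial g(Tx_0)$ we get $T^*b=-T^*\psi$ and hence $a+T^*b=\varphi$, exhibiting $\varphi\in\partial f(x_0)+T^*\partial g(Tx_0)$.

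The main obstacle is the passage from strong duality to the splitting of $\varphi$: one must secure attainment of the dual optimum (the ``attained when finite'' clause of Theorem~\ref{thm fenchel}, valid because $P$ is finite) and then invoke antisymmetry of the order on $Z$ to force both Fenchel--Young inequalities to be tight simultaneously. The only other point requiring care is the routine but essential verification that subtracting the additive functional $\varphi$ preserves convexity, the domain, and hence the core qualification, since the entire argument is run on the perturbed problem $\tilde f+g\circ T$.
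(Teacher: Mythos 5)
Your proposal is correct and follows essentially the same route as the paper: perturb by the given subgradient $\varphi$ so that $x_0$ minimises $(f-\varphi)+g\circ T$, invoke strong Fenchel duality under the stated core condition to obtain an attained dual solution $\psi$, and then split $\varphi$ as $(\varphi+T^*\psi)+T^*(-\psi)$ with the two pieces lying in $\partial f(x_0)$ and $T^*\partial g(Tx_0)$. The only cosmetic difference is that you extract the two subgradient memberships from the equality case of the Fenchel--Young inequality together with antisymmetry of the order on $Z$, whereas the paper reads them off directly from the inequality $0\le(f-\varphi)(x_1)-T^*\psi(x_1)+g(x_2)+\psi(x_2)$ by specialising arguments and introducing an intermediate constant; the substance is identical.
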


\begin{proof}
The first inclusion follows immediately. To prove the equality case, let $\phi\in \partial\big(f+g\circ T\big)(x_0)$. Then the function $(f-\phi) + g\circ T$ is minimised at $x_0$. Assume without loss of generality that the minimum is 0. By the strong Fenchel duality result with $P=D=0$, there exists $\varphi\in X_2^*$ such that
\begin{align*}
0 = -(f-\phi)^{\star}(T^*\varphi)-g^{\star}(-\varphi) = -f^{\star}(T^*\varphi+\phi)-g^{\star}(-\varphi).
\end{align*}
Hence, for every $x_1\in X_1$ and $x_2\in X_2$, we have
\begin{align}\label{sep two vars}
0 \le (f-\phi)(x_1)-T^*\varphi(x_1)+g(x_2)+\varphi(x_2).
\end{align}
In particular, choosing $x_1=x_0$, we have for all $x_2\in X_2$,
\begin{align*}
-\varphi(x-Tx_0) \le (f-\phi)(x_0)+g(x_2) = g(x_2)-g(Tx_0),
\end{align*}
where in the last equality we used our assumption that $(f-\phi)(x_0)+g(Tx_0) = 0$. Thus, we have $-\varphi\in \partial g(Tx_0)$. Also, by~\eqref{sep two vars}, we have
\begin{align*}
\sup_{x_1\in X_1}\big(-g(Tx_1)-T^*\varphi(x_1)\big) \le \inf_{x_1\in X_1}\big((f-\phi)(x_1)-T^*\varphi(x_1)\big).
\end{align*}
Thus there exists $z_0 \in Z$ such that for all $x_1\in X_1$,
\[-g(Tx_1) \le (T^*\varphi)(x_1) +z_0 \le (f-\phi)(x_1)\]	
and equality holds when $x_1=x_0$. Hence $z_0=0$ and $T^*\varphi+\phi\in \partial f(x_0)$, which completes the proof of the theorem.
\end{proof}

Another application of Theorem~\ref{thm fenchel} is a Hahn-Banach theorem for groups.

\begin{thm}[Hahn-Banach theorem for groups]
Let $X$ be a group, $X'\subseteq X$ a subgroup, and $(Z,\le)$ an order complete group. Assume that $f:X\to Z$ is $\N$-sublinear and $h:X'\to Z$ is additive such that $h\le f$ on $X'$. Then there exists $\bar h:X\to Z$ additive such that $\bar h \le f$ and $\bar h = h$ on $X'$.
\end{thm}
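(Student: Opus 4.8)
The plan is to deduce the extension from the Fenchel duality theorem (Theorem~\ref{thm fenchel}), exactly as advertised in the text preceding the statement. I would apply that theorem with $X_1 = X'$, $X_2 = X$, and $T\colon X' \hookrightarrow X$ the inclusion, taking as data the two functions $A := -h\colon X' \to Z$ and $B := f\colon X \to Z$. Both are convex, and in fact $\N$-sublinear: $A = -h$ is additive, so both $A$ and $-A = h$ are $\N$-sublinear, while $B = f$ is $\N$-sublinear by hypothesis. Working with $\N$-sublinear data is the point, because by Remark~\ref{fenchel sublin} strong duality then holds \emph{without} any semidivisibility assumption on $X_1 = X'$; this is essential, since the present statement assumes nothing of that kind.

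Next I would pin down the primal value and check the constraint qualification. Since $h \le f$ on $X'$ and $h(0) = f(0) = 0$, the primal is
\[
P = \inf_{x' \in X'}\big\{A(x') + B(Tx')\big\} = \inf_{x' \in X'}\big\{f(x') - h(x')\big\} = 0,
\]
the infimum being attained at $x' = 0$. The core hypothesis of Theorem~\ref{thm fenchel} reads $0 \in \core\big(\dom(B) - T\,\dom(A)\big)$; here $\dom(B) = X$ and $T\,\dom(A) = X'$, so $\dom(B) - T\,\dom(A) = X - X' = X$ because $X$ is a group and $0 \in X'$. Since $\core(X) = X$ (in Definition~\ref{def cor of set} one may always take $n = 1$), the qualification holds automatically. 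Strong duality therefore yields $D = P = 0$, with the dual supremum attained at some additive $\varphi^{*}\colon X \to Z$.

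It then remains to read off the extension from $\varphi^{*}$, using the conjugate pairing in the form employed in the proof of Theorem~\ref{thm fenchel} (so the $B$-term of the dual is $B^{\star}(-\varphi)$). At the optimum both conjugate contributions vanish, each being nonnegative with negatives summing to $D = 0$:
\[
A^{\star}(T^{*}\varphi^{*}) = \sup_{x' \in X'}\big\{\varphi^{*}(x') + h(x')\big\} = 0, \qquad B^{\star}(-\varphi^{*}) = \sup_{x \in X}\big\{-\varphi^{*}(x) - f(x)\big\} = 0.
\]
The second equality gives $-\varphi^{*}(x) \le f(x)$ for all $x$, i.e. $-\varphi^{*} \le f$. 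The first gives $\varphi^{*}(x') \le -h(x')$ on $X'$; applying this to $-x' \in X'$ and using additivity of $\varphi^{*}$ and $h$ yields the reverse inequality, so $\varphi^{*} = -h$ on $X'$. Setting $\bar h := -\varphi^{*}$, the map $\bar h$ is additive, satisfies $\bar h \le f$ on all of $X$, and restricts to $h$ on $X'$, which is the claim.

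The main obstacle is entirely one of sign bookkeeping: one must set up the data so that the \emph{dual} optimizer, after negation, is an extension of $h$ lying \emph{below} $f$, not an extension of $-h$. The choice $A = -h$ (rather than $+h$) is forced in order to keep $P$ finite — with $A = +h$ the primal $\inf_{x'}\{h(x') + f(x')\}$ is typically $-\infty$ — and the compensating negation is absorbed by the $-\varphi$ in the $B$-conjugate. A more hands-on alternative avoids Fenchel duality altogether: form $p(x) = \inf_{y \in X'}\{f(x+y) - h(y)\}$, which is finite (bounded below by $-f(-x)$), subadditive, and equal to $h$ on $X'$; pass to its positively homogeneous regularization $po(x) = \inf_{m}\tfrac{1}{m}p(mx)$ to obtain an $\N$-sublinear $q \le f$ with $q = h$ on $X'$; then apply the max formula (Theorem~\ref{thm max}, valid for $\N$-sublinear maps without semidivisibility) to produce $a \in \partial q(0)$, so that $a \le q \le f$ and $a = h$ on $X'$ after the same subgroup argument. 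Finally, I would flag that the naive sandwich strategy — treating $h$ as a concave function equal to $-\infty$ off $X'$ and separating it from $f$ via Theorem~\ref{thm separate} — does \emph{not} work, because a subgroup such as $\Z \subseteq \R$ need not be convex, so that candidate concave function fails to be concave.
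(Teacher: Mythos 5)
Your proof is correct and follows the same strategy as the paper's: both deduce the extension from strong Fenchel duality (Theorem~\ref{thm fenchel}) applied to the pair $f$ and $-h$, using Remark~\ref{fenchel sublin} to drop the semidivisibility hypothesis, and both recover $\bar h$ from the dual optimizer by showing that the two conjugate terms vanish separately and that the resulting one-sided inequality on the subgroup $X'$ upgrades to an equality by additivity. The one genuine difference is how the subgroup constraint is encoded. The paper takes $X_1=X_2=X$, $T=\mathrm{id}$, and penalises with $g=-h+\iota_{X'}$; you instead take $X_1=X'$, $X_2=X$, $T$ the inclusion, and work with the everywhere-finite additive function $-h$ on $X'$. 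Your packaging is arguably the safer one: Remark~\ref{fenchel sublin} requires the data to be $\N$-sublinear, and $-h+\iota_{X'}$ can fail positive homogeneity in the sense of Definition~\ref{def sublin} when some $x\notin X'$ has $mx\in X'$ (then $g(mx)$ is finite while $mg(x)=\infty$), whereas $-h$ restricted to $X'$ is additive, hence genuinely $\N$-sublinear. Your verification that $P=0$, that the core condition holds trivially, and the final extraction $\bar h=-\varphi^{*}$ are all sound; the supplementary observations (the direct construction via $p(x)=\inf_{y\in X'}\{f(x+y)-h(y)\}$, and the warning that a subgroup need not be convex, so one cannot simply extend $h$ by $-\infty$ and invoke Theorem~\ref{thm separate}) are also correct.
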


\begin{proof}
Choose $X_1=X_2=X$ and let $T:X\to X$ be the identity map.  Choose $g:X'\to Z\cup\{\infty\}$ to be $g = -h+\iota_{X'}$, where
\[\iota_{X'}(x) = \begin{cases} 0 & x\in X', \\ \infty & x\notin X'.\end{cases}\]
Since $f:X\to Z$, $\dom(f)=X$. Also, $\dom(g) = X'$. Thus $0\in \mathrm{core}(\dom(f)-T\dom(g))$ and we can thus use Theorem~\ref{thm fenchel}. Note that by Remark~\ref{fenchel sublin} we do not need to assume subdivisibility as we are dealing with $\N$-sublinear functions. Now, by Theorem~\ref{thm fenchel}, we have
\begin{align}\label{bigger than zero}
\nonumber 0 & \le \inf_{x\in X}\big\{f(x)-h(x)+\iota_{X'}(x)\big\} 
\\ \nonumber & = \inf_{x\in X}\big\{f(x)+g(x)\big\} 
\\ & = \sup_{\varphi\in X^*}\big\{-f^{\star}(\varphi)-g^{\star}(-\varphi)\big\}.
\end{align}
Thus, there exists $\varphi\in X^*$ such that for all $x\in X'$, $f^{\star}(\varphi) \le \varphi(x) - h(x)$. Since $f$ is sublinear, $f(0)=0$ and so it follows that $f^{\star}(\varphi)\ge 0$ or in other words $h(x) \le \varphi(x)$, $x\in X'$. Since $X'$ is a subgroup and $\varphi$ is additive, we have $h(x) = \varphi(x)$ on $X'$ and $g^{\star}(-\varphi)=0$. Now~\eqref{bigger than zero} implies that $f^{\star}(\varphi) =0$, which implies that $\varphi(x) \le f(x)$ for all $x\in X$.
\end{proof}

\begin{remark}
If $X$ and $Z$ are groups and $f,g:X\to Z\cup\{\infty\}$ are additive with $g\le f$, then $f=g$. However, if $X$ is  only a semigroup, this is no longer always true. As a result, we cannot expect strong Hahn-Banach type theorems on arbitrary semigroups. \qede
\end{remark}

\begin{thm}[Sandwich theorem for groups]\label{thm sandwich}
Assume that $X_1$ is a semidivisible group, $X_2$ a group, and $(Z\cup\{\infty\},\le)$ a group with complete order. Let $f:X_1\to Z\cup\{\infty\}$, $-g:X_2\to Z\cup\{\infty\}$ be convex and $T:X_1\to X_2$ be additive, such that $g\circ T\le f$. Assume that $0\in \mathrm{core}(\mathrm{dom}(g)-T\,\mathrm{dom}(f))$. Then there exists an additive function $a:X\to Z$ such that $g\circ T \le a \le f$.
\end{thm}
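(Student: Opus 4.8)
The plan is to deduce the statement from strong Fenchel--Rockafellar duality (Theorem~\ref{thm fenchel}), in exactly the spirit of the sum rule and the group Hahn--Banach theorem proved just above. The first step is to repackage the hypotheses into a convex--convex problem. Since $-g$ is convex, set $G := -g : X_2 \to Z\cup\{\infty\}$ and consider the primal value
\[
P = \inf_{x\in X_1}\big\{f(x) + G(Tx)\big\} = \inf_{x\in X_1}\big\{f(x) - g(Tx)\big\}.
\]
The hypothesis $g\circ T \le f$ gives $P \ge 0$. Because $\dom(G) = \dom(-g) = \dom(g)$, the assumed condition $0 \in \core(\dom(g) - T\,\dom(f))$ is precisely the qualification $0\in\core(\dom(G)-T\,\dom(f))$ demanded by Theorem~\ref{thm fenchel}; together with the semidivisibility of $X_1$, the order completeness of $Z$, and the convexity of $f$ and $G$, all hypotheses of strong duality are in force. (That same qualification makes $\dom(f)\cap T^{-1}(\dom(g))$ nonempty, so $P<\infty$.)

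The second step is to read off the separating functional. Theorem~\ref{thm fenchel} gives $P = D$ with the dual supremum attained at some $\varphi_0 \in X_2^*$; in the Fenchel--Rockafellar form (the one actually used in the proof of Theorem~\ref{thm fenchel}) this says
\[
-f^{\star}(T^{*}\varphi_0) - G^{\star}(-\varphi_0) = P \ge 0.
\]
Write $\alpha = f^{\star}(T^{*}\varphi_0)$ and $\beta = G^{\star}(-\varphi_0)$, both finite since their sum is $-P$ and each is bounded below; thus $\alpha + \beta = -P \le 0$. Now apply the Fenchel--Young inequality (Theorem~\ref{fenchel-young}): from $f$ one gets $f(x) \ge (T^{*}\varphi_0)(x) - \alpha = \varphi_0(Tx) - \alpha$, and from $G$ one gets $G(Tx) + \beta \ge (-\varphi_0)(Tx)$, that is $g(Tx) \le \varphi_0(Tx) + \beta$. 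Taking $z_0 := \beta$ and $a := T^{*}\varphi_0 + z_0$, the inequality $\beta \le -\alpha$ yields
\[
g(Tx) \;\le\; \varphi_0(Tx) + \beta \;=\; a(x) \;\le\; \varphi_0(Tx) - \alpha \;\le\; f(x)
\]
for every $x\in X_1$, which is the desired sandwich. This is exactly the two-sided bracket that already appears inside the proof of the sum rule (Theorem~\ref{thm sum rule}).

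The main obstacle --- and the whole content of the theorem --- is that a single functional $\varphi_0$ must simultaneously furnish a lower datum for $g\circ T$ and an upper datum for $f$; this succeeds only because the duality gap vanishes, i.e. $\alpha+\beta\le 0$, and that is exactly where semidivisibility of $X_1$ and the core condition enter, via Theorem~\ref{thm fenchel} (whose own proof rests on the max formula, hence on $p$-semidivisibility). One honest technical point to record is that $a = T^{*}\varphi_0 + z_0$ is additive only up to the constant $z_0$, i.e. it is \emph{affine}, with additive part $T^{*}\varphi_0 \in X_1^{*}$; the strictly additive conclusion of the statement holds once $z_0=0$, which can be arranged under the normalisation $f(0)\ge 0\ge g(0)$. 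In particular, when $f$ and $-g$ are $\N$-sublinear this normalisation is automatic, and by Remark~\ref{fenchel sublin} the semidivisibility hypothesis may even be dropped.
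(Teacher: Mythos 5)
Your argument is correct and is essentially the paper's own proof: both apply strong Fenchel duality (Theorem~\ref{thm fenchel}) to the pair $f$, $-g$, extract a dual-optimal $\varphi_0\in X_2^*$, and read off the sandwich from the resulting inequality $(-g)^{\star}(-\varphi_0)\le -f^{\star}(T^*\varphi_0)$. Your closing caveat is in fact a point the paper glosses over --- its proof simply declares $T^*\varphi$ to be ``the required additive function'', whereas, as you observe, duality only yields $T^*\varphi_0$ up to a constant in $[\beta,-\alpha]$, so the honest conclusion is affine unless one can place $0$ in that interval (your normalisation $f(0)\ge 0\ge g(0)$ is necessary for an additive separator but not obviously sufficient to force $\beta\le 0\le-\alpha$).
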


\begin{proof}
Using Theorem~\ref{thm fenchel}, we have $P\le 0$ and so there exists $\varphi\in X_2^*$ such that $ (-g)^{\star}(-\varphi) \le -f^{\star}(T^*\varphi)$. This implies that
\begin{align}\label{ineq sup inf}
\sup_{x\in X_1}\big(-g(Tx)-T^*\varphi(x)\big) \le \inf_{x\in X_1}\big(f(x)-T^*\varphi(x)\big).
\end{align}
$T^*\varphi$ is the required additive function. If $P=-\infty$ in Theorem~\ref{thm fenchel}, then $P<-\alpha<0$ for every $\alpha>0$ and so inequality~\eqref{ineq sup inf} still holds. 
\end{proof}

\begin{remark}
By Proposition~\ref{prop n sublin},  Theorem~\ref{thm sandwich} holds if we replace convex functions by $\N$-sublinear, even if we omit the subdivisibility assumption. \qede
\end{remark}

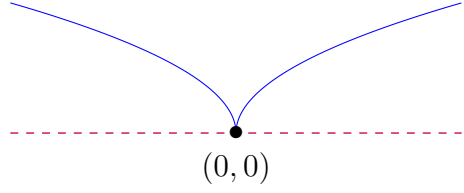
\begin{figure}[t]
\begin{center}
\begin{tikzpicture}
  \draw [draw=blue] plot[smooth, samples=1000, domain=-3:3] ({\x},{sqrt(abs(\x))});
  \draw[line width=0.5pt, dashed, scale=1,domain=-3:3,smooth,variable=\x, purple] plot ({\x},{0});
  \node at (0,0) {\textbullet};
  \node [below=3pt] at (0,0) {$(0,0)$};
\end{tikzpicture}
  \caption{Single minorant in $\R$}\label{fig:minorant}
\end{center}
\end{figure} 
 
\begin{remark}
Even for $X=\R$, the only additive minorant may be $a=0$. Consider  the subadditive (non-convex) function $f(x) = \sqrt{|x|}$. See Figure~\ref{fig:minorant}. \qede
\end{remark}

\section{Subadditive optimisation}\label{sec optimize}
 
Let $f,g_1,\dots,g_k:X\to [-\infty,\infty]$ and $b\in \R$. Define $v:\R^k\to[-\infty,\infty]$ by
\begin{align}\label{def v}
v(b) = v(b_1,\dots,b_k) = \inf\big\{f(x)~\big|~ x\in X, ~g_1(x)\le b_1,\dots,g_k(x) \le b_k\big\}.
\end{align}
$v$ is also known as the \emph{value function}. We have the following.
\begin{prop}[Subadditive and sublinear value functions]\label{prop:value}
~Assume that $X$ is a monoid and $f,g_1,\dots,g_k:X\to [-\infty,\infty]$ are subadditive. Then the function $v:\R^k\to [-\infty,\infty]$ defined by~\eqref{def v} is subadditive. If, in addition, $X$ is assumed to be $p$-semidivisible and $f,g_1,\dots,g_k$ satisfy $f(px)=pf(x)$, $g_i(px)=pg_i(x)$, $1 \le i \le k$ then $v$ satisfies $v(px)=pv(x)$. In particular, by Proposition~\ref{prop p sufficient sublin}, $v$ is convex.
\end{prop}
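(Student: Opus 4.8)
The plan is to leverage the additive structure of the constraint system in~\eqref{def v}, so that subadditivity and positive homogeneity of $v$ are inherited directly from the corresponding properties of $f$ and the $g_i$ by pushing them through the infimum.

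First I would prove subadditivity of $v$. The key observation is that feasibility is preserved under the monoid addition: if $x$ satisfies $g_i(x)\le b_i$ for all $i$ and $y$ satisfies $g_i(y)\le c_i$ for all $i$, then subadditivity of each $g_i$ gives $g_i(x+y)\le g_i(x)+g_i(y)\le b_i+c_i$, so that $x+y$ is feasible for the vector $b+c$. Combining this with subadditivity of $f$, namely $f(x+y)\le f(x)+f(y)$, the definition~\eqref{def v} yields $v(b+c)\le f(x)+f(y)$ for every such pair. Taking the infimum first over all $x$ feasible for $b$ and then over all $y$ feasible for $c$ produces $v(b+c)\le v(b)+v(c)$. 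The cases where a feasible set is empty (so $v$ equals $+\infty$ there) or where $v$ attains $-\infty$ are absorbed by the extended-real conventions adopted earlier (in particular $\infty-\infty=\infty$), and cause no difficulty.

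Next I would establish $v(pb)=pv(b)$ under the additional hypotheses, proving the two inequalities separately. For $v(pb)\le pv(b)$, I send a point $x$ feasible for $b$ to $px$: the homogeneity $g_i(px)=pg_i(x)\le pb_i$ shows $px$ is feasible for $pb$, while $f(px)=pf(x)$ gives $v(pb)\le pf(x)$; taking the infimum over $x$, which commutes with multiplication by $p>0$, yields the bound. For the reverse inequality $v(pb)\ge pv(b)$, this is exactly where $p$-semidivisibility enters: given any $z$ feasible for $pb$, the identity $pX=X$ lets me write $z=px$ for some $x\in X$; then $pg_i(x)=g_i(z)\le pb_i$ forces $g_i(x)\le b_i$ upon dividing by $p>0$ (noting $g_i(x)=+\infty$ is impossible for a feasible point), so $x$ is feasible for $b$ and $f(z)=pf(x)\ge pv(b)$. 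Taking the infimum over $z$ completes the equality.

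Finally, since $\R^k$ is a monoid under addition and I have shown that $v$ is subadditive and satisfies $v(pb)=pv(b)$ for the fixed prime $p$, Proposition~\ref{prop p sufficient sublin} applies verbatim (with domain monoid $\R^k$ and range $[-\infty,\infty]$) and gives that $v$ is convex. I expect the only delicate points to be the semidivisibility step in the reverse homogeneity inequality, where the solvability of $z=px$ is essential, together with the bookkeeping for the values $\pm\infty$; everything else is a routine transfer of the hypotheses on $f$ and the $g_i$ through the infimum.
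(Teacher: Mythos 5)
Your proof is correct and follows essentially the same route as the paper: subadditivity of $v$ is obtained by adding feasible points and pushing the subadditivity of $f$ and the $g_i$ through the infimum, and the identity $v(pb)=pv(b)$ comes from reparametrising the feasible set via $x=py$ using $pX=X$, after which Proposition~\ref{prop p sufficient sublin} gives convexity. The only difference is presentational: you split the homogeneity into two inequalities, whereas the paper writes it as a single chain of equalities that implicitly contains both directions.
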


\begin{proof}
Let $x_1,x_2\in X$ be such that $g_i(x_1)\le b_i$, $g(x_2)\le c_i$, $1\le i \le k$. Since $g_1,\dots,g_k$ are subadditive, $g_i(x_1+x_2)\le g_i(x_1)+g_i(x_2)\le b_i+c_i$, $1\le i \le k$. Thus, of $b=(b_1,\dots,b_k)$, $c=(c_1,\dots,c_k)$, then
\begin{align*}
v(b+c) \le f(x_1+x_2) \le f(x_1)+f(x_2),
\end{align*}
where we used the subadditivity of $f$. Taking the infimum over the right side, the first assertion follows. To prove the second assertion, we only need to prove positive homogeneity. Indeed, for every $x\in X$, since $X$ is $p$-semidivisible, there exists $y\in X$ satisfying $x=py$. As a result,
\begin{align*}
v(pb) & = \inf\big\{f(x)~\big| ~ x\in X, ~g_1,(x)\le pb_1,\dots,g_k(x)\le pb_k\big\}
\\ & = \inf\big\{f(py)~\big| ~ y\in X,~  g_1(py)\le p b_1,\dots,g_k(py)\le pb_k\big\}
\\ & = \inf\big\{pf(y)~\big| ~ y\in X,~  pg_1(y)\le p b_1,\dots,pg_k(y) \le pb_k\big\}
\\ & = p\,\inf\big\{f(y)~\big| ~ y\in X,~  g_1(y)\le b_1,\dots,g_k(y) \le b_k\big\}
\\ & = pv(b),
\end{align*}
and we are done.
\end{proof}

\begin{remark}
The result holds if the module is over a semidivisible semiring $R$ and $f$ and $g$ are subadditive functions. \qede
\end{remark}

In the sublinear case, we may now apply Theorem \ref{thm max} to the function $h$ of Proposition \ref{prop:value} to describe $h$ in terms of additive minorants.

\begin{example}Let $b\in \R$, and let
\begin{align*}
\inf\big\{-x ~ \big| ~ 2x\le b, ~x\in \Z\big\} = -\left\lceil\frac b 2 \right\rceil.
\end{align*}
Thus, in the nondivisible setting, even if $k=1$ and $f$ and $g_1$ are additive, $v$ need not be homogeneous. \qede
\end{example}

In general integer programming \cite{Wil97,AV95} adding the sub additive, but not $\N$-homogeneous,  \emph{ceiling} function $\lceil\,  \cdot\,  \rceil$  allows one to reconstruct integer value functions but the additive minorants do not suffice. This is discussed in~\cite{TW81, BJ82}. It is interesting to ask what class of groups allows an analogue of the ceiling?

We note also that methods that were originally developed to study linear programming results in vector spaces, such as the cutting-plane method~\cite{Kel60}, can also be used to study \emph{integer} linear programming problems. See also~\cite{AV95, LL02} and the survey~\cite{BV03} for more information on the cutting-planes method, and~\cite{BJ82, Gom58, GB60, LL02} for more information on integer programming.

\subsection{Lagrange multipliers in action}

Suppose now that we have an optimisation problem with $m$ constraints:
\begin{align*}
\inf\big\{f(x)~\big|~g_1(x) \le 0,\dots,g_k(x)\le 0\big\}.
\end{align*}
Let $g(x) = (g_1(x),\dots,g_k(x))\in \R^m$. Define the \emph{Lagrangian function} $L:X\times \R^k \to (-\infty,\infty]$ to be 
\begin{align*}
L(x,\lambda) = f(x)+\lambda\cdot g(x).
\end{align*}
Here, $\lambda \cdot g(x)$ is the standard inner product in $\R^k$. We say that $\bar\lambda \in \R^k$ is a \emph{Lagrange multiplier} if the Lagrangian function $L(\,\cdot \, , \bar\lambda)$ has the same infimum as $f$ on $X$.
We will now show that Lagrange multipliers can be used to compute the subdifferential of the maximum of convex function. In the vector space case, this fact has several different proofs. We chose this particular version to show the use of Lagrange multipliers in the group setting.

\begin{thm}\label{thm subdif max}
Let $X$ be a semidivisible group and $f_i:X\to (-\infty,\infty]$ be convex functions, where $i\in I$, $I$ being a finite index set. Let $f = \max_{1\le i \le k}f_i$. For $x_0\in \bigcap_{i\in I(x_0)}\mathrm{core}(\mathrm{dom}(f_i))$, where  $I(x_0) = \{ 1\le i \le k ~|~ f_i(x_0) = f(x_0)\}$. Then we have
\begin{align*}
\partial f(x_0) = \conv\Bigg(\bigcup_{i\in I(x_0)}\partial f_i(x_0)\Bigg).
\end{align*}
\end{thm}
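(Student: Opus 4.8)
The plan is to prove the two inclusions separately, dispatching $\supseteq$ routinely and concentrating the work on $\subseteq$, which I would obtain through the Lagrange multiplier machinery of this section. First I would record that $f=\max_i f_i$ is convex by Proposition~\ref{prop still conv}, since $(-\infty,\infty]$ carries a compatible lattice order. For $\supseteq$: if $a\in\partial f_i(x_0)$ for some $i\in I(x_0)$, then $f_i(x_0)+a(h)\le f_i(x_0+h)\le f(x_0+h)$, and because $f_i(x_0)=f(x_0)$ this yields $a\in\partial f(x_0)$; hence $\bigcup_{i\in I(x_0)}\partial f_i(x_0)\subseteq\partial f(x_0)$. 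As the range is $\R$, which satisfies the cancellation hypothesis of Proposition~\ref{prop subdif conv}, the set $\partial f(x_0)$ is convex in $\mathcal L(X,\R)=X^*$, so it contains the convex hull of that union.

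For the reverse inclusion, take $\phi\in\partial f(x_0)$. Replacing each $f_i$ by $f_i-\phi$ and then subtracting the constant $f(x_0)-\phi(x_0)$ — operations that keep each $f_i$ convex, preserve the active index set $I(x_0)$, and translate every $\partial f_i(x_0)$ and $\partial f(x_0)$ by $-\phi$ — I may assume $\phi=0$ and $f(x_0)=0$, so that $x_0$ is a global minimiser of $f=\max_i f_i$ and it suffices to show $0\in\conv\big(\bigcup_{i\in I(x_0)}\partial f_i(x_0)\big)$. I would then phrase the minimisation of $\max_i f_i$ as the constrained problem of minimising $t$ over $X\times\R$ subject to $f_i(x)-t\le 0$ for $1\le i\le k$; its optimal value is $f(x_0)=0$, attained at $(x_0,0)$, and its Lagrangian is $L\big((x,t),\lambda\big)=t\big(1-\sum_i\lambda_i\big)+\sum_i\lambda_i f_i(x)$ for $\lambda_i\ge 0$.

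The crux is the existence of a Lagrange multiplier $\bar\lambda$. I would extract it from strong duality: the value function $b\mapsto\inf_x\max_i\big(f_i(x)-b_i\big)$ on $\R^k$ is convex with value $0$ at $b=0$, and the hypothesis $x_0\in\bigcap_{i\in I(x_0)}\core(\dom(f_i))$ serves as the constraint qualification placing $0$ in the core of its domain, so that Theorem~\ref{thm max} applied to this value function (equivalently, strong Fenchel duality, Theorem~\ref{thm fenchel}) supplies $\bar\lambda\ge 0$. Finiteness of $\inf_t L$ forces $\sum_i\bar\lambda_i=1$, and since $\inf_x\sum_i\bar\lambda_i f_i(x)=0$ while $\sum_i\bar\lambda_i f_i(x_0)=\sum_{i\notin I(x_0)}\bar\lambda_i f_i(x_0)\le 0$ with each inactive value $f_i(x_0)<0$ strict, complementary slackness forces $\bar\lambda_i=0$ for every inactive $i$. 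This is the step I expect to be the main obstacle, both in verifying the core-based constraint qualification and in transferring the finite-dimensional multiplier argument faithfully into the group setting.

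Finally, the multiplier $\bar\lambda$, supported on $I(x_0)$ with $\sum_{i\in I(x_0)}\bar\lambda_i=1$, makes $x_0$ a minimiser of $\sum_{i\in I(x_0)}\bar\lambda_i f_i$, that is, $0\in\partial\big(\sum_{i\in I(x_0)}\bar\lambda_i f_i\big)(x_0)$. Applying the subdifferential sum rule, Theorem~\ref{thm sum rule} (iterated over the finitely many active indices via $X_1=X$ and the diagonal embedding $T\colon X\to X^{|I(x_0)|}$, the requisite core condition again following from $x_0\in\bigcap_{i\in I(x_0)}\core(\dom(f_i))$ together with the semidivisibility of $X$), I would split this as $0=\sum_{i\in I(x_0)}\bar\lambda_i a_i$ with $a_i\in\partial f_i(x_0)$. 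Since the $\bar\lambda_i\ge 0$ sum to $1$, this exhibits $0$ as a convex combination of elements of $\bigcup_{i\in I(x_0)}\partial f_i(x_0)$, giving $0\in\conv\big(\bigcup_{i\in I(x_0)}\partial f_i(x_0)\big)$ and completing the proof.
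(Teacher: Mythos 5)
Your proposal is correct and follows essentially the same route as the paper: the easy inclusion via Proposition~\ref{prop subdif conv}, then a value function attacked with the max formula (Theorem~\ref{thm max}) to produce a nonnegative multiplier $\bar\lambda$ with $\sum_i\bar\lambda_i=1$, followed by the sum rule (Theorem~\ref{thm sum rule}) to decompose $\partial\bigl(\sum_{i}\bar\lambda_i f_i\bigr)(x_0)$. The only cosmetic differences are that you normalise to $\phi=0$ at the outset rather than at the end, and you index the value function by all $k$ constraints and kill the inactive multipliers by complementary slackness, whereas the paper restricts to $I(x_0)$ from the start.
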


\begin{proof}
The inclusion $\supseteq$ follows immediately from the fact the subdifferential is convex (Proposition~\ref{prop subdif conv} with $Y=\R$). To prove the other inclusion, consider the constrained minimisation problem
\begin{align}\label{max as inf}
\inf\big\{ t ~ \big|~ t\in \R,~x\in X,~ f_1(x) \le t ,\dots,f_k(x) \le t\big\}.
\end{align}
Note that this infimum equals $\inf_{x\in X}f(x)$. Assume first that $0\in \partial f(x_0)$, which means that the infimum in~\eqref{max as inf} is attained at $x_0$. Define the following auxiliary value function $v:\R^{I(x_0)}\to [-\infty,\infty]$,
\begin{align*}
v(b) = \inf\big\{t ~\big|~ f_i(x)-t\le b_i, ~~i \in I(x_0)\big\}.
\end{align*}
We have $v(b) \ge f(x_0)-\max_{i\in I(x_0)}|b_i|>-\infty$. Also, since we assumed that 
\[x_0 \in\bigcap_{i\in I(x_0)}\mathrm{core}(\mathrm{dom}(f_i)),\] 
it follows that $0\in \mathrm{core}(\dom(v))$. By Proposition~\ref{prop:value}, $v$ is convex. Thus, by Theorem~\ref{thm max}, there exists $\bar \lambda\in \partial v(0)$ (again we are allowed to use the max formula because we are at $x_0=0$). We note also that if $b\in \R_+^{I(x_0)}$ then we also have $v(b) \le f(x_0)$ (infimum over a larger set) and also $v(0) = f(x_0)$. Thus, we have
\begin{align*}
f(x_0) = v(0) \le v(b)+\bar \lambda\cdot b \le f(x_0)+\bar\lambda\cdot b,
\end{align*}
which means that $\bar\lambda \in \R_+^{I(x_0)}$. Hence, 
\begin{align*}
t & \ge v((f_i(x)-t)_{i\in I(x_0)}) 
\\  & \ge v(0) - \bar\lambda\cdot (f_i(x)-t)_{i\in I(x_0)}  
\\ & = f(x_0) -\bar\lambda\cdot (f_i(x)-t)_{i\in I(x_0)},
\end{align*}
and so
\begin{align*}
t+\bar\lambda\cdot (f_i(x)-t)_{i\in I(x_0)} \ge f(x_0), 
\end{align*} 
which means that $\bar\lambda$ is a minimiser for the Lagrangian function. In other words, we can find $\bar\lambda\in \R^{I(x_0)}$ that minimises
\begin{align}\label{expression min}
t+\sum_{i\in I(x_0)}\lambda_i(f_i(x)-t) = t\left(1-\sum_{i\in I(x_0)}\lambda_i\right)+\sum_{i\in I(x_0)}\lambda_if_i(x).
\end{align} 
We must have $\sum_{i\in I(x_0)}\bar\lambda_i = 1$. If not, then we can choose $t$ that would make~\eqref{expression min} go to $-\infty$. Thus, we have 
\begin{align*}
\sum_{i\in I(x_0)}\bar\lambda_if_i(x_0) \le \sum_{i\in I(x_0)}\bar\lambda_if_i(x),
\end{align*}
and so $0\in \partial \left(\sum_{i\in I(x_0)}\bar\lambda_if_i\right)(x_0)$. If, in general, we have that $\phi \in \partial f(x_0)$, then $0\in \partial (f-\phi)(x_0)$ and then we repeat the same argument to conclude that $\phi \in \partial \left(\sum_{i\in I(x_0)}\bar\lambda_if_i\right)(x_0)$. Altogether, we get 
\begin{align*}
\partial f(x_0) \subseteq \bigcup\left\{\partial\Big(\sum_{i\in I(x_0)}\lambda_if_i\Big)(x_0) ~ \Bigg|~ \lambda_i\ge 0, \sum_{i\in I(x_0)}\lambda_i=1\right\}.
\end{align*}
Now, Theorem~\ref{thm sum rule} implies that the right side is equal to
\begin{align*}
\conv\Bigg(\bigcup_{i\in I(x_0)}\partial f_i(x_0)\Bigg),
\end{align*}
and so we have
\begin{align*}
\partial f(x_0)\subseteq \conv\Bigg(\bigcup_{i\in I(x_0)}\partial f_i(x_0)\Bigg),
\end{align*}
which proves the other inclusion and concludes the proof.
\end{proof}

\begin{remark}
Combining Theorem~\ref{thm subdif max} with Proposition~\ref{prop still conv} allows us to consider subadditive optimisation problems with finitely many constraints.
\end{remark}

 \section{Conclusion}\label{sec conc}
 This paper grew out of a lecture that the first author gave in 1983 and then put aside until 2015 when the second author joined him in recreating and extending the original results. One original intention was to better understand the difficulty of integer programming as  that of programming over a non-divisible group. See also~\cite{BEEFST14, FGL05}.  In so doing we have uncovered many interesting connections but as of now made little progress directly for integer programming.
 
Surely there are many other classical results for which one can find elegant and even useful generalisations. Hopefully this paper will serve as an invitation to others to join the  pursuit.


\end{document}